\documentclass[12pt,a4paper]{amsart}

\usepackage[T1]{fontenc}
\usepackage[utf8]{inputenc}
\usepackage{geometry}
\usepackage{amsmath,amssymb,amsthm}
\usepackage{cite}
\usepackage{footmisc}

\numberwithin{equation}{section}

\makeatletter
\renewenvironment{proof}[1][Proof]{%
  \par\pushQED{\qed}%
  \normalfont\topsep6\p@\@plus6\p@\relax
  \trivlist
  \item[\hskip\labelsep\bfseries #1:]\ignorespaces
}{%
  \popQED\endtrivlist\@endpefalse
}
\makeatother

\theoremstyle{plain}
  \newtheorem{Thm}{Theorem}[section]
\newtheorem{lem}[Thm]{Lemma}
 \newtheorem{prop}[Thm]{Proposition}
\newtheorem{Cor}[Thm]{Corollary}
\newtheorem{conj}[Thm]{Conjecture}
\theoremstyle{definition}
\newtheorem{Def}[Thm]{Definition}

\newtheorem{rem}[Thm]{Remark}

\begin{document}

\title{The derived set of multiple star polylogarithms}

\author{Jiangtao Li}

\email{lijiangtao@csu.edu.cn}
\address{Jiangtao Li \\ School of Mathematics and Statistics, HNP-LAMA, Central South University, Hunan Province, China}

\begin{abstract}
The derived set of multiple zeta-star values is the half-line $[1,+\infty)$.  In this paper we study the corresponding two-dimensional problem for multiple star polylogarithms on the unit circle.  We first prove that every shifted multiple star polylogarithm is the generating function of a completely monotone sequence and hence admits a normalized Hausdorff--Stieltjes representation.  Both the shifted and the unshifted functions, as well as their infinite-depth limits, are shown to be univalent on the half-plane $\mathrm{Re}\,z<1$.

For finite indices, the representing densities satisfy a strict monotone likelihood-ratio order with respect to the reverse lexicographic order.  Together with closure properties of the Hausdorff--Stieltjes class and a limiting quotient argument, this shows that, along the upper semicircle, the argument of each translated curve increases strictly while its modulus decreases strictly; the corresponding lower-semicircle statement follows by conjugation.  The same idea gives strict separation of the curves attached to different indices.  We then establish a one-to-one correspondence between a natural set of pairs consisting of a point of the unit circle and an infinite index, and the closed half-plane $\mathrm{Re}\,w\geq \frac12$ with the point $1$ removed.  Under a binary coordinate on the index set, this correspondence is a homeomorphism.

As an application, a subclass of shifted cyclotomic multiple zeta-star values of all levels form a countable dense subset of the open half-plane $\mathrm{Re}\,w>\frac12$.  Consequently, the derived set, and every higher derived set, of this cyclotomic family is the closed half-plane $\mathrm{Re}\,w\geq\frac12$.
\end{abstract}

\let\thefootnote\relax\footnotetext{
2020 $\mathnormal{Mathematics} \;\mathnormal{Subject}\;\mathnormal{Classification}$: 11M32, 11G55, 30C45.\\
$\mathnormal{Keywords:}$ Multiple zeta-star values, multiple polylogarithms, Hausdorff moment sequences, Stieltjes functions, univalent functions.\\
}

\maketitle

\section{Introduction}\label{int}
Multiple zeta values and multiple zeta-star values are defined respectively by
\[
\zeta(k_1,\ldots,k_r)=\sum_{n_1>\cdots>n_r\geq 1}
\frac{1}{n_1^{k_1}\cdots n_r^{k_r}},
\]
\[
\zeta^{\star}(k_1,\ldots,k_r)=\sum_{n_1\geq\cdots\geq n_r\geq 1}
\frac{1}{n_1^{k_1}\cdots n_r^{k_r}},
\qquad k_1\geq 2,\quad k_2,\ldots,k_r\geq 1.
\]
Their weight and depth are $k_1+\cdots+k_r$ and $r$, respectively.  These values occur naturally in the study of iterated integrals, multiple series, mixed Tate motives, algebraic $K$-theory; see, for example, \cite{gonc,chen,DG,brow,ganl,gon1,gon2,ikz,zag}.   They also arise in perturbative quantum field theory and in the evaluation of Feynman integrals; see \cite{bro1,duhr,pan}.
 In addition to their algebraic relations, it is natural to ask for the order, topology, and metric geometry of the sets formed by these special values.

Denote by $\mathcal{Z}^{\star}$ the set of multiple zeta-star values.  In \cite{lit}, the author obtained a natural total order on $\mathcal{Z}^{\star}$ from multiple integral representations. Denote by $\mathbb{Z}^+$ the set of positive integers. Define
\[
\mathcal{T}=\bigg{\{}(k_1,\cdots, k_r,\ldots)\in (\mathbb{Z}^+)^{\infty}\,\bigg{|}\,
k_1\geq 2, (k_1,\cdots,k_r,\cdots)\neq (2,\{1\}^{\infty})\bigg{\}}.
\]
Then the map
\[
\eta:\mathcal{T}\rightarrow(1,+\infty),
\]
\[
(k_1,\cdots,k_r,\ldots)\mapsto
\lim_{r\rightarrow+\infty}\zeta^{\star}(k_1,\ldots,k_r)
\]
is bijective.  This is Theorem 1.3 of \cite{lit}. We call the map $\eta$ zeta-star correspondence. An equivalent correspondence was found independently by Hirose, Murahara and Onozuka in \cite{hmo}.  It implies that $\mathcal{Z}^{\star}$ is dense in $(1,+\infty)$ and that its derived set is $[1,+\infty)$.  The result may be compared with continued fractions: an infinite symbolic index provides a coordinate for an ordinary real interval.  It has since led to rational deformations, Diophantine approximation and normalized approximation function,
restricted sumsets and product sets, finite correspondences, and complex
analytic variations
\cite{LiRational2025,Li2025,LiYang2026,LiDivided2026,Li2026}. Kamano \cite{Kamano2026} investigated the order structure of  a class of multi- polylogarithm functions.

In the present paper this one-dimensional bijection is used only when both unit-circle parameters equal $1$. For $|z|<1$, the multiple polylogarithm and the multiple star polylogarithm are
\[
\mathrm{Li}_{k_1,\ldots,k_r}(z)=
\sum_{n_1>\cdots>n_r\geq 1}
\frac{z^{n_1}}{n_1^{k_1}\cdots n_r^{k_r}},
\]
\[
\mathrm{Li}^{\star}_{k_1,\ldots,k_r}(z)=
\sum_{n_1\geq\cdots\geq n_r\geq 1}
\frac{z^{n_1}}{n_1^{k_1}\cdots n_r^{k_r}}.
\]
If $k_1\geq 2$, these series are absolutely convergent on $|z|=1$.  If $k_1=1$ and $|z|=1$, $z\neq 1$, their boundary values are defined by partial sums with respect to the outer variable $n_1$. Section \ref{sec:cm} shows that these sums agree with the analytic continuations.  Denote by 
\[
\widetilde{\mathrm{Li}}^{\star}_{k_1,\ldots,k_r}(z)
=\frac{\mathrm{Li}^{\star}_{k_1,\ldots,k_r}(z)}{z}.
\]
We call it the shifted multiple star polylogarithm.

Multiple polylogarithms (MPLs) constitute a broad family of multivalued transcendental functions that simultaneously generalize the classical polylogarithm \(\operatorname{Li}_n(z)\) and multiple zeta values. Although they may be introduced through nested power series, their most flexible formulation is in terms of Chen iterated integrals of logarithmic differential forms on punctured projective lines. This representation makes their analytic continuation, differential equations, monodromy, and algebraic structure particularly transparent \cite{chen,gon1}. MPLs are commonly organized by their weight and depth and satisfy rich systems of functional identities. In particular, the iterated-integral representation gives rise to shuffle relations, whereas the nested-series representation produces stuffle, or quasi-shuffle, relations; the compatibility of these two algebraic structures plays a central role in the study of multiple zeta values and their generalizations \cite{fur,gon1,gon2}.  The p-adic iterated integration of MPLs is fundamental to the theory of p-adic multiple zeta values \cite{col,fur}.  Goncharov’s Hopf-algebraic and motivic formulation relates MPLs to periods of mixed Tate structures, motivic fundamental groups, and Galois actions, while Brown’s results demonstrate the effectiveness of this framework in describing the structure of motivic multiple zeta values \cite{brow,gon1,gon2}. Beyond arithmetic geometry and number theory, MPLs have become an essential function space in perturbative quantum field theory, since broad classes of multiloop Feynman integrals and scattering amplitudes can be expressed, simplified, and evaluated in terms of MPLs, their symbols, coproducts, and associated hyperlogarithmic integration algorithms \cite{bro1,duhr,pan}. These connections motivate continuing research on functional identities, bases and algebraic independence, special values, single-valued and motivic versions, analytic continuation, and efficient symbolic and numerical evaluation.

The passage from the real interval to the unit circle introduces a geometric function-theoretic problem.  The coefficients of a shifted multiple star polylogarithm form a Hausdorff moment sequence.  Thus the function belongs to the normalized Hausdorff--Stieltjes class
\[
h(z)=\int_{[0,1]}\frac{d\mu(t)}{1-tz},
\qquad \mu([0,1])=1.
\]  This connects the subject with completely monotone sequences, Pick functions, and univalent functions; see \cite{hau,wid,wir,Donoghue1974,GK}.  A central point of the present paper is that the representing measures are not merely ordered: their densities satisfy a strict monotone likelihood-ratio order.  This strictness is what prevents two different index curves from intersection on the unit circle.

Let
\[
\mathcal{D}=\{z\in\mathbb{C}\,|\,\mathrm{Re}\,z<1\},
\qquad
\Lambda=\mathbb{C}\setminus[1,+\infty).
\]
Our first main result is the following.

\begin{Thm}\label{cm}
Let $k_1,\ldots,k_r\geq 1$.

$(i)$ The shifted multiple star polylogarithm
\[
\widetilde{\mathrm{Li}}^{\star}_{k_1,\ldots,k_r}(z)
=\sum_{n=0}^{+\infty}A_{k_1,\ldots,k_r}(n)z^n
\]
is the generating function of a completely monotone sequence with $A_{k_1,\ldots,k_r}(0)=1$.  It admits a Hausdorff--Stieltjes representation on $\Lambda$.

$(ii)$ The multiple star polylogarithm $\mathrm{Li}^{\star}_{k_1,\ldots,k_r}$ extends holomorphically to $\Lambda$ and is univalent on $\mathcal{D}$.

$(iii)$ The shifted multiple star polylogarithm $\widetilde{\mathrm{Li}}^{\star}_{k_1,\ldots,k_r}$ extends holomorphically to $\Lambda$ and is univalent on $\mathcal{D}$.

$(iv)$ For every infinite index ${\bf k}=(k_1,k_2,\ldots)\in(\mathbb{Z}^{+})^{\infty}$, the limits
\[
\mathrm{Li}^{\star}_{\bf k}(z)
=\lim_{r\rightarrow+\infty}\mathrm{Li}^{\star}_{k_1,\ldots,k_r}(z),
\qquad
\widetilde{\mathrm{Li}}^{\star}_{\bf k}(z)
=\lim_{r\rightarrow+\infty}\widetilde{\mathrm{Li}}^{\star}_{k_1,\ldots,k_r}(z)
\]
exist locally uniformly on $\mathcal{D}$.  Both limit functions are univalent on $\mathcal{D}$, and the shifted limit has a Hausdorff--Stieltjes representation on $\Lambda$.
\end{Thm}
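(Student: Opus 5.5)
The plan is to get all four parts from a single moment representation of the coefficients, and then use general facts about the normalized Hausdorff--Stieltjes class.

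For (i), write $\mathrm{Li}^{\star}_{k_1,\ldots,k_r}(z)=\sum_{m\geq1}c_r(m)z^m$ with
\[
c_r(m)=m^{-k_1}\sum_{m\geq n_2\geq\cdots\geq n_r\geq1}\prod_{j\geq2}n_j^{-k_j}.
\]
Then $A(n)=c_r(n+1)$, and clearly $A(0)=c_r(1)=1$. I will prove by induction on $r$ that $n\mapsto c_r(n+1)$ is a Hausdorff moment sequence. The base case is depth one:
\[
(n+1)^{-k}=\int_0^1 t^n\,\frac{(-\log t)^{k-1}}{(k-1)!}\,dt .
\]
The induction step uses the recursion
\[
c_r(m)=m^{-(k_1-1)}\cdot\frac1m\sum_{j=1}^m c'_{r-1}(j),
\]
where $c'_{r-1}$ is the coefficient sequence for $(k_2,\ldots,k_r)$. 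It then needs two closure facts.
\begin{itemize}
\item[(a)] Cesàro means preserve moment sequences. If $a(n)=\int t^n\,d\mu$, then
\[
\frac1{n+1}\sum_{j=0}^n a(j)=\int_0^1\!\!\int_0^1 \bigl(1-s+st\bigr)^n\,ds\,d\mu(t),
\]
since $\int_0^1(1-s+st)^n\,ds=\frac{1-t^{n+1}}{(n+1)(1-t)}$. This is again a moment sequence, the image measure under $(s,t)\mapsto 1-s+st\in[0,1]$.
\item[(b)] Products of moment sequences are moment sequences, via the multiplicative convolution of the two measures.
\end{itemize}
Summing the geometric series under the integral gives $\widetilde{\mathrm{Li}}^{\star}(z)=\int_{[0,1]}d\mu(t)/(1-tz)$ with $\mu([0,1])=1$. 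The right side is holomorphic on $\Lambda$, which also gives the continuations in (ii) and (iii). Since the measure has no atom at $1$ (it has a density), a dominated-convergence argument in $n_1$ identifies the boundary partial sums with the continuation.

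For univalence of $h(z)=\int d\mu/(1-tz)$ on $\mathcal D$, first use the Pick property:
\[
\mathrm{Im}\,h(z)=\mathrm{Im}\,z\int\frac{t\,d\mu}{|1-tz|^2}.
\]
So $h$ preserves the upper and lower half of $\mathcal D$, and is strictly increasing and real on $(-\infty,1)$. Hence $h(z)=h(w)$ forces $z,w$ to lie in the same open half, by symmetry the upper one $\mathcal D^+$. On $\mathcal D^+$ I would apply the argument principle (Darboux's boundary criterion) on the boundary of $\mathcal D^+$. The segment $(-\infty,1)$ goes monotonically onto an interval of the real axis. The vertical ray $1+iy$, $y>0$, must be shown to map to a simple arc in the upper half-plane that meets that interval only at the endpoint $h(1^-)$. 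At infinity $h\to\mu(\{0\})$, which is approached from the correct side. For the ray I would study $\arg(h(1+iy)-h(1^-))$, or $\mathrm{Re}$ and $\mathrm{Im}$ of $h(1+iy)$ along it, and show injectivity from the explicit integrand $1/(1-t-ity)$.

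The main obstacle is exactly this boundary-curve monotonicity. If the direct approach fails, I would try proving $\mathrm{Re}\bigl[(1-\xi)\,h'(\xi)/(h(\xi)-c)\bigr]$ has a fixed sign, which would give starlikeness with respect to a suitable boundary point $c$. The same scheme handles $\mathrm{Li}^{\star}=z\,h(z)$ in (ii): its imaginary part still has the sign of $\mathrm{Im}\,z$ on $\mathcal D$, and $z\,h(z)$ is increasing on the real axis, because $t/(1-tx)^2>0$ and $d\mu$ is positive.

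For (iv), each extra factor in the depth recursion is a Cesàro mean of terms bounded by $1$, so the coefficients $c_r(m)$ increase in $r$ and stay bounded by the $\zeta^\star$-type bounds. More robustly, the measures $\mu_r$ are probability measures on $[0,1]$ whose moments converge for each $n$ (monotone and bounded). Hence $\mu_r$ converges weakly to a probability measure $\mu_\infty$, and $\int d\mu_r/(1-tz)\to\int d\mu_\infty/(1-tz)$ locally uniformly on $\Lambda\supset\mathcal D$. The unshifted limit is $z$ times this. The limits are nonconstant because $A(0)=1$ and $A(1)>0$. Hurwitz's theorem then gives univalence on $\mathcal D$; alternatively, $\mu_\infty$ lies in the same class, so the argument above applies directly.
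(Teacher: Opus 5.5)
\textbf{Verdict: genuine gap at univalence.} Your part (i) is correct and matches the paper's argument. The paper's operators $\mathsf{M}$ and $\mathsf{C}$ acting on densities are exactly your ``multiply by a depth-one moment sequence'' and ``Ces\`aro mean'' steps. Your weak-convergence treatment of the limits in (iv) is also sound; it is essentially the paper's weak-convergence lemma.

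The gap is univalence, which is the real content of (ii), (iii) and the univalence claim in (iv). You set up a Darboux/argument-principle scheme on $\mathcal{D}^+=\{\mathrm{Re}\,z<1,\ \mathrm{Im}\,z>0\}$, but you leave its decisive step unproved. You call it ``the main obstacle'' and offer only a vague starlikeness fallback, so nothing in the proposal actually establishes injectivity. The Hurwitz step in (iv) inherits the gap, since it presupposes finite-depth univalence.

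\textbf{On your own route.} The ray step is in fact easy. Since $\mathrm{Re}\,h(1+iy)=\int\frac{(1-t)\,d\mu(t)}{(1-t)^2+t^2y^2}$ is strictly decreasing in $y>0$ while $\mathrm{Im}\,h(1+iy)>0$, the image of the ray is a simple arc. You would still need a careful argument principle on the unbounded quadrant, where $h$ can be infinite at the corner; for $\mathbf{k}=(1)$, $h(z)=-\log(1-z)/z$.

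For $zh(z)$, your claim that ``the same scheme handles'' it is not right as stated. For $\mathbf{k}=(1)$, $zh(z)=-\log(1-z)$ sends both $z=1$ and $z=\infty$ to $\infty$, and maps $\mathcal{D}^+$ onto the strip $0<\mathrm{Im}\,w<\pi/2$. So the boundary image is not a Jordan curve on the sphere, and Darboux's criterion does not apply. Also, $(xh(x))'=\int d\mu(t)/(1-tx)^2$, not $\int t\,d\mu(t)/(1-tx)^2$.

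\textbf{The missing idea: close-to-convexity.} This avoids boundary analysis entirely. The function $g(z)=-\log(1-z)$ maps $\mathcal{D}$ conformally onto the convex strip $|\mathrm{Im}\,w|<\pi/2$. For $z=a+ib\in\mathcal{D}$ and $0<t\le 1$,
\[
\mathrm{Re}\,\frac{t(1-z)}{(1-tz)^2}=\frac{t\big((1-a)(1-ta)^2+tb^2[2-(1+a)t]\big)}{|1-tz|^4}>0,
\]
because $2-(1+a)t>0$. Integrating gives $\mathrm{Re}\big((1-z)h'(z)\big)>0$ whenever $\int t\,d\mu>0$. Noshiro--Warschawski applied to $h\circ g^{-1}$ on the strip then gives univalence of $h$ on $\mathcal{D}$. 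This holds for every probability measure with positive first moment, so it covers the infinite-depth limits directly, with no Hurwitz step.

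The paper obtains (ii) from Wirths' theorem instead. However, the same identity without the factor $t$ shows $\mathrm{Re}\big((1-z)(zh)'(z)\big)>0$ as well, so $zh$ is univalent on $\mathcal{D}$ by the same argument.
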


Put
\[
S^1=\{z\in\mathbb{C}\,|\,|z|=1\},
\qquad
\widehat{\mathcal{T}}=(\mathbb{Z}^{+})^{\infty}.
\]
Define
\[
\widetilde{\mathcal{ST}}=
\Big\{(z,{\bf k})\in S^1\times\widehat{\mathcal{T}}\,\Big|\,
\text{if }z=1,\text{ then }k_1\geq 2
\text{ and }{\bf k}\neq(2,1,1,\ldots)\Big\}.
\]
Thus, when $z=1$, precisely the indices for which the infinite zeta-star limit is finite are retained.  Let
\[
\mathcal{H}=\left\{w\in\mathbb{C}\,\bigg|\,\mathrm{Re}\,w\geq\frac12\right\},
\qquad
\mathcal{H}^{o}=\left\{w\in\mathbb{C}\,\bigg|\,\mathrm{Re}\,w>\frac12\right\}.
\]
The right half complex plane \(\mathcal{H}\)  plays a fundamental role in the theory of Riemann zeta function. The principal result is a two-dimensional extension of the zeta-star correspondence.

\begin{Thm}\label{one}
The map
\[
\eta_{\mathcal{S}}:\widetilde{\mathcal{ST}}\longrightarrow\mathcal{H}\setminus\{1\},
\qquad
(z,{\bf k})\longmapsto
\widetilde{\mathrm{Li}}^{\star}_{\bf k}(z)
\]
is bijective.
Moreover, put
\[
\beta({\bf k})=\frac{1}{2^{k_1}}+\cdots+\frac{1}{2^{k_1+\cdots +k_r}}+\cdots.
\]
Equip $\widehat{\mathcal{T}}$ with the topology transported from $(0,1]$ by $\beta$, and equip $\widetilde{\mathcal{ST}}$ with the corresponding subspace topology of $S^1\times\widehat{\mathcal{T}}$.  Then
\[
\tau_{\mathcal{S}}(z,{\bf k})=z\beta({\bf k})
\]
identifies $\widetilde{\mathcal{ST}}$ with
\[
\overline{\mathcal{U}}\setminus\left(\{0\}\cup\left[1/2,1\right]\right),
\]
and $\eta_{\mathcal{S}}\circ\tau_{\mathcal{S}}^{-1}$ is a homeomorphism onto $\mathcal{H}\setminus\{1\}$.
Here 
\[
\mathcal{U}=\{u\in\mathbb{C}\,|\,|u|< 1\},\quad \overline{\mathcal{U}}=\{u\in\mathbb{C}\,|\,|u|\leq 1\},
\]
\end{Thm}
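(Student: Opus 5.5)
We plan to obtain Theorem \ref{one} by a continuity and monotonicity argument along each half-circle, using Theorem \ref{cm} together with the one-dimensional zeta-star correspondence $\eta$ of \cite{lit}. We first record the boundary behaviour. For $z=-1$ every index gives a real value. For finite depth, $\widetilde{\mathrm{Li}}^{\star}_{k_1,\ldots,k_r}(-1)=\int d\mu(t)/(1+t)$, and this tends to $1/2$ exactly when $\mu$ concentrates at $t=1$. For infinite indices, Theorem \ref{cm}(iv) gives a normalized Hausdorff--Stieltjes representation $\widetilde{\mathrm{Li}}^{\star}_{\bf k}(z)=\int_{[0,1]}d\mu_{\bf k}(t)/(1-tz)$. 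Since $\mathrm{Re}\,\frac{1}{1-tz}\geq\frac12$ for $|z|=1$, $t\in[0,1]$, with equality only when $t=1$, every value lies in $\mathcal{H}$. The value $1$ is excluded because it would force $\mu_{\bf k}=\delta_0$, which is impossible since $A_{\bf k}(1)>0$ for every ${\bf k}$. Moreover, $\mathrm{Re}\,w=\frac12$ is attained only when $\mu_{\bf k}$ has mass at $1$. We will need to compare this with the pattern $\tau_{\mathcal S}$ predicts: the boundary line should correspond to $|u|=1$, that is, to $\beta({\bf k})=1$, i.e.\ ${\bf k}=(1,1,1,\ldots)$. For this index, $\widetilde{\mathrm{Li}}^{\star}_{(1,1,\ldots)}(z)=1/(1-z)$ by the telescoping identity, and $z\mapsto 1/(1-z)$ maps $S^1\setminus\{1\}$ bijectively onto the line $\mathrm{Re}\,w=\frac12$. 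Hence the boundary line is covered exactly once, and by the strict likelihood-ratio ordering no other ${\bf k}$ has $\mu_{\bf k}(\{1\})>0$.

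For the interior we fix ${\bf k}\neq(1,1,\ldots)$ and consider the closed curve $\gamma_{\bf k}:\theta\mapsto\widetilde{\mathrm{Li}}^{\star}_{\bf k}(e^{i\theta})$ (when $k_1\geq2$), or the arc over $\theta\in(0,2\pi)$ going to $\infty$ (when $k_1=1$). By the monotonicity results announced in the introduction, along the upper semicircle the argument of $\gamma_{\bf k}(\theta)-c$, for the appropriate translation point $c$, increases strictly while the modulus decreases strictly. So each curve is a Jordan curve, or an arc with both ends at infinity, symmetric under conjugation and meeting the real axis only at $\theta=0$ and $\theta=\pi$. The strict separation of curves for distinct indices then shows that the family $\{\gamma_{\bf k}\}$ is nested and pairwise disjoint. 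Injectivity of $\eta_{\mathcal S}$ follows from three facts: two points on the same curve differ by injectivity of $\theta\mapsto\gamma_{\bf k}(\theta)$, which is univalence from Theorem \ref{cm} extended to the boundary using the monotone argument; curves of different indices are disjoint; and the exclusion at $z=1$ matches the exclusions in $\mathcal{T}$.

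Surjectivity goes through the real axis. The real segment $(1/2,1)$ is swept by the values at $z=-1$, and $(1,\infty)$ by the values at $z=1$; the latter is exactly $\eta$ of \cite{lit}. We must show that ${\bf k}\mapsto\widetilde{\mathrm{Li}}^{\star}_{\bf k}(-1)$ is a decreasing bijection onto $(1/2,1)$ as $\beta({\bf k})$ runs over $(0,1)$. We would prove this by imitating \cite{lit}: monotonicity in reverse lexicographic order comes from the likelihood-ratio ordering, and the absence of gaps comes from continuity in $\beta$, using locally uniform convergence in Theorem \ref{cm}(iv). Given $w\in\mathcal{H}^{o}\setminus\{1\}$, the nested Jordan curves are indexed continuously by $\beta\in(0,1)$, with $\beta\to0$ giving curves shrinking toward $1$ and $\beta\to1$ approaching the boundary line. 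An intermediate-value argument on the real crossing, combined with the Jordan curve theorem, then produces exactly one curve through $w$.

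Finally, $\tau_{\mathcal S}$ is a bijection onto $\overline{\mathcal U}\setminus(\{0\}\cup[1/2,1])$. Here $\beta$ is a bijection $\widehat{\mathcal T}\to(0,1]$ via binary expansions, $\beta({\bf k})\geq1/2$ iff $k_1=1$, and the point $1/2$ corresponds to $(2,1,1,\ldots)$. Continuity of $\eta_{\mathcal S}\circ\tau_{\mathcal S}^{-1}$ requires joint continuity in $(z,\beta)$, which we get from the integral representation and weak continuity of $\mu_{\bf k}$ in $\beta$. Continuity of the inverse follows from properness, since values go to $\infty$ or to $1$ exactly at the removed set, together with invariance of domain. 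We expect the main obstacle to be the no-gap property: continuity of ${\bf k}\mapsto\mu_{\bf k}$ in the $\beta$-topology at dyadic points, where two indices such as $(k_1,\ldots,k_r+1,1,1,\ldots)$ and $(k_1,\ldots,k_r,\ldots)$ must give the same limiting curve. We would first try to verify this through the explicit telescoping $\mathrm{Li}^{\star}_{\ldots,k,1,1,\ldots}$ identities, as in \cite{lit}.
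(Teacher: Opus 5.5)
Your injectivity argument is essentially the paper's: strict separation of the curves for $z_1,z_2\neq1$, univalence of $H_{\bf k}$ on $\mathcal{D}\supset S^1\setminus\{1\}$, and \cite{lit} on the slice $z=1$. The surjectivity and continuity steps, however, have a genuine gap.

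As written, an intermediate-value argument on the real crossings only produces curves through \emph{real} points. For non-real $w$ you need a connectedness argument in $b$. The set of $b$ for which $w$ lies in the component of $\mathbb{C}\setminus\gamma_b$ containing $1$, and the set for which it lies in the other component, must both be open in $\{b:\,w\notin\gamma_b\}$. This requires $\gamma_b\to\gamma_{b_0}$ uniformly on all of $S^1$, and the difficulty sits at $z=1$, which you never address.

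For $b_0<\frac12$, weak continuity of $b\mapsto\mu_{{\bf k}(b)}$ does not give continuity at $(b_0,1)$. The kernel $(1-tz)^{-1}$ is unbounded near $(t,z)=(1,1)$, so mass may drift toward $t=1$. The paper (Proposition \ref{indexcontinuity}) instead uses coefficientwise summable majorants: $A_{(k_1,\ldots,k_{s-1},k_s-1)}(n)$ in general, and $A_{({\bf p},Q-1)}(n)$ above a dyadic $b_0$. That proposition also settles the dyadic continuity you flagged. Its upper side needs the bound \eqref{dyadicuniform}, uniform over arbitrary tails, not only a telescoping identity.

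For $b_0\in[\frac12,1]$ you need $|H_{{\bf k}(b)}(z)|\to\infty$ as $(b,z)\to(b_0,1)$. This is also what controls the change from closed curves to unbounded arcs at $b=\frac12$. The paper (Proposition \ref{Phiproper}) gets it from three facts:
the strict decrease of $\theta\mapsto|F_{{\bf k}(b)}(e^{i\theta})|$; continuity at one fixed $\theta_0$; and the comparison $|F_{{\bf k}(b_0)}|\geq|F_{(2,1,1,\ldots)}|$, where $H_{(2,1,1,\ldots)}(z)=-\log(1-z)/z$.

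These estimates are exactly the properness you assert (``values go to $\infty$ or to $1$ exactly at the removed set'') and then use for the inverse. Once properness is proved, the paper's route is shorter than a sweep. Invariance of domain makes the image of $X^{o}=\{0<|u|<1\}\setminus[\frac12,1)$ open in the connected set $\mathcal{H}^{o}\setminus\{1\}$. Properness makes it closed, hence it is everything. The index $(1,1,\ldots)$ then covers the line $\mathrm{Re}\,w=\frac12$ through $1/(1-z)$. No Jordan curve theorem, nesting, or separate treatment of the real slices is needed.

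Smaller points:

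\textbf{Boundary line.} Equality $\mathrm{Re}\,H_{\bf k}(z)=\frac12$ forces $\mu_{\bf k}=\delta_1$, not merely an atom at $1$. Since the first moment of $\mu_{\bf k}$ is $\beta({\bf k})$, this gives $\beta({\bf k})=1$ and ${\bf k}=(1,1,\ldots)$ at once. So you do not need to derive from the likelihood-ratio order that no other index produces an atom at $1$; that derivation would require a further passage to the limit.

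\textbf{Dichotomy.} The split between closed curves and unbounded arcs is ${\bf k}\in\mathcal{T}$ or not, not $k_1\geq2$ versus $k_1=1$. The index $(2,1,1,\ldots)$ has $k_1=2$ but an unbounded curve.

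\textbf{Translation point and $z=1$.} The translation point in the monotonicity statements is $c=1$; they concern $F_{\bf k}=H_{\bf k}-1$. The separation theorem excludes $z=1$. So the $z=1$ slice must be handled separately, and so must comparisons between values at $z=1$ and at $z\neq1$ (via reality and $H_{\bf k}(-1)\in[\frac12,1)$).
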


Putting $z=1$ recovers the zeta-star correspondence.  Putting $z=-1$ gives a second real slice.

\begin{Cor}\label{minusone}
The map
\[
\eta_{-1}:\widehat{\mathcal{T}}\longrightarrow\left[\frac12,1\right),
\qquad
{\bf k}\longmapsto
\lim_{r\rightarrow+\infty}
\sum_{n_1\geq\cdots\geq n_r\geq 1}
\frac{(-1)^{n_1-1}}{n_1^{k_1}\cdots n_r^{k_r}}
\]
is bijective.
\end{Cor}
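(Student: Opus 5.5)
We derive Corollary \ref{minusone} from Theorem \ref{one} by restricting to $z=-1$. First identify the series. By definition $\mathrm{Li}^{\star}_{k_1,\ldots,k_r}(-1)=\sum (-1)^{n_1}/(n_1^{k_1}\cdots n_r^{k_r})$, with partial sums in $n_1$ when $k_1=1$; Section \ref{sec:cm} shows these agree with the analytic continuation. Since
\[
\widetilde{\mathrm{Li}}^{\star}_{k_1,\ldots,k_r}(-1)=-\mathrm{Li}^{\star}_{k_1,\ldots,k_r}(-1)=\sum_{n_1\geq\cdots\geq n_r\geq1}\frac{(-1)^{n_1-1}}{n_1^{k_1}\cdots n_r^{k_r}},
\]
the expression in the corollary equals $\lim_r \widetilde{\mathrm{Li}}^{\star}_{k_1,\ldots,k_r}(-1)$.

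Next we must check that this limit is $\widetilde{\mathrm{Li}}^{\star}_{\bf k}(-1)$. The point $-1$ lies in $\mathcal{D}$, and Theorem \ref{cm}(iv) gives locally uniform convergence there, so the limit exists and equals the value of the limit function at $-1$. For $z=-1$ the definition of $\widetilde{\mathcal{ST}}$ imposes no restriction on ${\bf k}$, so $\{-1\}\times\widehat{\mathcal{T}}\subset\widetilde{\mathcal{ST}}$ and $\eta_{-1}({\bf k})=\eta_{\mathcal{S}}(-1,{\bf k})$. Injectivity of $\eta_{-1}$ therefore follows immediately from the bijectivity in Theorem \ref{one}.

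The image is real. Each $\widetilde{\mathrm{Li}}^{\star}_{\bf k}$ has a Hausdorff--Stieltjes representation $\int_{[0,1]}d\mu(t)/(1+t)$ at $z=-1$ with $\mu([0,1])=1$. Since $1/(1+t)\in[\tfrac12,1]$ on $[0,1]$, the value lies in $[\tfrac12,1]$. The value $1$ is excluded because $1\notin$ the image of $\eta_{\mathcal{S}}$ (equivalently, it would force $\mu=\delta_0$, impossible for a nonconstant function). Hence $\eta_{-1}(\widehat{\mathcal{T}})\subseteq[\tfrac12,1)$.

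Surjectivity is the main point. Take $w\in[\tfrac12,1)$; then $w\in\mathcal{H}\setminus\{1\}$, so Theorem \ref{one} gives a unique $(z,{\bf k})\in\widetilde{\mathcal{ST}}$ with $\widetilde{\mathrm{Li}}^{\star}_{\bf k}(z)=w$, and we must show $z=-1$. Conjugation symmetry, since the coefficients are real, gives $\widetilde{\mathrm{Li}}^{\star}_{\bf k}(\bar z)=\bar w=w$, and uniqueness forces $z=\bar z$, so $z=\pm1$. If $z=1$, the value is $\lim\zeta^{\star}({\bf k})>1$ by the zeta-star correspondence $\eta$, which contradicts $w<1$. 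Hence $z=-1$ and $w=\eta_{-1}({\bf k})$. The only delicate step is the identification of boundary partial sums with the analytic values, which is supplied by Section \ref{sec:cm} and Theorem \ref{cm}(iv).
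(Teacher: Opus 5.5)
Your proposal is correct and follows the paper's proof. Like the paper, you restrict Theorem \ref{one} to $z=-1$, place the image in $[\frac12,1)$ via the Hausdorff--Stieltjes representation, and show that the unique preimage of $w\in[\frac12,1)$ has $z=-1$, excluding $z=1$ because the value there exceeds $1$. The only minor difference is how you rule out nonreal $z$: you use the reflection identity $H_{\bf k}(\bar z)=\overline{H_{\bf k}(z)}$ together with uniqueness, whereas the paper notes directly that a nonreal unit-circle parameter gives a nonreal value (from the sign of $\mathrm{Im}\,F_{\bf k}$). Both arguments are valid.
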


For $N\geq 1$, let $\mu_N$ be the set of $N$-th roots of unity and define
\[
\mathcal{C}^{\star}=
\bigcup_{N\geq1}\left\{
\sum_{n_1\geq\cdots\geq n_r\geq 1}
\frac{z^{n_1-1}}{n_1^{k_1}\cdots n_r^{k_r}}
\,\bigg|\,
 z\in\mu_N,\ r\geq 1,\ k_1,\ldots,k_r\geq1,\ (k_1,z)\neq(1,1)
\right\}.
\]
For a subset $E\subseteq\mathbb{C}$, denote its derived set by $E^{\prime}$, put $E^{(0)}=E$, and define $E^{(n+1)}=(E^{(n)})^{\prime}$.

\begin{Thm}\label{cyc}
The set $\mathcal{C}^{\star}$ is a countable dense subset of $\mathcal{H}^{o}$.  Consequently,
\[
(\mathcal{C}^{\star})^{(n)}=\mathcal{H},
\qquad n\geq 1.
\]
\end{Thm}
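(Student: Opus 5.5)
We deduce Theorem \ref{cyc} from Theorem \ref{one} together with the density of roots of unity in $S^1$. Note first that $\mathcal{C}^{\star}$ is countable, since it is indexed by the countable data $(N,z,r,k_1,\ldots,k_r)$. Each element of $\mathcal{C}^{\star}$ equals $\widetilde{\mathrm{Li}}^{\star}_{k_1,\ldots,k_r}(z)$ for a root of unity $z$, and the condition $(k_1,z)\neq(1,1)$ ensures convergence. A finite index $(k_1,\ldots,k_r)$ corresponds to the infinite index $(k_1,\ldots,k_r,1,1,\ldots)$ only in the limit, so we do not identify them directly. Instead we observe that $\widetilde{\mathrm{Li}}^{\star}_{k_1,\ldots,k_r}(z)=\lim_{s\to\infty}\widetilde{\mathrm{Li}}^{\star}_{k_1,\ldots,k_r,\{1\}^s}(z)$ fails in general, and argue directly instead.

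\emph{Step 1: $\mathcal{C}^{\star}\subseteq\mathcal{H}^{o}$.} By Theorem \ref{cm}(i), $\widetilde{\mathrm{Li}}^{\star}_{\bf k}(z)=\int_{[0,1]}\frac{d\mu(t)}{1-tz}$ with $\mu$ a probability measure. For $|z|=1$ and $t\in[0,1)$ one has $\mathrm{Re}\,\frac{1}{1-tz}\geq\frac{1}{1+t}>\frac12$. For a finite index the measure has a density, as stated in the abstract (it is of the form $\frac{(-\log t)^{\ldots}}{\ldots}$), so $\mu(\{1\})=0$. Hence the real part is strictly greater than $\frac12$; the boundary value at $|z|=1$ is obtained by monotone or dominated convergence, using the agreement of the partial sums with the analytic continuation established in Section \ref{sec:cm}.

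\emph{Step 2: density.} Fix $w\in\mathcal{H}^{o}$ with $w\neq1$ (the point $1$ is handled by nearby points). By Theorem \ref{one}, $w=\eta_{\mathcal{S}}\circ\tau_{\mathcal{S}}^{-1}(u)$ for some $u=z_0\beta({\bf k})$ in the open set $\mathcal{U}\setminus\{0\}$ away from $[1/2,1]$, which is an open subset of the domain. Since $\eta_{\mathcal{S}}\circ\tau_{\mathcal{S}}^{-1}$ is a homeomorphism, for every $\varepsilon>0$ there is a neighbourhood $V$ of $u$ mapped into $B(w,\varepsilon)$. Points $z\beta({\bf k}')$ with $z$ a root of unity and ${\bf k}'$ eventually equal to $(1,1,\ldots)$, i.e.\ with $\beta({\bf k}')$ a dyadic rational, are dense, so $V$ contains such a point. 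It remains to show that $\widetilde{\mathrm{Li}}^{\star}_{(k_1,\ldots,k_r,1,1,\ldots)}(z)$ is a limit of elements of $\mathcal{C}^{\star}$. By Theorem \ref{cm}(iv) it equals $\lim_{s}\widetilde{\mathrm{Li}}^{\star}_{k_1,\ldots,k_r,\{1\}^s}(z)$, and each of these terms lies in $\mathcal{C}^{\star}$ provided $(k_1,z)\neq(1,1)$. If $z=1$ we instead perturb $z$ to a nearby root of unity $\neq1$. The main obstacle is that Theorem \ref{cm}(iv) gives convergence only locally uniformly on $\mathcal{D}$, not on $S^1$. To obtain convergence at boundary points, I would use the Hausdorff--Stieltjes representation: the measures $\mu_s$ converge weakly, and for $z\neq1$ the kernel $\frac{1}{1-tz}$ is continuous on $[0,1]$. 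For $z=1$ we avoid the issue by the perturbation just described. This gives $\overline{\mathcal{C}^{\star}}\supseteq\mathcal{H}$.

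\emph{Step 3: derived sets.} Since $\mathcal{C}^{\star}$ is dense in the open set $\mathcal{H}^{o}$, every point of $\mathcal{H}=\overline{\mathcal{H}^{o}}$ is a limit point of $\mathcal{C}^{\star}$, because any neighbourhood meets $\mathcal{H}^{o}$ in an open set that contains infinitely many points of $\mathcal{C}^{\star}$. Conversely $(\mathcal{C}^{\star})'\subseteq\overline{\mathcal{C}^{\star}}\subseteq\mathcal{H}$. Hence $(\mathcal{C}^{\star})'=\mathcal{H}$. Since $\mathcal{H}$ is closed and has no isolated points, $\mathcal{H}'=\mathcal{H}$, and induction gives $(\mathcal{C}^{\star})^{(n)}=\mathcal{H}$ for all $n\geq1$.
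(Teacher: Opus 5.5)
Your proof is correct. Steps~1 and~3 agree with the paper: your bound $\mathrm{Re}\,(1-tz)^{-1}\geq(1+t)^{-1}$ plays the role of the paper's identity $\mathrm{Re}\,(1-tz)^{-1}-\frac12=(1-t^2)/(2|1-tz|^2)$. Your density step, however, is organized differently.

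The paper uses only the surjectivity in Theorem~\ref{one}. It takes the exact preimage $(z,{\bf k})$ of $w$ and simultaneously replaces $z$ by roots of unity $z_r\to z$ and ${\bf k}$ by its truncations ${\bf k}^{(r)}$. This requires convergence of $H_{{\bf k}^{(r)}}$ uniformly on a compact arc of $S^1\setminus\{1\}$ (Lemma~\ref{weakconvergence}), and the paper treats $z=1$ separately via $\zeta^{\star}(k_1,\ldots,k_r)\to H_{\bf k}(1)$.

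You instead use the continuity half of Theorem~\ref{one}, i.e.\ continuity of $\Phi_{\mathcal{S}}=\eta_{\mathcal{S}}\circ\tau_{\mathcal{S}}^{-1}$ (Proposition~\ref{indexcontinuity}). This includes its boundary statement at $z=1$, which you need when $w>1$ is real. With it you first move $u$ to a nearby point $z'b'$ with $z'\neq1$ a root of unity. Afterwards only convergence at one fixed point is required, and the case $z=1$ never arises. So you trade a lighter analytic step for a heavier structural input. The paper's version approximates $w$ by truncations of its own symbolic coordinate, which is what its closing remark on constructivity refers to.

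Two points in your Step~2 can be simplified.

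First, the ``main obstacle'' does not exist. Since $S^1\setminus\{1\}\subset\mathcal{D}$, Theorem~\ref{cm}(iv) already gives $H_{(k_1,\ldots,k_s)}(z')\to H_{\bf k}(z')$ at every root of unity $z'\neq1$, for any infinite index ${\bf k}$. The weak-convergence detour (which is Lemma~\ref{weakconvergence}) is therefore unnecessary, and so is the restriction to dyadic $b'$ for this purpose.

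Second, once you do restrict to dyadic $b'$, truncation is not needed at all. The correct dictionary is not $(k_1,\ldots,k_r)\leftrightarrow(k_1,\ldots,k_r,1,1,\ldots)$. By Proposition~\ref{speciallimits}(ii) it is $H_{(p_1,\ldots,p_{r-1},p_r+1,1,1,\ldots)}=H_{(p_1,\ldots,p_r)}$. Hence $\Phi_{\mathcal{S}}(z'b')$ is itself an element of $\mathcal{C}^{\star}$ for every root of unity $z'\neq1$ and every dyadic $b'\in(0,1)$. These points are dense in $X$, so density of $\mathcal{C}^{\star}$ follows directly from continuity and surjectivity of $\Phi_{\mathcal{S}}$.
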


We call the bijective map $\eta_{\mathcal{S}}$ polylogarithms star correspondence. The polylogarithms star correspondence is a generalization of original zeta star correspondence.   The correspondence has several potential applications.  First, it gives a symbolic coordinate system for the punctured half-plane, in which the radial coordinate is encoded by an infinite positive-integer sequence and the angular coordinate is encoded by a point of the unit circle.  Secondly, Theorem \ref{cyc} provides a constructive approximation principle: every point of $\mathcal{H}^{o}$ can be approximated by cyclotomic multiple zeta-star values.  This may be useful in experimental arithmetic and in the numerical study of special values at roots of unity.  Thirdly, the Hausdorff--Stieltjes representation gives positivity, moment inequalities, and Pick-function structure, which can be used for coefficient estimates and boundary-value problems.  These applications are geometric and approximation-theoretic. The occurrence of the half-plane $\mathrm{Re}\,w\geq\frac12$ by itself does not imply a statement about zeros of the Riemann zeta function.

 \begin{rem}
  In a remarkable paper \cite{lew}, J. Lewis studied the polylogarithms 
  \[
  \mathrm{Li}_{\alpha}=\sum_{n=1}^{+\infty} \frac{z^n}{n^{\alpha}},\; \alpha \geq 0
  \]
  for $|z|<1$. Lewis showed that $ \mathrm{Li}_{\alpha}$ maps the open unit disc one-to-one onto a convex domain for $\alpha\geq 0$.  For $k_1,\cdots, k_r\geq 1$, as the multiple star polylogarithm 
      \[
{ \mathrm{Li}^\star_{k_1,\cdots,k_r}(z)}=\sum_{n_1\geq  \cdots\geq  n_r\geq 1}\frac{z^{n_1}}{n_1^{k_1}\cdots n_r^{k_r}}
              \] 
               is univalent on the domain  \[\mathcal{D}=\big{\{}z\;\big{|}\, z\in \mathbb{C}, \mathrm{Re}\; (z)<1\big{\}} ,\] 
               can we show that  the multiple star polylogarithm 
               \[
 \mathrm{Li}^\star_{k_1,\cdots,k_r}(z)=\sum_{n_1\geq  \cdots\geq  n_r\geq 1}\frac{z^{n_1}}{n_1^{k_1}\cdots n_r^{k_r}}\] maps the domain $\mathcal{D}$ one-to-one onto a convex domain?
                 \end{rem}
                 
         For two holomorphic functions $f$ and $g$ on $\mathcal{U}$. We say that $g$ is subordinate to $f$ $(f\succ g)$ if there exists a       
                 holomorphic map $p: \mathcal{U}\rightarrow \mathcal{U}$ such that
                 \[
                 f(p(z))=g(z),\quad p(0)=0.
                 \]
               For positive multi-indices
$\mathbf k=(k_1,\ldots,k_r)$ and
$\boldsymbol\ell=(\ell_1,\ldots,\ell_s)$, write
$\mathbf k\succ\boldsymbol\ell$ when either $\boldsymbol\ell$ is a proper prefix of $\mathbf k$, or
 at the first position $i$ at which they differ, one has
       $k_i<\ell_i$. Now we propose the following conjecture.
       \begin{conj} For ${\bf k}\succ {\bf l}$, we have\\
              $(i)$  $\widetilde{\mathrm{Li}}^{\star}_{{\bf k}}(z) \succ  \widetilde{\mathrm{Li}}^{\star}_{{\bf l}}(z)    $ on $\mathcal{U}$. \\
      $(ii)$ Furthermore, denote by $p_{{\bf k},{\bf l}}$  the holomorphic map $p_{{\bf k},{\bf l}}: \mathcal{U}\rightarrow \mathcal{U} $ which satisfies 
       \[
        \widetilde{\mathrm{Li}}^{\star}_{{\bf k}}\left( p_{{\bf k},{\bf l}}(z)\right) = \widetilde{\mathrm{Li}}^{\star}_{{\bf l}}(z)        \]
        on $\mathcal{U}$. Let 
        \(
        p_{{\bf k},{\bf l}} (z) =\sum_{n\geq 1} c_n z^n\), then 
        \[
        c_n>0,\quad \forall\, n\geq 1,\quad \sum_{n\geq 1} c_n\leq 1.
        \]
       \end{conj}

The paper is organized as follows.  Section 2 constructs the finite and infinite-depth functions and proves the special limiting identities needed later.  Section 3 establishes complete monotonicity and univalence.  Section 4 proves strict separation of unit-circle curves by the Hausdorff--Stieltjes and monotone likelihood-ratio method.  Section 5 proves Theorem \ref{one} by introducing the binary index coordinate and applying the invariance of domain theorem to a proper injective map.  Section 6 proves the cyclotomic density and the derived-set statement.

The main results of this paper are inspired by the following beautiful formula
  \[
  \frac{1}{1-e^{i\theta}}=\frac{1}{2}+\frac{i}{2}\cdot \mathrm{cot}\,\frac{\theta}{2}, \;\theta\in\mathbb{R}.
  \]

\section{Finite and infinite multiple star polylogarithms}\label{sec:limits}
For $m,r\geq 1$, put
\[
S_r(m)=\sum_{m\geq n_1\geq\cdots\geq n_r\geq 1}
\frac{1}{n_1\cdots n_r}.
\]

\begin{lem}\label{lim}
$(i)$ If $m\geq 2$, then
\[
1<S_r(m)<m.
\]

$(ii)$ For every $m\geq 1$, the sequence $S_r(m)$ is increasing and
\[
\lim_{r\rightarrow+\infty}S_r(m)=m.
\]

$(iii)$ Let $k_1,k_2,\ldots\geq 1$.  For fixed $m\geq 1$, the limit
\[
\lim_{r\rightarrow+\infty}
\sum_{m\geq n_1\geq\cdots\geq n_r\geq 1}
\frac{1}{n_1^{k_1}\cdots n_r^{k_r}}
\]
exists.
\end{lem}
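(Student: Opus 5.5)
Everything rests on a recursion in $r$. Writing $S_r(m)=\sum_{n=1}^m \frac{1}{n}S_{r-1}(n)$ with $S_0(n)=1$, we have $S_1(m)=H_m$, the harmonic number. The key observation is the identity
\[
S_r(m)=\sum_{n=1}^{m}\frac{1}{n}S_{r-1}(n),
\]
which turns everything into a statement about iterating the averaging operator $T:f\mapsto \big(m\mapsto\sum_{n\le m}f(n)/n\big)$. A direct check shows $T(\mathrm{id})(m)=\sum_{n\le m}1=m$. So the identity function $m\mapsto m$ is a fixed point of $T$, and $T$ is monotone: if $f\le g$ pointwise then $Tf\le Tg$. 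For (i), the lower bound is immediate. Since $m\ge2$, the sum defining $S_r(m)$ contains the term with all $n_i=1$, which equals $1$, plus at least one further positive term (all $n_i$ equal to $2$), so $S_r(m)>1$. For the upper bound, note $S_0(n)=1\le n$, with strict inequality for $n\ge2$. Applying $T$ repeatedly gives $S_r(m)\le m$. Strictness at $m\ge2$ follows because the term $n=2$ in $\sum_{n\le m}S_{r-1}(n)/n$ is strictly smaller than the corresponding term in $\sum_{n\le m} n/n$; inductively, $S_{r-1}(2)<2$. Thus $S_r(m)<m$ for $m\ge 2$, and $S_r(1)=1$.

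For (ii), monotonicity in $r$ follows from the same monotone operator. We have $S_1(n)=H_n\ge1=S_0(n)$, and applying $T$ to $S_{r}\ge S_{r-1}$ gives $S_{r+1}\ge S_r$. Hence $S_r(m)$ is increasing and bounded by $m$, so it converges to some limit $L(m)\le m$. Passing to the limit in the finite recursion, which is legitimate because for fixed $m$ it is a finite sum, gives
\[
L(m)=\sum_{n\le m}\frac{L(n)}{n}.
\]
Subtracting consecutive instances gives $L(m)-L(m-1)=L(m)/m$, that is, $L(m)=\frac{m}{m-1}L(m-1)$ for $m\ge2$. Since $L(1)=1$, induction yields $L(m)=m$. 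This is the main step, and the only point needing care is the justification of the limit interchange, which is trivial for finite sums.

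For (iii), with general $k_i\ge1$, put $T_r(m)=\sum_{m\ge n_1\ge\cdots\ge n_r\ge1}\prod_i n_i^{-k_i}$. I would show monotonicity by comparing depth $r$ with depth $r+1$. The depth-$(r+1)$ sum contains, as a subsum, the terms with $n_{r+1}=1$, and these reproduce $T_r(m)$ exactly since $1^{-k_{r+1}}=1$. All other terms are nonnegative, so $T_{r+1}(m)\ge T_r(m)$. For boundedness, $n_i^{-k_i}\le n_i^{-1}$ gives $T_r(m)\le S_r(m)\le m$ by (i) and (ii). A bounded increasing sequence converges, which finishes the proof.
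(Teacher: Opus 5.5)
Your proof is correct. For (i) and (iii) it is essentially the paper's argument. The paper's upper bound deletes the factor $1/n_r$ and then telescopes $\sum_{n_r\le n_{r-1}}1=n_{r-1}$ against $1/n_{r-1}$, and so on. That is exactly your identity $T(\mathrm{id})=\mathrm{id}$ combined with the monotonicity of $T$: the paper compares at the level $S_1=H\le\mathrm{id}$, you compare at $S_0=1\le\mathrm{id}$. Part (iii) is the same as the paper's.

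The genuine difference is in (ii). The paper splits each string according to the last position at which $n_j>1$, obtaining
\[
S_r(m)=\sum_{j=0}^{r}\ \sum_{m\ge n_1\ge\cdots\ge n_j\ge 2}\frac{1}{n_1\cdots n_j}.
\]
This is a sum of nonnegative blocks, so monotonicity in $r$ is immediate. The limit is then computed directly as the finite Euler-type product $\prod_{n=2}^{m}(1-1/n)^{-1}=\prod_{n=2}^{m}\frac{n}{n-1}=m$.

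You instead obtain monotonicity by induction through the positive operator $T$. You then use the a priori bound from (i) to know that the limit $L$ is finite, and identify $L$ as a fixed point of $T$. Your recursion $L(m)=\frac{m}{m-1}L(m-1)$ reproduces the same telescoping product, arriving from the opposite direction. Note that the fixed-point equation at $m=1$ is vacuous, so the normalization $L(1)=S_r(1)=1$, which you do supply, is genuinely needed.

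What each route buys: the paper's is self-contained, since it does not use (i), and it gives an explicit formula for the defect $m-S_r(m)$ as the tail of that expansion. Yours is softer and more structural, exhibiting $\mathrm{id}$ as the fixed point to which $T^r\mathbf 1$ increases. It relies on (i) for finiteness of the limit and on the (here trivial) interchange of a limit with a finite sum.
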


\begin{proof}
For $(i)$, the lower bound follows from the two terms with all $n_j=1$ and all $n_j=2$.  On the other hand,
\[
\begin{split}
\sum_{m\geq n_1\geq\cdots\geq n_r\geq 1}
\frac{1}{n_1\cdots n_r}
&<\sum_{m\geq n_1\geq\cdots\geq n_r\geq 1}
\frac{1}{n_1\cdots n_{r-1}}\\
&=\sum_{m\geq n_1\geq\cdots\geq n_{r-1}\geq 1}
\frac{n_{r-1}}{n_1\cdots n_{r-1}}\\
&=\cdots=m.
\end{split}
\]

For $(ii)$, separating the strings according to the last place at which the value is greater than $1$ gives
\[
S_r(m)=1+\sum_{m\geq n_1\geq 2}\frac1{n_1}+
\cdots+
\sum_{m\geq n_1\geq\cdots\geq n_r\geq 2}
\frac1{n_1\cdots n_r}.
\]
Hence $S_r(m)$ is increasing, and
\[
\begin{split}
\lim_{r\rightarrow+\infty}S_r(m)
&=\prod_{n=2}^{m}\left(1+\frac1n+\frac1{n^2}+\cdots\right)\\
&=\prod_{n=2}^{m}\frac{n}{n-1}=m.
\end{split}
\]
For $(iii)$, the corresponding sequence is increasing, because the terms with $n_{r+1}=1$ reproduce the preceding sum.  It is bounded above by $S_r(m)<m$.  Therefore the limit exists.
\end{proof}

For a nonempty finite index ${\bf k}=(k_1,\ldots,k_r)$, define
\begin{equation}\label{Adef}
A_{\bf k}(n)=\frac{1}{(n+1)^{k_1}}
\sum_{n+1\geq n_2\geq\cdots\geq n_r\geq 1}
\frac{1}{n_2^{k_2}\cdots n_r^{k_r}},\quad   n\geq 0.
\end{equation}
where the empty inner sum is $1$.  Then, for $|z|<1$,
\begin{equation}\label{Hseries}
H_{\bf k}(z):=\widetilde{\mathrm{Li}}^{\star}_{\bf k}(z)
=\sum_{n=0}^{+\infty}A_{\bf k}(n)z^n.
\end{equation}

\begin{prop}\label{equ}
For every finite index ${\bf k}=(k_1,\ldots,k_r)$,
\begin{equation}\label{cubeint}
H_{\bf k}(z)=
\int_{[0,1]^{k_1+\cdots+k_r}}
\frac{dt_1\cdots dt_{k_1+\cdots+k_r}}
{(1-zt_1\cdots t_{k_1})
 (1-zt_1\cdots t_{k_1+k_2})\cdots
 (1-zt_1\cdots t_{k_1+\cdots+k_r})}
\end{equation}
holds for $|z|<1$.  The right-hand side is holomorphic on $\Lambda$ and gives the analytic continuation of $H_{\bf k}$ to $\Lambda$.
\end{prop}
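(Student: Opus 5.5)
The plan is to expand the integrand as a geometric series for $|z|<1$, integrate term by term, and match the result with the series \eqref{Hseries}.

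Write $K_j=k_1+\cdots+k_j$ and $u_j=t_1\cdots t_{K_j}$. For $|z|<1$ and $t\in[0,1]^{K_r}$ we have $|zu_j|<1$, so each factor expands as $\frac{1}{1-zu_j}=\sum_{m_j\ge0}z^{m_j}u_j^{m_j}$. The product of these series is dominated by $\prod_j(1-|z|)^{-1}$, so dominated convergence lets us integrate term by term. In the monomial $\prod_j u_j^{m_j}$, the variable $t_i$ with $K_{j-1}<i\le K_j$ carries the exponent $M_j:=m_j+m_{j+1}+\cdots+m_r$. Integrating over $[0,1]^{K_r}$ therefore gives
\[
\sum_{m_1,\ldots,m_r\ge0} z^{m_1+\cdots+m_r}\prod_{j=1}^r \frac{1}{(M_j+1)^{k_j}}.
\]
Now substitute $n_j=M_j+1$. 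The map $(m_j)\mapsto(n_j)$ is a bijection onto the set of sequences $n_1\ge n_2\ge\cdots\ge n_r\ge1$, and the total exponent is $\sum_j m_j=M_1=n_1-1$. The sum becomes $\sum_{n_1\ge\cdots\ge n_r\ge1} z^{n_1-1}/(n_1^{k_1}\cdots n_r^{k_r})$. This equals $\mathrm{Li}^\star_{\bf k}(z)/z=H_{\bf k}(z)$, and grouping the terms by $n_1=n+1$ reproduces the coefficients $A_{\bf k}(n)$ of \eqref{Adef}.

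For holomorphy on $\Lambda$, fix a compact set $K\subset\Lambda$. The main point is a uniform lower bound $|1-zs|\ge c_K>0$ for $z\in K$ and $s\in[0,1]$.
\begin{itemize}
\item If $zs\neq0$, then $1-zs=0$ would force $z=1/s\in[1,\infty)$, which is excluded on $\Lambda$.
\item If $zs=0$, then $1-zs=1\neq0$.
\end{itemize}
So $1-zs$ does not vanish on $K\times[0,1]$, and by continuity and compactness its modulus has a positive minimum $c_K$.

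Hence the integrand is continuous in $(z,t)$, holomorphic in $z$, and bounded by $c_K^{-r}$ on $K\times[0,1]^{K_r}$. Holomorphy of the integral then follows by the standard argument: either Morera's theorem combined with Fubini, or differentiation under the integral sign.

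Since $\Lambda$ is connected and contains the unit disc, the identity theorem shows that this integral is the unique analytic continuation of $H_{\bf k}$ to $\Lambda$. The only point needing care is the nonvanishing of $1-zs$ on $K\times[0,1]$; the rest is the bookkeeping of exponents above.
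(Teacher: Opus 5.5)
Your proposal is correct and follows the paper's proof essentially step for step: geometric expansion of each factor, term-by-term integration with $n_j=1+m_j+\cdots+m_r$, a uniform lower bound for $|1-zs|$ on compact subsets of $\Lambda$ to get holomorphy, and the identity theorem for the continuation. You simply spell out details the paper leaves implicit, namely the integrable majorant $(1-|z|)^{-r}$ for the interchange of sum and integral, and the compactness argument that gives the positive lower bound $c_K$.
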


\begin{proof}
For $|z|<1$, expand the $j$-th denominator in a geometric series with exponent $a_j\geq0$ and integrate term by term.  With
\[
n_j=1+a_j+a_{j+1}+\cdots+a_r\qquad(1\leq j\leq r),
\]
one has $n_1\geq\cdots\geq n_r\geq1$, the power of $z$ is $n_1-1$, and the integrations over the $j$-th block contribute $n_j^{-k_j}$.  This gives \eqref{Hseries}.  If $K\Subset\Lambda$, then all factors $1-zt$ are bounded away from zero uniformly for $z\in K$ and $0\leq t\leq1$.  Dominated differentiation under the integral therefore proves holomorphy on $\Lambda$, and the identity theorem gives the claimed analytic continuation.
\end{proof}

Let ${\bf k}=(k_1,\cdots,k_r,\ldots)\in\widehat{\mathcal{T}}$.  By Lemma \ref{lim}, $(iii)$, the limit
\begin{equation}\label{Ainfty}
A_{\bf k}(n)=\lim_{r\rightarrow+\infty}A_{k_1,\ldots,k_r}(n)
\end{equation}
exists for every $n\geq 0$.  Notice that
\begin{equation}\label{Abound}
0<A_{\bf k}(n)\leq 1,
\qquad A_{\bf k}(0)=1.
\end{equation}
Indeed, Lemma \ref{lim}, $(ii)$, gives the upper bound after replacing all exponents by $1$.

\begin{prop}\label{exi}
For every ${\bf k}\in\widehat{\mathcal{T}}$, the sequence $H_{k_1,\ldots,k_r}$ converges locally uniformly on $\mathcal{D}$ to a holomorphic function $H_{\bf k}$.  For $|z|<1$,
\begin{equation}\label{Hinftyseries}
H_{\bf k}(z)=\sum_{n=0}^{+\infty}A_{\bf k}(n)z^n.
\end{equation}
If ${\bf k}\in\mathcal{T}$, then the boundary value
\[
H_{\bf k}(1):=\lim_{r\rightarrow+\infty}\zeta^{\star}(k_1,\ldots,k_r)
\]
exists and is greater than $1$.
\end{prop}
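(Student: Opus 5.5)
The plan is to work first on the unit disc, where everything is explicit, and then push the convergence out to $\mathcal{D}$ using the integral representation of Proposition \ref{equ}.

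\textbf{Series on the disc.} For $|z|<1$ we have $|A_{k_1,\ldots,k_r}(n)|\leq 1$ by Lemma \ref{lim}, (ii), and the coefficients converge termwise to $A_{\bf k}(n)$ by \eqref{Ainfty}. Dominated convergence on the power series $\sum_n A_{k_1,\ldots,k_r}(n)z^n$, using the majorant $\sum_n|z|^n$, then gives locally uniform convergence on $\mathcal{U}$ to $\sum_n A_{\bf k}(n)z^n$. This establishes \eqref{Hinftyseries} once the limit on $\mathcal{D}$ is known to agree with this series on $\mathcal{U}$.

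\textbf{Locally uniform bound on $\mathcal{D}$.} Next I want a bound for $H_{k_1,\ldots,k_r}$ on compact subsets of $\mathcal{D}$ that is independent of $r$. The plan is to write $H_{k_1,\ldots,k_r}(z)=\int_{[0,1]}\frac{d\mu_r(t)}{1-tz}$, where $\mu_r$ is the push-forward of the measure in \eqref{cubeint}. To keep this a probability measure, I would use complete monotonicity of the coefficients instead of expanding the product in \eqref{cubeint} directly. Concretely, the sequence $A_{k_1,\ldots,k_r}(n)$ is completely monotone with $A(0)=1$: it is a product and sum of moment sequences, namely $(n+1)^{-k_1}$ times the inner sums, and each term $\mathbf 1_{n+1\geq n_2}/n_2^{k_2}\cdots$ is realized by the cube integral. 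This is precisely Theorem \ref{cm}(i), which I would establish in this section or invoke. Once $\mu_r([0,1])=1$, the elementary estimate
\[
\mathrm{Re}\,z\le 1-\delta \;\Longrightarrow\; |1-tz|\geq \mathrm{Re}(1-tz)\geq \min(1,\delta)
\]
for $t\in[0,1]$ gives $|H_{k_1,\ldots,k_r}(z)|\leq 1/\min(1,\delta)$ uniformly in $r$.

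\textbf{Passing to the limit.} The family $\{H_{k_1,\ldots,k_r}\}_r$ is therefore normal on $\mathcal{D}$ by Montel's theorem. It converges on $\mathcal{U}$, and $\mathcal{D}$ is connected, so Vitali's theorem yields locally uniform convergence on all of $\mathcal{D}$ to a holomorphic $H_{\bf k}$. Alternatively, one can use weak compactness of the $\mu_r$: the moments converge by \eqref{Ainfty}, so $\mu_r\to\mu$ weakly and $H_{\bf k}=\int d\mu/(1-tz)$.

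\textbf{Boundary value for ${\bf k}\in\mathcal{T}$.} Here $H_{k_1,\ldots,k_r}(1)=\zeta^{\star}(k_1,\ldots,k_r)$ is increasing in $r$, since the terms with $n_{r+1}=1$ reproduce the previous sum. The limit exists and is finite by the zeta-star correspondence $\eta$ of \cite{lit}, and it exceeds $1$ because it lies in $(1,+\infty)$. For a self-contained argument, I would bound $\zeta^\star(k_1,\ldots,k_r)\leq \zeta^\star(2,\{1\}^{r-1})$-type quantities, which works as long as ${\bf k}\ne(2,1,1,\ldots)$.

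\textbf{Main obstacle.} The key difficulty is the uniform-in-$r$ bound on $\mathcal{D}$. This requires the total mass normalization, that is, complete monotonicity with $A(0)=1$, rather than a crude estimate of the integrand in \eqref{cubeint}. Such a crude estimate would grow with the depth $r$, because the integrand has $r$ factors.
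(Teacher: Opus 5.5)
Your proof is correct, and it reaches the key $r$-independent bound on $\mathcal{D}$ by a different route from the paper. This holds provided you invoke Proposition~\ref{momentdensity} rather than re-derive it. There is no circularity: that proposition is proved from \eqref{ArecM}, \eqref{ArecC} and the operators $\mathsf{M},\mathsf{C}$ alone, without Proposition~\ref{exi}.

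The paper stays within Section~2. For $\mathrm{Re}\,z\le 1-\varepsilon$ it bounds each factor of \eqref{cubeint} below by $1-(1-\varepsilon)t_1\cdots t_{k_1+\cdots+k_j}$. It then inserts the missing intermediate factors $1-(1-\varepsilon)t_1\cdots t_i\in[\varepsilon,1]$, which only enlarges the majorant. The result is $H_{\{1\}^N}(1-\varepsilon)=\sum_n A_{\{1\}^N}(n)(1-\varepsilon)^n\le 1/\varepsilon$ by Lemma~\ref{lim}. So the cube integral does give a depth-independent bound directly, once it is compared with the all-ones integrand instead of estimating factors one at a time.

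Your route uses the probability density $\sigma_{\bf k}$ and $|1-tz|\ge\min(1,\delta)$. It is shorter and yields the same bound, but it imports the Hausdorff--Stieltjes machinery of Section~3 before the infinite-depth functions are constructed. The disc step and the Vitali step are then identical to the paper's.

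Two of your side remarks would fail if you relied on them.

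First, your sketch of complete monotonicity is wrong. For $r\ge2$ the inner sums are strictly increasing in $n$, and the terms $n\mapsto\mathbf 1_{n+1\ge n_2}/(n_2^{k_2}\cdots)$ with $n_2\ge2$ jump up from $0$. So none of these is a moment sequence, and $A_{\bf k}(n)$ is not exhibited as sums and products of moment sequences this way. What actually makes $A_{\bf k}(n)$ completely monotone is that the Ces\`aro averaging in \eqref{ArecC} maps moment sequences to moment sequences. That is precisely the content of Proposition~\ref{momentdensity}.

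Second, bounding $\zeta^\star(k_1,\ldots,k_r)$ by $\zeta^\star(2,\{1\}^{r-1})$ is useless, since the latter tends to $+\infty$; the paper shows this right after the proposition. The self-contained argument sums the tail with Lemma~\ref{lim}:
\begin{itemize}
\item for $k_1\ge3$ it gives the bound $\zeta(k_1-1)$;
\item for $k_1=2$ and $k_s\ge2$ with $s\ge2$ it gives the bound $\zeta^\star(k_1,\ldots,k_{s-1},k_s-1)$.
\end{itemize}
The limit then exceeds $1$ because the sequence increases and every term exceeds $1$. Your appeal to \cite{lit} is a legitimate substitute, but the paper's proof does not need it.
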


\begin{proof}
For $|z|<1$, \eqref{Abound} and dominated convergence give \eqref{Hinftyseries}.  We next prove local uniform boundedness on $\mathcal{D}$.  Let $\mathrm{Re}\,z\leq 1-\varepsilon$ with $\varepsilon>0$.  If $0<\varepsilon<1$, then for $0\leq t\leq1$ one has $|1-zt|\geq1-(1-\varepsilon)t$.  After replacing every original denominator by its lower bound, insert the missing intermediate-product factors.  Each inserted factor lies in $[\varepsilon,1]$, so this can only decrease the denominator and hence increase the majorant.  Thus the absolute value of \eqref{cubeint} is at most
\[
\int_{[0,1]^N}
\frac{dt_1\cdots dt_N}
{[1-(1-\varepsilon)t_1]
 [1-(1-\varepsilon)t_1t_2]\cdots
 [1-(1-\varepsilon)t_1\cdots t_N]},
\qquad N=k_1+\cdots+k_r.
\]
The displayed integral is $H_{\{1\}^{N}}(1-\varepsilon)$.  Its coefficient of $(1-\varepsilon)^n$ is $A_{\{1\}^{N}}(n)$, which is at most $1$ by Lemma \ref{lim}, $(i)$ (and equals $1$ when $n=0$).  Hence
\[
|H_{k_1,\ldots,k_r}(z)|
\leq H_{\{1\}^{N}}(1-\varepsilon)
\leq\sum_{n=0}^{+\infty}(1-\varepsilon)^n=\frac1\varepsilon.
\]
For $\varepsilon\geq 1$, the bound $1$ is immediate.  Hence the family is locally uniformly bounded on $\mathcal{D}$.  Since it converges on the open unit disc, the Vitali--Porter theorem \cite{sch} gives locally uniform convergence throughout $\mathcal{D}$.

It remains to consider $z=1$.  The sequence $\zeta^{\star}(k_1,\ldots,k_r)$ is increasing.  If $k_1\geq 3$, then
\[
\zeta^{\star}(k_1,\ldots,k_r)
\leq\sum_{m=1}^{+\infty}\frac{m}{m^{k_1}}
=\zeta(k_1-1).
\]
If $k_1=2$ and $k_s\geq 2$ for some $s\geq 2$, then, for $r\geq s$ and after summing the tail by Lemma \ref{lim},
\[
\zeta^{\star}(k_1,\ldots,k_r)
\leq\zeta^{\star}(k_1,\ldots,k_{s-1},k_s-1)<+\infty.
\]
Thus the increasing sequence is bounded and its limit exists.  It is greater than $1$ because every finite partial value is greater than $1$.
\end{proof}

The two exceptional types excluded at $z=1$ are genuine.  If $k_1=1$, the first summation already has harmonic divergence.  For the remaining exceptional index, Lemma \ref{lim}, $(ii)$ gives the self-contained estimate
\[
\zeta^{\star}(2,\{1\}^{r})
=\sum_{m=1}^{+\infty}\frac{S_r(m)}{m^2}.
\]
For every fixed $M$, monotone convergence in the finite sum yields
\[
\liminf_{r\rightarrow+\infty}\zeta^{\star}(2,\{1\}^{r})
\geq\sum_{m=1}^{M}\frac1m.
\]
Letting $M\rightarrow+\infty$ proves that the sequence attached to $(2,1,1,\ldots)$ tends to $+\infty$.

We shall use the following special limits repeatedly.  As usual, $\{1\}^{r}$ denotes a string of $r$ ones.

\begin{prop}\label{speciallimits}
All limits below are locally uniform on $\mathcal{D}$.

$(i)$
\[
\lim_{r\rightarrow+\infty}H_{\{1\}^{r}}(z)=\frac1{1-z}.
\]

$(ii)$ For a finite nonempty index $(k_1,\ldots,k_s)$,
\[
\lim_{r\rightarrow+\infty}
H_{k_1,\ldots,k_{s-1},k_s+1,\{1\}^{r}}(z)
=H_{k_1,\ldots,k_s}(z).
\]

$(iii)$ For a finite nonempty index ${\bf k}$,
\[
\lim_{q\rightarrow+\infty}H_{{\bf k},q}(z)=H_{\bf k}(z).
\]
\end{prop}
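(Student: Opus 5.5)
All three limits will be handled by one scheme. First I prove coefficientwise convergence of the Taylor coefficients $A_{\bullet}(n)$ at $0$. This gives convergence on the open unit disc. Next I observe that the families are locally uniformly bounded on $\mathcal{D}$, exactly as in the proof of Proposition~\ref{exi}: for $\mathrm{Re}\,z\le 1-\varepsilon$ every $H_{\bf k}$ with finite ${\bf k}$ is bounded by $1/\varepsilon$, uniformly in the index. Finally I invoke the Vitali--Porter theorem to upgrade convergence on the disc to locally uniform convergence on $\mathcal{D}$. Since all coefficients lie in $[0,1]$ by \eqref{Abound} and Lemma~\ref{lim}, coefficientwise convergence implies locally uniform convergence on $|z|<1$ by dominated convergence. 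So the whole matter reduces to computing coefficient limits.

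For $(i)$, \eqref{Adef} gives
\[
A_{\{1\}^r}(n)=\frac{1}{n+1}\sum_{n+1\ge n_2\ge\cdots\ge n_r\ge1}\frac{1}{n_2\cdots n_r}=\frac{S_{r-1}(n+1)}{n+1}.
\]
By Lemma~\ref{lim}$(ii)$ this tends to $1$, which is the $n$-th coefficient of $1/(1-z)$. For $n=0$ the coefficient is identically $1$.

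For $(ii)$, write the coefficient of $H_{k_1,\ldots,k_s+1,\{1\}^r}$ as a sum over $n+1=n_1\ge n_2\ge\cdots\ge n_s$. The term indexed by $(n_1,\dots,n_s)$ carries the factor
\[
\frac{1}{n_1^{k_1}\cdots n_{s-1}^{k_{s-1}}\,n_s^{k_s+1}}\,S_r(n_s).
\]
This holds because the tail $n_s\ge m_1\ge\cdots\ge m_r\ge1$ with weights $1/m_j$ sums to $S_r(n_s)$; when $s=1$ the outer variable is $n_1=n+1$ itself. By Lemma~\ref{lim}$(ii)$, $S_r(n_s)\to n_s$. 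The sum is finite for fixed $n$, so the limit is the same sum with $n_s^{k_s+1}$ replaced by $n_s^{k_s}$, which is $A_{k_1,\ldots,k_s}(n)$.

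For $(iii)$, with ${\bf k}=(k_1,\ldots,k_s)$, the coefficient of $H_{{\bf k},q}$ equals that of $H_{\bf k}$ except that each inner tuple receives an extra factor $\sum_{m=1}^{n_s}m^{-q}$. As $q\to\infty$ this tends to $1$, since only $m=1$ survives. Again the sum is finite for fixed $n$, so $A_{{\bf k},q}(n)\to A_{\bf k}(n)$.

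The only point requiring care is the uniform boundedness on $\mathcal{D}$, which needs the bound to be independent of the depth and the weights. The estimate in Proposition~\ref{exi} already gives $|H_{\bf k}(z)|\le H_{\{1\}^N}(1-\varepsilon)\le 1/\varepsilon$ for any finite ${\bf k}$ of weight $N$. The constant does not depend on $N$, so the same bound applies to all three families, and Vitali--Porter then completes the argument.
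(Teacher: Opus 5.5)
Your proposal is correct and follows essentially the same route as the paper: coefficientwise limits via Lemma~\ref{lim}$(ii)$ (with the tail of ones summing to $S_r(n_s)\to n_s$), domination by $\sum|z|^n$ on the disc, the index-independent bound $1/\varepsilon$ from the proof of Proposition~\ref{exi}, and then Vitali--Porter. The only cosmetic difference is in $(iii)$, where you track the factor $\sum_{m=1}^{n_s}m^{-q}\to1$; the paper instead looks at the difference series, obtained by restricting the last summation variable to values at least $2$.
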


\begin{proof}
It suffices first to work in $|z|<1$ and then apply local uniform boundedness and the Vitali--Porter theorem.  For $(i)$, Lemma \ref{lim}, $(ii)$, gives
\[
\lim_{r\rightarrow+\infty}A_{\{1\}^{r}}(n)=1,
\]
so the limit is $\sum_{n\geq 0}z^n=(1-z)^{-1}$.

For $(ii)$, fix the first $s$ summation variables.  The infinite tail of ones contributes the factor
\[
\lim_{r\rightarrow+\infty}
\sum_{n_s\geq n_{s+1}\geq\cdots\geq n_{s+r}\geq 1}
\frac1{n_{s+1}\cdots n_{s+r}}=n_s
\]
by Lemma \ref{lim}, $(ii)$.  This cancels one power of $n_s$.

For $(iii)$, the difference $H_{{\bf k},q}-H_{\bf k}$ is obtained by restricting the last summation variable to values at least $2$.  For every $n$ its coefficient tends to zero as $q\rightarrow+\infty$, while for $q\geq1$ it is bounded by $A_{({\bf k},1)}(n)-A_{\bf k}(n)\leq1$.  Hence the difference series is dominated by $\sum_{n\geq0}|z|^n$ on every smaller disc.  Dominated convergence gives the result for $|z|<1$, and Vitali--Porter completes the proof.
\end{proof}

For a finite prefix ${\bf p}=(p_1,\ldots,p_r)$ and an infinite extension ${\bf k}=({\bf p},k_{r+1},\ldots)$, the next estimate controls the influence of the tail.

\begin{lem}\label{tailosc}
Let $m=n+1$.  Then
\begin{equation}\label{tailoscineq}
0\leq A_{\bf k}(n)-A_{\bf p}(n)
\leq 1-A_{\{1\}^{r}}(n).
\end{equation}
In particular, the oscillation of $A_{\bf k}(n)$ among all infinite extensions of a fixed prefix of length $r$ tends to zero as $r\rightarrow+\infty$, uniformly in the prefix.
\end{lem}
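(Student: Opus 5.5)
Throughout, $m=n+1$, and $\mathbf k=(\mathbf p,k_{r+1},k_{r+2},\ldots)$ with $\mathbf p=(p_1,\ldots,p_r)$. The plan is to write both coefficients as weighted counts of chains $m=n_1\geq n_2\geq\cdots$, and to compare them term by term with the all-ones case, where Lemma \ref{lim} gives exact values.

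\textbf{Step 1: rewrite the coefficients.} By \eqref{Adef} and \eqref{Ainfty}, for every $R\geq r$,
\[
A_{k_1,\ldots,k_R}(n)=\sum_{m=n_1\geq\cdots\geq n_R\geq1}\prod_{j=1}^{R}n_j^{-k_j}.
\]
Split each chain according to whether $n_{r+1}=\cdots=n_R=1$.

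\textbf{Step 2: lower bound.} The chains with $n_{r+1}=\cdots=n_R=1$ reproduce exactly $A_{\mathbf p}(n)$. All other terms are nonnegative. Hence $A_{k_1,\ldots,k_R}(n)\geq A_{\mathbf p}(n)$, and letting $R\to\infty$ gives the left inequality.

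\textbf{Step 3: upper bound.}
\begin{itemize}
\item Fix $R$. The difference $A_{k_1,\ldots,k_R}(n)-A_{\mathbf p}(n)$ is the sum over chains with $n_{r+1}\geq2$; the condition is equivalent to this because the chain is nonincreasing.
\item Replacing every exponent by $1$ can only enlarge each term, since all $n_j\geq1$. So the difference is at most the corresponding sum with all exponents $1$.
\item That all-ones sum is $S_{R-1}'-S_{r-1}'$, where
\[
S_q'(m):=\sum_{m\geq n_2\geq\cdots\geq n_{q+1}\geq1}\frac{1}{n_2\cdots n_{q+1}}
\]
is the inner all-ones sum, weighted by $1/m$. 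In other words, the all-ones sum equals $A_{\{1\}^{R}}(n)-A_{\{1\}^{r}}(n)$.
\item By Lemma \ref{lim}(ii), $A_{\{1\}^{R}}(n)=S_{R-1}(m)/m$ increases to $1$ (with the convention $S_0(m)=1$).
\end{itemize}
Therefore
\[
A_{k_1,\ldots,k_R}(n)-A_{\mathbf p}(n)\leq 1-A_{\{1\}^{r}}(n),
\]
and letting $R\to\infty$ yields \eqref{tailoscineq}.

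\textbf{Step 4: oscillation.} Two infinite extensions of the same prefix differ in $A(n)$ by at most $1-A_{\{1\}^r}(n)$. This bound does not depend on the prefix, and it tends to $0$ as $r\to\infty$ by Lemma \ref{lim}(ii). This gives the uniformity claim.

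The main point needing care is Step 3: one must check that the "tail" chains are exactly those counted by the difference of the all-ones sums at levels $R$ and $r$. This works because the all-ones sum at level $r$ corresponds precisely to the chains with trailing ones beyond position $r$. The rest is monotone limits.
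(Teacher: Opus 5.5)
Your proof is correct and is essentially the paper's argument: both replace all exponents by $1$ and use Lemma \ref{lim}(ii) to see that the all-ones total is at most $1$. The only difference is bookkeeping. You compare chains at a finite depth $R$, which gives the bound $A_{\{1\}^R}(n)-A_{\{1\}^r}(n)$, avoids the paper's separate case $r=1$, and you then let $R\to\infty$; the paper instead bounds the infinite tail directly by $n_r$ and uses $\frac1m\sum\frac{n_r}{n_2\cdots n_r}=1$. One notational slip: your auxiliary $S'_q$ is missing the factor $1/m$, but the identity you actually use, expressed through $A_{\{1\}^R}(n)-A_{\{1\}^r}(n)$, is right.
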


\begin{proof}
If $r=1$, Lemma \ref{lim} bounds the tail contribution between $1$ and $m$, and therefore
\[
0\leq A_{\bf k}(n)-A_{(p_1)}(n)
\leq\frac{m-1}{m^{p_1}}
\leq\frac{m-1}{m}=1-A_{(1)}(n).
\]
Assume now that $r\geq2$.  For fixed $m\geq n_2\geq\cdots\geq n_r$, the infinite tail contributes a number between $1$ and $n_r$, by Lemma \ref{lim}.  Therefore
\[
\begin{split}
0\leq A_{\bf k}(n)-A_{\bf p}(n)
&\leq\frac1m
\sum_{m\geq n_2\geq\cdots\geq n_r\geq 1}
\frac{n_r-1}{n_2\cdots n_r}.
\end{split}
\]
Now
\[
\frac1m
\sum_{m\geq n_2\geq\cdots\geq n_r\geq 1}
\frac{n_r}{n_2\cdots n_r}=1,
\]
whereas the same sum with $n_r$ replaced by $1$ is $A_{\{1\}^{r}}(n)$.  This proves \eqref{tailoscineq}.  The last assertion follows from Proposition \ref{speciallimits}, $(i)$.
\end{proof}

\section{Completely monotone sequences and univalent functions}\label{sec:cm}
We now place the functions of Section \ref{sec:limits} in the Hausdorff--Stieltjes class.  Besides proving Theorem \ref{cm}, the constructions in this section provide the representing densities used in the strict separation argument.

\begin{Def}
Let $\Delta x_n=x_n-x_{n+1}$.  A sequence $\{x_n\}_{n\geq 0}$ of nonnegative real numbers is called completely monotone if
\[
\Delta^m x_n\geq 0,
\qquad m,n\geq 0.
\]
\end{Def}

We recall the classical theorem of Hausdorff \cite{hau,wid}.

\begin{Thm}[Hausdorff]\label{hausdorff}
A sequence $\{x_n\}_{n\geq 0}$ is completely monotone if and only if there is a finite nonnegative Borel measure $\mu$ on $[0,1]$ such that
\[
x_n=\int_{[0,1]}t^n\,d\mu(t),
\qquad n\geq 0.
\]
If $x_0=1$, then $\mu$ is a probability measure.
\end{Thm}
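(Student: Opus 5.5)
The statement immediately above is Hausdorff's classical moment theorem, which the paper quotes from \cite{hau,wid}; we would give the standard proof. The easy direction is a direct computation. If $x_n=\int_{[0,1]}t^n\,d\mu(t)$, then induction on $m$ gives
\[
\Delta^m x_n=\int_{[0,1]}t^n(1-t)^m\,d\mu(t)\geq 0 .
\]
Nonnegativity of $x_n$ is the case $m=0$. If $x_0=1$, then $\mu([0,1])=x_0=1$, so $\mu$ is a probability measure.

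For the converse, the first step is to expand $x_0$ with respect to the operator $\Delta$. Write $x_k=\sum_{j}(-1)^j\binom{N-k}{j}\Delta^jx_k$ style identities, or more concretely the inversion formula
\[
x_0=\sum_{k=0}^{N}\binom{N}{k}\Delta^{N-k}x_k ,
\]
which follows from $1=((1-E)+E)^N$, where $E$ is the shift operator and $\Delta=1-E$. Every term $w_{N,k}:=\binom{N}{k}\Delta^{N-k}x_k$ is nonnegative, and the terms sum to $x_0$. This lets us define a discrete measure
\[
\mu_N=\sum_{k=0}^N w_{N,k}\,\delta_{k/N}
\]
on $[0,1]$, with total mass $x_0$.

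The second step is to compute the moments of $\mu_N$. Applying the same operator identity to $E^n$ gives
\[
x_n=\sum_{k}\binom{N-n}{k-n}\Delta^{N-k}x_k .
\]
Comparing coefficients, we get
\[
\int t^n\,d\mu_N=\sum_k w_{N,k}\Big(\frac kN\Big)^n ,
\]
and this differs from
\[
\sum_k w_{N,k}\,\frac{k(k-1)\cdots(k-n+1)}{N(N-1)\cdots(N-n+1)}=x_n
\]
by at most $x_0$ times $\sup_k\big|(k/N)^n-\frac{(k)_n}{(N)_n}\big|$. That supremum is $O(n^2/N)$ for fixed $n$. Hence $\int t^n\,d\mu_N\to x_n$ as $N\to\infty$, for every $n$.

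The last step is a compactness argument, and it is the only genuinely analytic step. The measures $\mu_N$ all have mass $x_0$ on the compact set $[0,1]$, so by Helly's selection theorem (or Banach--Alaoglu together with the Riesz representation theorem) some subsequence converges weakly to a finite nonnegative Borel measure $\mu$ on $[0,1]$. Since $t^n$ is continuous on $[0,1]$, passing to the limit gives $x_n=\int t^n\,d\mu$. The measure $\mu$ is unique, because polynomials are dense in $C[0,1]$, though uniqueness is not needed here.

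The main obstacle is the bookkeeping in the second step: the binomial identity $\binom{N}{k}\binom{k}{n}=\binom{N}{n}\binom{N-n}{k-n}$ has to be handled carefully so that the approximating moments match exactly in falling-factorial form. Once that identity is in place, the passage from falling factorials to powers is a routine uniform estimate.
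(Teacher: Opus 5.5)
The paper gives no proof of this theorem; it quotes it from Hausdorff and Widder. Your argument is a correct version of the standard proof: point masses $\binom{N}{k}\Delta^{N-k}x_k$ at $k/N$, the factorial-moment identity $\sum_k w_{N,k}\binom{k}{n}=\binom{N}{n}x_n$, and Helly selection. This is essentially the route of those sources, with two cosmetic fixes: the moment formula comes from expanding $(\Delta+E)^{N-n}=1$ applied to $E^n x$ rather than from literally ``the same identity'', and the garbled first formula in your converse direction should simply be deleted.
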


For a nonnegative integrable function $u$ on $(0,1)$, define
\begin{equation}\label{MCdef}
(\mathsf{M}u)(t)=\int_t^1\frac{u(v)}{v}\,dv,
\qquad
(\mathsf{C}u)(t)=\int_0^t\frac{u(v)}{1-v}\,dv.
\end{equation}
Tonelli's theorem gives, for every $n\geq 0$,
\begin{align}
\int_0^1t^n(\mathsf{M}u)(t)\,dt
&=\frac1{n+1}\int_0^1t^nu(t)\,dt,\label{Mmoment}\\
\int_0^1t^n(\mathsf{C}u)(t)\,dt
&=\frac1{n+1}\sum_{j=0}^{n}\int_0^1t^ju(t)\,dt.\label{Cmoment}
\end{align}
Taking $n=0$ shows that both operators preserve the total integral.

For a nonempty finite index ${\bf k}=(k_1,\ldots,k_r)$, put
\begin{equation}\label{sigmadef}
\sigma_{\bf k}
=\mathsf{M}^{k_1-1}\mathsf{C}\mathsf{M}^{k_2-1}\mathsf{C}\cdots
\mathsf{C}\mathsf{M}^{k_r-1}{\bf 1},
\end{equation}
where no $\mathsf{C}$ occurs when $r=1$, and composition is read from right to left. Here ${\bf 1}$ means the constant function $1$ on $[0,1]$. Every such function is positive and locally absolutely continuous on $(0,1)$.

\begin{prop}\label{momentdensity}
For every nonempty finite index ${\bf k}$ and every $n\geq 0$,
\begin{equation}\label{momentrepresentation}
A_{\bf k}(n)=\int_0^1t^n\sigma_{\bf k}(t)\,dt.
\end{equation}
Consequently, $\sigma_{\bf k}(t)dt$ is a probability measure, the sequence $\{A_{\bf k}(n)\}_{n\geq 0}$ is completely monotone, and
\begin{equation}\label{StieltjesH}
H_{\bf k}(z)=\int_0^1\frac{\sigma_{\bf k}(t)}{1-tz}\,dt,
\qquad z\in\Lambda.
\end{equation}
\end{prop}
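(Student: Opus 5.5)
Write $\mu_u(n)=\int_0^1t^nu(t)\,dt$ for the moments of a nonnegative integrable $u$. The plan is to prove \eqref{momentrepresentation} by induction on the length of the word $\mathsf{M}^{k_1-1}\mathsf{C}\cdots\mathsf{C}\mathsf{M}^{k_r-1}$, reading it from the right. Each application of $\mathsf{M}$ or $\mathsf{C}$ will be matched with one step in building the nested sum in \eqref{Adef}. To make the induction close, I will generalize the claim slightly. For a finite index ${\bf k}=(k_1,\ldots,k_r)$ and $m\geq1$, put
\[
B_{\bf k}(m)=\sum_{m\geq n_2\geq\cdots\geq n_r\geq1}\frac{1}{n_2^{k_2}\cdots n_r^{k_r}},
\]
so that $A_{\bf k}(n)=(n+1)^{-k_1}B_{\bf k}(n+1)$.

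\emph{Base case and the $\mathsf{M}$-step.} For $r=1$ we have $\sigma_{(k_1)}=\mathsf{M}^{k_1-1}{\bf 1}$. Since $\mu_{\bf 1}(n)=1/(n+1)$, applying \eqref{Mmoment} $k_1-1$ times gives $\mu_{\sigma}(n)=(n+1)^{-k_1}=A_{(k_1)}(n)$. More generally, \eqref{Mmoment} says that $\mathsf{M}$ multiplies the $n$-th moment by $1/(n+1)$. Hence, if $u$ has moments $\mu_u(n)=(n+1)^{-1}c(n+1)$, then $\mathsf{M}^{j}u$ has moments $(n+1)^{-1-j}c(n+1)$.

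\emph{The $\mathsf{C}$-step.} Suppose $u=\sigma_{(k_2,\ldots,k_r)}$ and, by induction, $\mu_u(j)=A_{(k_2,\ldots,k_r)}(j)=(j+1)^{-k_2}B_{(k_2,\ldots,k_r)}(j+1)$. By \eqref{Cmoment},
\[
\mu_{\mathsf{C}u}(n)=\frac{1}{n+1}\sum_{j=0}^{n}A_{(k_2,\ldots,k_r)}(j)=\frac{1}{n+1}\sum_{n+1\geq n_2\geq1}\frac{B_{(k_2,\ldots,k_r)}(n_2)}{n_2^{k_2}}=\frac{1}{n+1}B_{\bf k}(n+1).
\]
The last equality holds because summing over $n_2\leq n+1$ is exactly the definition of $B_{\bf k}(n+1)$. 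Applying $\mathsf{M}^{k_1-1}$ then produces $(n+1)^{-k_1}B_{\bf k}(n+1)=A_{\bf k}(n)$, which completes the induction.

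\emph{Integrability and positivity.} The only delicate point is that \eqref{Mmoment} and \eqref{Cmoment} require Tonelli and nonnegative integrable inputs. Since $\mathsf{M}$ and $\mathsf{C}$ preserve total integral, each intermediate function is integrable with integral equal to its $0$-th moment, which is finite. Positivity is preserved by both operators. $\mathsf{C}$ introduces the factor $1/(1-v)$, which could blow up at $1$, so $\mathsf{C}u$ may be unbounded near $1$. This does no harm, because integrability is already guaranteed by \eqref{Cmoment} with $n=0$. I expect this bookkeeping to be the main, if minor, obstacle.

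\emph{Consequences.} Taking $n=0$ gives $\int_0^1\sigma_{\bf k}=A_{\bf k}(0)=1$, so $\sigma_{\bf k}(t)\,dt$ is a probability measure. Complete monotonicity follows directly, since $\Delta^mA_{\bf k}(n)=\int_0^1 t^n(1-t)^m\sigma_{\bf k}(t)\,dt\geq0$; alternatively one may invoke Theorem \ref{hausdorff}. For \eqref{StieltjesH}, first take $|z|<1$: expanding $1/(1-tz)$ as a geometric series and integrating term by term (dominated by $\sum|z|^n$) gives $\sum_n A_{\bf k}(n)z^n=H_{\bf k}(z)$. The right-hand side of \eqref{StieltjesH} is holomorphic on $\Lambda$, because $|1-tz|$ is bounded below on compact subsets of $\Lambda$, uniformly in $t\in[0,1]$. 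Proposition \ref{equ} says $H_{\bf k}$ is also holomorphic on $\Lambda$. Since $\Lambda$ is connected, the identity theorem extends the equality from the disc to all of $\Lambda$.
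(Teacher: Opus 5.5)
Your proof is correct and follows essentially the same route as the paper. The paper also argues by induction over the operator word using \eqref{Mmoment} and \eqref{Cmoment}: your $\mathsf{M}$-step is the recursion \eqref{ArecM}, your $\mathsf{C}$-step computation is exactly \eqref{ArecC}, and the Stieltjes formula comes from summing the moments on the disc and then continuing analytically. The differences are cosmetic: your auxiliary $B_{\bf k}$ bookkeeping, your explicit integrability check, and your direct verification $\Delta^mA_{\bf k}(n)=\int_0^1t^n(1-t)^m\sigma_{\bf k}(t)\,dt\geq 0$ in place of citing Hausdorff's theorem.
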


\begin{proof}
If $p\geq 2$, then directly from \eqref{Adef},
\begin{equation}\label{ArecM}
A_{(p,{\boldsymbol\tau})}(n)
=\frac1{n+1}A_{(p-1,{\boldsymbol\tau})}(n).
\end{equation}
If ${\boldsymbol\tau}$ is nonempty, changing the outer variable in the inner sum gives
\begin{equation}\label{ArecC}
A_{(1,{\boldsymbol\tau})}(n)
=\frac1{n+1}\sum_{j=0}^{n}A_{\boldsymbol\tau}(j).
\end{equation}
Starting with $A_{(1)}(n)=1/(n+1)=\int_0^1t^n dt$, the assertion follows inductively from \eqref{Mmoment} and \eqref{Cmoment}.  Since $A_{\bf k}(0)=1$, the density has total mass $1$.  The Hausdorff theorem proves complete monotonicity, and summing the moments for $|z|<1$, followed by analytic continuation, gives \eqref{StieltjesH}.
\end{proof}

\begin{Cor}\label{boundaryvalues}
For every nonempty finite index ${\bf k}$, the sequence $A_{\bf k}(n)$ decreases to $0$.  If $|z|=1$ and $z\neq1$, then the series in \eqref{Hseries} converges and its sum equals the boundary value of either integral representation \eqref{cubeint} or \eqref{StieltjesH}.
\end{Cor}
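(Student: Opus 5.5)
The plan has two parts: first show that the coefficients decrease to zero, then treat the boundary points $z\neq1$ by an Abel-type argument.

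\emph{Step 1: the coefficients decrease to $0$.} By Proposition \ref{momentdensity}, $A_{\bf k}(n)=\int_0^1 t^n\sigma_{\bf k}(t)\,dt$, where $\sigma_{\bf k}\geq0$ is integrable with total mass $1$. Since $t^{n+1}\leq t^n$ on $[0,1]$, the sequence is nonincreasing; this is also the case $m=1$ of complete monotonicity. Because $t^n\to0$ for every $t<1$ and the set $\{1\}$ has Lebesgue measure zero, dominated convergence with majorant $\sigma_{\bf k}$ gives $A_{\bf k}(n)\to0$.

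\emph{Step 2: convergence of the series on the circle.} Fix $|z|=1$, $z\neq1$. The partial sums $\sum_{j=0}^{N} z^j=(1-z^{N+1})/(1-z)$ are bounded by $2/|1-z|$. Since $A_{\bf k}(n)$ decreases to $0$, Dirichlet's test (summation by parts) shows that $\sum_n A_{\bf k}(n)z^n$ converges. The summation-by-parts estimate in fact bounds the tails uniformly for $z$ in a small compact arc $K$ around the given point that avoids $1$. So the series converges uniformly on $K$, and the same estimate applied to $\sum A_{\bf k}(n)(\rho z)^n$ holds uniformly for $0\leq\rho\leq1$.

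\emph{Step 3: identification with the integral representations.} By Abel's theorem, the sum equals $\lim_{\rho\to1^-}H_{\bf k}(\rho z)$. Both \eqref{cubeint} and \eqref{StieltjesH} define functions that are holomorphic on $\Lambda$, which contains $z$. Hence they are continuous at $z$, and their values there agree with this radial limit, because each coincides with $H_{\bf k}$ on the disc (Propositions \ref{equ} and \ref{momentdensity}). So "the boundary value of either representation" is simply its value at $z\in\Lambda$, and it equals the sum of the series.

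It remains to reconcile this with the convention from the introduction, where boundary values for $k_1=1$ are defined by partial sums in the outer variable $n_1$. Grouping the nested star sum by $n_1=n+1$ produces exactly the terms $A_{\bf k}(n)z^n$, by \eqref{Adef}. So the partial sums in $n_1$ are precisely the partial sums of \eqref{Hseries}, multiplied by $z$ for the unshifted function.

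I expect the only delicate point to be the justification of Abel's theorem. It follows from the classical statement, since the series converges at the point $z$. I would also note that $z\in\Lambda$ because $z\neq1$ and $|z|=1$, so no genuine boundary continuity issue arises.
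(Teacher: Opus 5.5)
Your proposal is correct and follows essentially the paper's proof. The steps are the same: the moment representation with dominated convergence gives $A_{\bf k}(n)\downarrow 0$, Dirichlet's test gives convergence on $S^1\setminus\{1\}$, and Abel's theorem identifies the sum with the radial limit. The only real difference is the last identification: you use continuity at $z\in\Lambda$ of the holomorphic extensions from Proposition \ref{equ}, whereas the paper applies dominated convergence directly to the Stieltjes integral and then passes to the cube integral. The two justifications are equivalent, and your uniform summation-by-parts estimate and the remark on partial sums in $n_1$ are harmless additions.
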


\begin{proof}
The moment representation makes $A_{\bf k}(n)$ nonincreasing, and dominated convergence gives
\[
A_{\bf k}(n)=\int_0^1t^n\sigma_{\bf k}(t)\,dt\rightarrow0,
\]
because $\sigma_{\bf k}(t)dt$ is absolutely continuous and hence has no atom at $1$.  The Dirichlet test now proves convergence on $S^1\setminus\{1\}$.  Abel's theorem identifies the sum with the radial limit of $H_{\bf k}$.  In \eqref{StieltjesH}, the denominator is bounded away from zero for a fixed $z\in S^1\setminus\{1\}$, so dominated convergence identifies that radial limit with the Stieltjes integral and hence, by Proposition \ref{equ}, with the cube integral.
\end{proof}

We shall use two standard results from geometric function theory.  The first is due to Wirths \cite{wir}.

\begin{Thm}[Wirths]\label{wirth}
If $\{x_n\}_{n\geq 0}$ is completely monotone and $x_0=1$, then
\[
f(z)=\sum_{n=0}^{+\infty}x_nz^{n+1}
\]
extends holomorphically to $\Lambda$ and is univalent on $\mathcal{D}$.
\end{Thm}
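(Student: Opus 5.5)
By Hausdorff's theorem (Theorem \ref{hausdorff}) there is a probability measure $\mu$ on $[0,1]$ with $x_n=\int_{[0,1]}t^n\,d\mu(t)$. Summing the geometric series for $|z|<1$ gives
\[
f(z)=\int_{[0,1]}\frac{z\,d\mu(t)}{1-tz}.
\]
For $z$ in a compact subset of $\Lambda$, the factor $1-tz$ is bounded away from $0$ uniformly in $t\in[0,1]$, because $1-tz=0$ with $t\in(0,1]$ forces $z=1/t\in[1,+\infty)$. Differentiation under the integral, exactly as in the proof of Proposition \ref{equ}, then shows that the integral is holomorphic on $\Lambda$ and continues $f$ there.

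For univalence, take $z\neq w$ in $\mathcal{D}$. Since
\[
\frac{z}{1-tz}-\frac{w}{1-tw}=\frac{z-w}{(1-tz)(1-tw)},
\]
it suffices to prove
\[
I(z,w)=\int_{[0,1]}\frac{d\mu(t)}{(1-tz)(1-tw)}\neq 0 .
\]
Put $\zeta=1-z$ and $\omega=1-w$, so that $\mathrm{Re}\,\zeta>0$ and $\mathrm{Re}\,\omega>0$. Write $\alpha=\arg\zeta$ and $\beta=\arg\omega$, both in $(-\pi/2,\pi/2)$. Then
\[
a_t:=1-tz=(1-t)+t\zeta,\qquad b_t:=1-tw=(1-t)+t\omega
\]
are convex combinations of $1$ and $\zeta$ (respectively $1$ and $\omega$). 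Hence $a_t\neq0$ and $\arg a_t$ lies in the closed interval between $0$ and $\alpha$. Likewise $\arg b_t$ lies between $0$ and $\beta$.

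The key step is to show that, as $t$ ranges over $[0,1]$, the argument of $a_tb_t$ stays in a fixed closed interval of length strictly less than $\pi$. I split into two cases.
\begin{itemize}
\item If $\alpha$ and $\beta$ have the same sign, then $\arg(a_tb_t)$ lies between $0$ and $\alpha+\beta$. This interval has length $|\alpha+\beta|<\pi$.
\item If $\alpha$ and $\beta$ have opposite signs (or one of them is $0$), then $\arg(a_tb_t)$ lies between $\min(\alpha,\beta)$ and $\max(\alpha,\beta)$. This interval has length $|\alpha-\beta|<\pi$.
\end{itemize}
In either case all the values $1/(a_tb_t)$ lie in a closed convex sector $\Sigma$ with vertex $0$ and opening angle $<\pi$. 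I would phrase this with the cone generated rather than with branches of $\arg$, to avoid branch bookkeeping.

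Such a sector admits a unit vector $e^{i\phi}$ with $\mathrm{Re}(e^{i\phi}u)\geq c|u|$ for all $u\in\Sigma$ and some $c>0$. Therefore
\[
\mathrm{Re}\big(e^{i\phi}I(z,w)\big)\geq c\int_{[0,1]}\frac{d\mu(t)}{|a_t b_t|}>0 ,
\]
since $\mu$ is a nonzero probability measure and the integrand is strictly positive. Hence $I(z,w)\neq0$, and $f$ is injective on $\mathcal{D}$.

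The only delicate point is the sector estimate, in particular keeping the argument intervals honest when $\alpha$ or $\beta$ is $0$, or when they have opposite signs. Everything else is routine once Hausdorff's representation is in hand.
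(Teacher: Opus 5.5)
Your proof is correct and complete. The paper does not prove this statement; it quotes it from Wirths \cite{wir}. The closest argument in the paper is Lemma \ref{shiftedunivalent}, where univalence of the Stieltjes transform $G$ is deduced from the close-to-convexity criterion, Lemma \ref{closeconvex}. That criterion is Noshiro--Warschawski applied after the convex strip map $g(z)=-\log(1-z)$, and the lemma only needs the pointwise derivative inequality $\mathrm{Re}\big((1-z)G'(z)\big)>0$.

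Your route is genuinely different. You never use a derivative condition. Instead you show directly that the divided difference $\big(f(z)-f(w)\big)/(z-w)=\int_{[0,1]}d\mu(t)/\big((1-tz)(1-tw)\big)$ integrates values lying in a closed sector of opening less than $\pi$, so it cannot vanish.

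What this buys:
\begin{itemize}
\item It is more elementary: there is no Noshiro--Warschawski theorem and no appeal to convexity of $g(\mathcal{D})$.
\item It is uniform. Since $\frac{1}{1-tz}-\frac{1}{1-tw}=\frac{t(z-w)}{(1-tz)(1-tw)}$, the same sector argument applied to the measure $t\,d\mu(t)$ also proves Lemma \ref{shiftedunivalent}. Nonvanishing of that measure is exactly the hypothesis $\int t\,d\mu>0$ there. So all the univalence assertions of Theorem \ref{cm} follow from your single observation.
\end{itemize}

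What the derivative route buys is a stronger structural statement: $f$ is close-to-convex relative to $-\log(1-z)$ on $\mathcal{D}$. Your argument-bookkeeping gives this as well at no extra cost. Indeed, $\arg\frac{1-z}{(1-tz)^2}=\alpha-2\arg a_t$ lies in $[-|\alpha|,|\alpha|]\subset(-\pi/2,\pi/2)$. Hence $\mathrm{Re}\big((1-z)f'(z)\big)=\mathrm{Re}\int_{[0,1]}\frac{1-z}{(1-tz)^2}\,d\mu(t)>0$, and Lemma \ref{closeconvex} would apply directly.
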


\begin{lem}\label{closeconvex}
Let $G$ be a domain and let $f,g$ be holomorphic on $G$.  Suppose that $g$ is univalent, $g(G)$ is convex, and
\[
\mathrm{Re}\,\frac{f'(z)}{g'(z)}>0,
\qquad z\in G.
\]
Then $f$ is univalent on $G$.
\end{lem}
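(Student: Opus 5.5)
This is the Noshiro--Warschawski--Wolff argument. Take two points $z_1\neq z_2$ in $G$. We want to show $f(z_1)\neq f(z_2)$.

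\emph{Step 1 (transfer to the convex image).} Since $g$ is univalent, $\Omega=g(G)$ is a convex domain and $g^{-1}:\Omega\to G$ is holomorphic. Put
\[
F=f\circ g^{-1},
\]
which is holomorphic on $\Omega$. By the chain rule,
\[
F'(w)=\frac{f'(z)}{g'(z)},\qquad w=g(z).
\]
The quotient makes sense because a univalent function has nonvanishing derivative. The hypothesis therefore becomes $\mathrm{Re}\,F'>0$ on $\Omega$. Since $f=F\circ g$ and $g$ is injective, it suffices to prove that $F$ is injective on $\Omega$.

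\emph{Step 2 (integrate along a segment).} Set $w_j=g(z_j)$, so $w_1\neq w_2$. By convexity, the segment $w_1+s(w_2-w_1)$, $s\in[0,1]$, lies in $\Omega$. Integrating $F'$ along it gives
\[
\frac{F(w_2)-F(w_1)}{w_2-w_1}=\int_0^1F'\bigl(w_1+s(w_2-w_1)\bigr)\,ds .
\]
The integrand is continuous with positive real part on the compact parameter interval. Hence the right-hand side has strictly positive real part and in particular is nonzero. This gives $F(w_2)\neq F(w_1)$, and therefore $f(z_1)\neq f(z_2)$.

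The only delicate point is Step 1. We must check that $g'$ never vanishes, so that the hypothesis is meaningful, and that $g^{-1}$ is holomorphic. Both follow from the standard fact that an injective holomorphic map is locally biholomorphic. After that, convexity is used exactly once, to guarantee the segment stays in $\Omega$. I expect no real obstacle.
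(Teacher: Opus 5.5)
Your proposal is correct and takes the same route the paper indicates: pass to $f\circ g^{-1}$ on the convex domain $g(G)$ and apply the Noshiro--Warschawski theorem. The only difference is that you prove that theorem inline by integrating $F'$ along the segment, whereas the paper cites it.
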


This is the usual close-to-convexity criterion; see Section 2.4.1 in  \cite{GK}.  It follows by applying the Noshiro--Warschawski theorem to $f\circ g^{-1}$ on the convex domain $g(G)$.

\begin{lem}\label{shiftedunivalent}
Let $\mu$ be a probability measure on $[0,1]$ with
\[
\int_{[0,1]}t\,d\mu(t)>0,
\]
and set
\[
G(z)=\int_{[0,1]}\frac{d\mu(t)}{1-tz}.
\]
Then $G$ is univalent on $\mathcal{D}$.
\end{lem}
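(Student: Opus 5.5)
The plan is to apply the close-to-convexity criterion, Lemma \ref{closeconvex}, on $G=\mathcal{D}$ with the comparison function
\[
g(z)=-\log(1-z),
\]
using the principal branch. For $z\in\mathcal{D}$ put $w=1-z$, so that $\mathrm{Re}\,w>0$. Then $g$ is holomorphic and univalent on $\mathcal{D}$, because $z\mapsto 1-z$ maps $\mathcal{D}$ onto the right half-plane and $\log$ is injective there. Its image is the horizontal strip $\{|\mathrm{Im}\,\zeta|<\pi/2\}$, which is convex. Also $g'(z)=1/(1-z)$.

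Next I differentiate under the integral sign, which is justified as in Proposition \ref{equ} because $1-tz$ stays bounded away from $0$ locally uniformly on $\mathcal{D}$ for $t\in[0,1]$. This gives
\[
G'(z)=\int_{[0,1]}\frac{t\,d\mu(t)}{(1-tz)^2},
\qquad
\frac{G'(z)}{g'(z)}=\int_{[0,1]}t\,\frac{w}{\bigl((1-t)+tw\bigr)^2}\,d\mu(t).
\]
The whole task is therefore to show that the integrand has positive real part whenever $t>0$.

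For the pointwise estimate, write $\theta=\arg w\in(-\pi/2,\pi/2)$. For $t\in(0,1]$ the point $(1-t)+tw$ is a convex combination of $1$ and $w$. Both lie in the open right half-plane, so its argument $\varphi$ lies between $0$ and $\theta$ and has the same sign as $\theta$ (or is $0$). Hence
\[
\arg\frac{w}{((1-t)+tw)^2}=\theta-2\varphi
\]
lies in the closed interval between $-\theta$ and $\theta$, so its absolute value is at most $|\theta|<\pi/2$. Consequently the integrand $t\,w/((1-t)+tw)^2$ has strictly positive real part for every $t\in(0,1]$, and it vanishes at $t=0$.

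Integrating against $\mu$ then gives $\mathrm{Re}\,G'(z)/g'(z)\geq 0$. Strict positivity holds because $\int t\,d\mu>0$ forces $\mu((0,1])>0$, and on that set the integrand has positive real part. Lemma \ref{closeconvex} then yields univalence of $G$ on $\mathcal{D}$.

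The only delicate step is choosing the comparison function. With $g(z)=z/(1-z)$ the argument of the integrand can reach nearly $\pm\pi$, so that choice fails. The logarithm is exactly balanced: the single factor $w$ in the numerator against the square in the denominator keeps the argument inside $(-\pi/2,\pi/2)$. I expect no further obstacle beyond checking that the argument bound is valid for the principal branch, which holds since all points involved lie in the right half-plane.
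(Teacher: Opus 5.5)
Your proof is correct and follows the paper's route. It uses the same comparison function $g(z)=-\log(1-z)$ with its convex strip image, the close-to-convexity criterion of Lemma \ref{closeconvex}, and the same final integration step using $\mu((0,1])>0$. The only difference is in how the kernel positivity $\mathrm{Re}\,\bigl(t(1-z)/(1-tz)^2\bigr)>0$ is checked: you argue geometrically via $1-tz=(1-t)+t(1-z)$ and an argument bound, whereas the paper computes the real part explicitly in \eqref{positivekernel}.
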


\begin{proof}
The function
\[
g(z)=-\log(1-z)
\]
is univalent on $\mathcal{D}$ and maps $\mathcal{D}$ onto the horizontal strip $|\mathrm{Im}\,w|<\pi/2$, which is convex.  Since $g'(z)=(1-z)^{-1}$, it is enough by Lemma \ref{closeconvex} to prove
\[
\mathrm{Re}\big((1-z)G'(z)\big)>0.
\]
Write $z=a+ib$, where $a<1$.  For $0<t\leq 1$, a direct calculation gives
\begin{equation}\label{positivekernel}
\mathrm{Re}\left(\frac{t(1-z)}{(1-tz)^2}\right)
=
\frac{t\big((1-a)(1-ta)^2+t b^2[2-(1+a)t]\big)}
{\big((1-ta)^2+t^2b^2\big)^2}>0.
\end{equation}
Indeed, $1-a>0$, and $2-(1+a)t>0$: if $a\geq-1$ its minimum on $[0,1]$ is $1-a$, while if $a<-1$ its minimum is $2$.  Integrating \eqref{positivekernel} against $\mu$ proves the assertion, because $\mu((0,1])>0$.
\end{proof}

\begin{proof}[Proof of Theorem \ref{cm}]
Part $(i)$ is Proposition \ref{momentdensity}.  Part $(ii)$ follows from Theorem \ref{wirth}, applied to $x_n=A_{\bf k}(n)$, because
\[
\mathrm{Li}^{\star}_{\bf k}(z)=zH_{\bf k}(z)
=\sum_{n=0}^{+\infty}A_{\bf k}(n)z^{n+1}.
\]
Part $(iii)$ follows from Lemma \ref{shiftedunivalent} and \eqref{StieltjesH}.

For an infinite index ${\bf k}$, every finite difference $\Delta^m A_{k_1,\ldots,k_r}(n)$ is nonnegative.  Passing to the limit in $r$ shows that $\{A_{\bf k}(n)\}_{n\geq 0}$ is completely monotone.  By Theorem \ref{hausdorff}, there is a probability measure $\mu_{\bf k}$ on $[0,1]$ such that
\begin{equation}\label{infiniteStieltjes}
A_{\bf k}(n)=\int_{[0,1]}t^n\,d\mu_{\bf k}(t),
\qquad
H_{\bf k}(z)=\int_{[0,1]}\frac{d\mu_{\bf k}(t)}{1-tz}.
\end{equation}
The second equality first holds on the unit disc and then on $\Lambda$ by the identity theorem.  Moreover, the term with outer summation variable $2$ and all later variables equal to $1$ gives
\[
A_{\bf k}(1)\geq 2^{-k_1}>0.
\]
Hence Lemma \ref{shiftedunivalent} applies to $H_{\bf k}$, and Theorem \ref{wirth} applies to $zH_{\bf k}$.  The locally uniform convergence asserted in $(iv)$ was proved in Proposition \ref{exi}.  This completes the proof.
\end{proof}

\section{Strict separation of the unit circle curves}\label{sec:separation}
For the rest of the paper it is convenient to write
\[
F_{\bf k}(z)=H_{\bf k}(z)-1.
\]
For finite indices, this is the series beginning with the first nonconstant coefficient,
\[
F_{\bf k}(z)=\sum_{n=1}^{+\infty}A_{\bf k}(n)z^n.
\]
We first establish the analytic closure properties needed to compare two such functions.

Let $\mathfrak{T}$ denote the class of all functions
\begin{equation}\label{Tclass}
h(z)=\int_{[0,1]}\frac{d\mu(t)}{1-tz},
\end{equation}
where $\mu$ is a probability measure on $[0,1]$.  Thus $h(0)=1$.  Also,
\begin{equation}\label{atomzero}
\lim_{x\rightarrow+\infty}h(-x)=\mu(\{0\}).
\end{equation}
The class $\mathfrak{T}$ is closed under locally uniform convergence on $\Lambda$.  Indeed, the representing probability measures are weakly compact, and every weakly convergent subsequence gives the same analytic limit in \eqref{Tclass}.

\begin{lem}\label{Stieltjescriterion}
Suppose that $h$ is holomorphic on $\Lambda$, $h(0)=1$, $h(x)>0$ for $x<1$, and
\[
\mathrm{Im}\,h(z)\geq 0
\]
whenever $\mathrm{Im}\,z>0$.  If
\begin{equation}\label{growthcriterion}
\frac{h(iy)}{iy}\rightarrow 0
\qquad (y\rightarrow+\infty),
\end{equation}
then $h\in\mathfrak{T}$.
\end{lem}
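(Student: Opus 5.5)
The approach is the classical Nevanlinna (Pick) representation, followed by the substitution $s=1/t$. The point is that the hypotheses make $h$ a Pick function whose representing measure lives on $[1,+\infty)$ and has finite first inverse moment.

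\emph{Step 1: Nevanlinna representation.} Restrict $h$ to the upper half-plane. Since $\mathrm{Im}\,h\geq 0$ there, the Herglotz--Nevanlinna theorem gives real $a$, $b\geq 0$ and a positive Borel measure $\nu$ on $\mathbb{R}$ with $\int d\nu(s)/(1+s^2)<\infty$ such that
\[
h(z)=a+bz+\int_{\mathbb{R}}\Big(\frac{1}{s-z}-\frac{s}{1+s^2}\Big)\,d\nu(s),\qquad \mathrm{Im}\,z>0.
\]
It is standard that $b=\lim_{y\to+\infty}h(iy)/(iy)$. Hence \eqref{growthcriterion} forces $b=0$.

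\emph{Step 2: localize $\nu$ to $[1,+\infty)$.} This is the step I expect to need the most care. The function $h$ is holomorphic on $\Lambda$, which contains $(-\infty,1)$, and $h$ is real there because $h(x)>0$.
\begin{itemize}
\item By the Schwarz reflection principle, $h(\bar z)=\overline{h(z)}$ on $\Lambda$.
\item Therefore $\mathrm{Im}\,h(x+i\varepsilon)\to 0$ locally uniformly on $(-\infty,1)$ as $\varepsilon\downarrow 0$.
\item The Stieltjes inversion formula then gives $\nu((-\infty,1))=0$.
\end{itemize}
The representation now extends to all of $\Lambda$ by the identity theorem, since both sides are holomorphic there. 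In particular, for $x<1$,
\[
h'(x)=\int_{[1,\infty)}\frac{d\nu(s)}{(s-x)^2}\geq 0.
\]

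\emph{Step 3: finite mass.} The previous step shows $h$ is nondecreasing on $(-\infty,1)$. Since $h>0$ there, the limit $L=\lim_{x\to+\infty}h(-x)\geq 0$ exists. For $x>0$,
\[
h(0)-h(-x)=\int_{[1,\infty)}\Big(\frac1s-\frac1{s+x}\Big)\,d\nu(s).
\]
Monotone convergence as $x\to+\infty$ gives
\[
\int_{[1,\infty)}\frac{d\nu(s)}{s}=h(0)-L=1-L<\infty .
\]
This finiteness lets us absorb the compensating terms $s/(1+s^2)$ into the constant. The result is
\[
h(z)=L+\int_{[1,\infty)}\frac{d\nu(s)}{s-z},\qquad z\in\Lambda .
\]

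\emph{Step 4: change of variables.} Write
\[
\frac1{s-z}=\frac1s\cdot\frac{1}{1-z/s}.
\]
Let $\mu=L\,\delta_0+\rho$, where $\rho$ is the pushforward of $s^{-1}d\nu(s)$ under $t=1/s$. Then $\rho$ is supported in $(0,1]$, and $h(z)=\int_{[0,1]}d\mu(t)/(1-tz)$. Finally, $\mu([0,1])=L+(1-L)=h(0)=1$, so $\mu$ is a probability measure and $h\in\mathfrak{T}$.
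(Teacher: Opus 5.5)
Your proposal is correct and follows essentially the same route as the paper. Both arguments use the Herglotz--Nevanlinna representation, get $b=0$ from the growth condition, and confine the measure to $[1,+\infty)$ using analyticity and reality on $(-\infty,1)$. Both then obtain $\int d\nu/s=1-L<\infty$ from monotonicity and monotone convergence, and finish with the substitution $t=1/s$, with the limit $L$ becoming the atom at $0$. The differences are cosmetic: you justify the support condition by Stieltjes inversion, you compute $h(0)-h(-x)$ by subtracting the representation directly rather than integrating $h'$, and you absorb the compensating terms by absolute convergence rather than by a dominated-convergence limit of $h(z)-h(-R)$.
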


\begin{proof}
By the Herglotz--Nevanlinna representation theorem (Theorem I in Chapter $2$, \cite{Donoghue1974}),
\[
h(z)=a+bz+\int_{\mathbb{R}}
\left(\frac1{u-z}-\frac{u}{1+u^2}\right)d\rho(u),
\qquad a\in\mathbb{R},\quad b\geq 0,
\]
where $\rho$ is a positive measure.  Since $h$ is holomorphic across $(-\infty,1)$ and is real there, the support criterion for the Herglotz measure gives
\[
\mathrm{supp}\,\rho\subseteq[1,+\infty).
\]
Moreover, $b=\lim_{y\rightarrow+\infty}h(iy)/(iy)$, so \eqref{growthcriterion} implies $b=0$.

For $x<1$,
\[
h'(x)=\int_{[1,+\infty)}\frac{d\rho(u)}{(u-x)^2}\geq 0.
\]
Thus $h$ is nondecreasing on $(-\infty,1)$, and the limit
\[
c=\lim_{x\rightarrow+\infty}h(-x)
\]
exists in $[0,1]$.  Integrating $h'$ from $-R$ to $0$ and using Tonelli's theorem gives
\[
1-h(-R)=\int_{[1,+\infty)}
\left(\frac1u-\frac1{u+R}\right)d\rho(u).
\]
Monotone convergence as $R\rightarrow+\infty$ therefore yields
\[
\int_{[1,+\infty)}\frac{d\rho(u)}u=1-c<+\infty.
\]
For fixed $z\in\Lambda$, the difference kernel in $h(z)-h(-R)$ is dominated by a constant multiple of $1/u$ on $[1,+\infty)$.  Dominated convergence consequently gives
\[
h(z)=c+\int_{[1,+\infty)}\frac{d\rho(u)}{u-z}
=c+\int_{[1,+\infty)}\frac{u}{u-z}\,d\lambda(u),
\qquad d\lambda(u)=\frac{d\rho(u)}u.
\]
Since $c+\lambda([1,+\infty))=1$, the change of variables $t=1/u$, with the constant $c$ regarded as an atom at $0$, yields \eqref{Tclass}.
\end{proof}

\begin{lem}\label{quotientlemma}
Suppose that
\[
f(z)=\int_0^1\frac{\phi(t)dt}{1-tz},
\qquad
g(z)=c+\int_0^1\frac{\psi(t)dt}{1-tz},
\]
where $c\geq 0$, $f,g\in\mathfrak{T}$, $\phi,
\psi>0$ on $(0,1)$, and $\phi/\psi$ is nondecreasing.  If $c=0$, assume in addition that
\begin{equation}\label{psiintegrable}
\int_0^1\frac{\psi(t)}t\,dt<+\infty.
\end{equation}
Then $f/g\in\mathfrak{T}$.
\end{lem}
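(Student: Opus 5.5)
We plan to verify the hypotheses of Lemma \ref{Stieltjescriterion} for $h=f/g$. The first step is holomorphy and positivity. For $z\in\Lambda$ with $\mathrm{Im}\,z>0$, each kernel $1/(1-tz)$ with $t>0$ has strictly positive imaginary part. Since $\psi>0$, we get $\mathrm{Im}\,g>0$ there, and by conjugation $\mathrm{Im}\,g<0$ in the lower half-plane. On $(-\infty,1)$ we have $g>0$. Hence $g$ has no zeros on $\Lambda$, and $h$ is holomorphic on $\Lambda$. Moreover $h(0)=f(0)/g(0)=1$, because $f,g\in\mathfrak{T}$, and $h(x)>0$ for $x<1$.

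The second step, which is the main one, is the Pick property $\mathrm{Im}\,h\geq 0$ on the upper half-plane. Since $\mathrm{Im}(f/g)=\mathrm{Im}(f\bar g)/|g|^2$, it suffices to show $\mathrm{Im}(f(z)\overline{g(z)})\geq0$. Write $k_t=1/(1-tz)$ and expand:
\[
\mathrm{Im}(f\bar g)=c\,\mathrm{Im}f+\iint \phi(t)\psi(s)\,\mathrm{Im}\big(k_t\overline{k_s}\big)\,dt\,ds .
\]
The first term is nonnegative. For the double integral we compute
\[
k_t\overline{k_s}=\frac{(1-s\bar z)(1-tz)^{\ast}}{|1-tz|^2|1-sz|^2},
\qquad
\mathrm{Im}\big((1-\bar t\bar z)(1-sz)\big)\text{ combined gives }(t-s)\,\mathrm{Im}\,z .
\]
Precisely, $\mathrm{Im}(k_t\overline{k_s})=(t-s)\,\mathrm{Im}\,z/(|1-tz|^2|1-sz|^2)$. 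Symmetrize the double integral in $(t,s)$. It becomes
\[
\tfrac12\iint (t-s)\big(\phi(t)\psi(s)-\phi(s)\psi(t)\big)\frac{\mathrm{Im}\,z}{|1-tz|^2|1-sz|^2}\,dt\,ds .
\]
Here $\phi(t)\psi(s)-\phi(s)\psi(t)=\psi(t)\psi(s)\big(\tfrac{\phi}{\psi}(t)-\tfrac{\phi}{\psi}(s)\big)$ has the sign of $t-s$, because $\phi/\psi$ is nondecreasing. So the integrand is nonnegative and $\mathrm{Im}\,h\geq0$. This is exactly where the monotone likelihood-ratio hypothesis enters.

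The final step is the growth condition \eqref{growthcriterion}, which I expect to be the delicate point. We have $|f(iy)|\leq1$, since $|1-ity|\geq1$. It therefore suffices that $y\,|g(iy)|$ stays bounded away from $0$ as $y\to+\infty$.

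If $c>0$, we use that $\mathrm{Re}\,k_t(iy)=1/(1+t^2y^2)>0$, so $\mathrm{Re}\,g(iy)\geq c$. Then $|h(iy)|\leq 1/c$, and $h(iy)/(iy)\to0$.

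If $c=0$, we use $\mathrm{Im}\,g(iy)=\int_0^1\psi(t)\,ty/(1+t^2y^2)\,dt$. For $y\geq1$,
\[
y\,\mathrm{Im}\,g(iy)=\int_0^1\psi(t)\frac{t^2y^2}{1+t^2y^2}\,dt\to\int_0^1\psi(t)\,dt=1
\]
by monotone convergence, since $\psi$ has total mass $1$ when $g\in\mathfrak{T}$ and $c=0$. Hence $|g(iy)|\geq 1/(2y)$ for large $y$, and $|h(iy)/(iy)|\leq 2|f(iy)|$. This only gives boundedness, so we still need $f(iy)\to0$. That holds by dominated convergence: $k_t(iy)\to0$ for every $t>0$, $|k_t|\leq1$, and $\phi$ is integrable. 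Therefore $h(iy)/(iy)\to0$.

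In fact the limit of $y\,\mathrm{Im}\,g(iy)$ above did not use \eqref{psiintegrable}. If the argument needs tightening, that hypothesis would give $|g(iy)|\approx(iy)^{-1}\int\psi(t)t^{-1}dt$, and hence the sharper statement that $h(iy)$ stays bounded. Lemma \ref{Stieltjescriterion} then yields $f/g\in\mathfrak{T}$.
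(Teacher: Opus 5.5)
Your proposal is correct and follows the paper's proof essentially step for step. Both arguments show that $g$ has no zeros because of the sign of $\mathrm{Im}\,g$, obtain the Pick property from the symmetrized double integral with factor $(t-s)(\phi(t)\psi(s)-\phi(s)\psi(t))$, and check the growth condition so that Lemma \ref{Stieltjescriterion} applies.

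The only deviation is in the case $c=0$. The paper uses \eqref{psiintegrable} to get the exact limit $iy\,g(iy)\to-\int_0^1\psi(t)t^{-1}\,dt$, whereas you use a lower bound. Your display has a slip here: in fact $y\,\mathrm{Im}\,g(iy)=\int_0^1\psi(t)\frac{ty^2}{1+t^2y^2}\,dt\to\int_0^1\psi(t)t^{-1}\,dt$. Since $t\geq t^2$ on $[0,1]$, your expression is a lower bound for the correct one. So $|g(iy)|\geq 1/(2y)$ for large $y$ still holds, and, as you observe, \eqref{psiintegrable} is not actually needed.
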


\begin{proof}
The function $g$ has no zeros in $\Lambda$: its imaginary part is strictly positive in the upper half-plane and strictly negative in the lower half-plane, while $g(x)>0$ for $x<1$.  Hence $f/g$ is holomorphic on $\Lambda$.  For $\mathrm{Im}\,z>0$, expansion of
$f(z)\overline{g(z)}-\overline{f(z)}g(z)$ gives
\begin{align}
2i|g(z)|^2\mathrm{Im}\frac{f(z)}{g(z)}
&=(z-\overline z)
\int_{0<s<t<1}
\frac{(t-s)(\phi(t)\psi(s)-\phi(s)\psi(t))}
{|1-sz|^2|1-tz|^2}\,ds\,dt\notag\\
&\quad +(z-\overline z)c
\int_0^1\frac{t\phi(t)}{|1-tz|^2}\,dt.\label{pickquotient}
\end{align}
The right-hand side has the required sign because $\phi/\psi$ is nondecreasing.  Hence $\mathrm{Im}(f/g)(z)\geq 0$ in the upper half-plane.  Also, $(f/g)(x)>0$ for $x<1$ and $(f/g)(0)=1$.

If $c=0$, then \eqref{psiintegrable} and dominated convergence give
\[
iy\,g(iy)\longrightarrow-\int_0^1\frac{\psi(t)}t\,dt\neq 0,
\qquad f(iy)\longrightarrow 0.
\]
Consequently $(f/g)(iy)/(iy)\rightarrow 0$.  If $c>0$, then $g(iy)\rightarrow c$ and $|f(iy)|\leq 1$, so the same growth condition holds.  Lemma \ref{Stieltjescriterion} completes the proof.
\end{proof}

\begin{lem}\label{involution}
If $h\in\mathfrak{T}$, then
\begin{equation}\label{Jdef}
(\mathcal{J}h)(z)=\frac1{(1-z)h(z)}
\end{equation}
belongs to $\mathfrak{T}$.
\end{lem}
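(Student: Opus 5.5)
We verify the hypotheses of Lemma \ref{Stieltjescriterion} for $\mathcal{J}h$. Write $h(z)=\int_{[0,1]}d\mu(t)/(1-tz)$ with $\mu$ a probability measure.

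First, $\mathcal{J}h$ is holomorphic on $\Lambda$. The factor $1-z$ vanishes only at $z=1\notin\Lambda$, so we only need $h$ to have no zeros in $\Lambda$. For $x<1$ every kernel $1/(1-tx)$ is positive, so $h(x)>0$. For $\mathrm{Im}\,z>0$ we have
\[
\mathrm{Im}\,h(z)=\mathrm{Im}\,z\int t\,|1-tz|^{-2}\,d\mu(t)\geq 0 .
\]
This is strictly positive unless $\mu=\delta_0$, in which case $h\equiv1$ and $\mathcal{J}h=1/(1-z)\in\mathfrak{T}$ trivially, with $\mu=\delta_1$. The lower half-plane follows by conjugation. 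The normalizations are immediate: $(\mathcal{J}h)(0)=1$, and $(\mathcal{J}h)(x)>0$ for $x<1$.

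The key step is the Pick property $\mathrm{Im}\,(\mathcal{J}h)(z)\geq0$ for $\mathrm{Im}\,z>0$. Since $\mathrm{Im}(1/w)=-\mathrm{Im}\,w/|w|^2$, this is equivalent to $\mathrm{Im}\big((1-z)h(z)\big)\leq 0$. Using
\[
\frac{1-z}{1-tz}=1-\frac{(1-t)z}{1-tz}=\frac1t-\frac{1-t}{t}\cdot\frac1{1-tz}\quad(t>0),
\]
we get
\[
\mathrm{Im}\frac{1-z}{1-tz}=-(1-t)\,\mathrm{Im}\,\frac{z}{1-tz}=-(1-t)\frac{\mathrm{Im}\,z}{|1-tz|^2}\leq0 .
\]
This identity holds for all $t\in[0,1]$: the second form above needs $t>0$, but the first one does not. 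Integrating against $\mu$ gives the sign we need. So $\mathcal{J}h$ maps the upper half-plane into the closed upper half-plane, and it is real and positive on $(-\infty,1)$.

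It remains to check the growth condition \eqref{growthcriterion}, namely $(\mathcal{J}h)(iy)/(iy)\to0$. This reduces to showing $y\,|(1-iy)h(iy)|\to\infty$, and I expect this to be the main obstacle, since $h(iy)$ may decay. Write $h(iy)=\mu(\{0\})+\int_{(0,1]}d\mu(t)/(1-ity)$.

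If $\mu(\{0\})>0$, then $h(iy)\to\mu(\{0\})\neq0$ and the claim is clear.

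Otherwise the imaginary part gives a lower bound:
\[
y\,\mathrm{Im}\,h(iy)=\int \frac{t y^2}{1+t^2y^2}\,d\mu(t).
\]
For $y\geq1$, each term $ty^2/(1+t^2y^2)$ is nonnegative and tends to $1/t>0$ on $(0,1]$. By Fatou's lemma,
\[
\liminf_{y\to\infty} y\,|h(iy)|\geq \int d\mu/t>0,
\]
possibly infinite. Hence $y\,|(1-iy)h(iy)|\geq y\cdot y|h(iy)|\cdot(1+o(1))\to\infty$, and \eqref{growthcriterion} follows.

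Lemma \ref{Stieltjescriterion} then gives $\mathcal{J}h\in\mathfrak{T}$.
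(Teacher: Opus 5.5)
Your proof is correct and follows the paper's route. Like the paper, you verify the hypotheses of Lemma~\ref{Stieltjescriterion}, and you get the Pick property from the same identity $\mathrm{Im}\frac{1-z}{1-tz}=-\frac{(1-t)\,\mathrm{Im}\,z}{|1-tz|^2}$. The one real difference is the growth condition, which is not actually an obstacle: the paper notes $\mathrm{Re}\frac{1-iy}{1-ity}=\frac{1+ty^2}{1+t^2y^2}\geq 1$, so $|(1-iy)h(iy)|\geq 1$ and $|(\mathcal{J}h)(iy)|\leq 1$ for every $y$. That removes your case split on $\mu(\{0\})$ and the Fatou argument. (The paper also gets nonvanishing from $\mathrm{Im}\big((1-z)h(z)\big)<0$ unless $\mu=\delta_1$, where you use $\mathrm{Im}\,h>0$ unless $\mu=\delta_0$; both are fine.)
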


\begin{proof}
Let $\mu$ represent $h$.  If $\mathrm{Im}\,z>0$, then
\[
(1-z)h(z)=\int_{[0,1]}\frac{1-z}{1-tz}\,d\mu(t),
\qquad
\mathrm{Im}\frac{1-z}{1-tz}
=-\frac{(1-t)\mathrm{Im}\,z}{|1-tz|^2}\leq 0.
\]
Unless $\mu=\delta_1$, the imaginary part is strictly negative; for $\mu=\delta_1$, the product is identically $1$.  Thus its reciprocal maps the upper half-plane into the closed upper half-plane, is positive on $(-\infty,1)$, and equals $1$ at the origin.  Finally,
\[
\mathrm{Re}\frac{1-iy}{1-ity}
=\frac{1+ty^2}{1+t^2y^2}\geq 1,
\]
so $|(1-iy)h(iy)|\geq 1$ and $(\mathcal{J}h)(iy)/(iy)\rightarrow 0$.  Lemma \ref{Stieltjescriterion} applies.
\end{proof}

For two nonempty finite indices
\[
{\bf k}=(k_1,\ldots,k_r),
\qquad
{\boldsymbol\ell}=(\ell_1,\ldots,\ell_s),
\]
write ${\bf k}\succ{\boldsymbol\ell}$ if either ${\boldsymbol\ell}$ is a proper prefix of ${\bf k}$, or, at the first position where the two indices differ, the component of ${\bf k}$ is smaller.  Thus a proper extension is larger, and the order is reverse lexicographic at the first unequal component.

\begin{prop}\label{strictMLR}
If ${\bf k}\succ{\boldsymbol\ell}$, then
\begin{equation}\label{MLRratio}
t\mapsto\frac{\sigma_{\bf k}(t)}{\sigma_{\boldsymbol\ell}(t)}
\end{equation}
is strictly increasing on $(0,1)$.
\end{prop}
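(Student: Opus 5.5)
The plan is to reduce everything to two ingredients: a \emph{preservation lemma} saying that both operators $\mathsf{M}$ and $\mathsf{C}$ of \eqref{MCdef} preserve strict monotonicity of density ratios, and a \emph{base comparison} at the first position where ${\bf k}$ and ${\boldsymbol\ell}$ differ. Since \eqref{sigmadef} reads from right to left, the common prefix of ${\bf k}$ and ${\boldsymbol\ell}$ contributes the same operator word $P$ (a product of $\mathsf{M}$'s and $\mathsf{C}$'s) applied on the left of both densities. So it suffices to prove strict monotonicity of the ratio of the two ``tail'' functions and then push it through $P$ one operator at a time.

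\emph{Preservation lemma.} Let $u,w>0$ be locally absolutely continuous on $(0,1)$, integrable as in \eqref{MCdef}, with $R=u/w$ strictly increasing. Then $\mathsf{C}u/\mathsf{C}w$ and $\mathsf{M}u/\mathsf{M}w$ are strictly increasing.
\begin{itemize}
\item For $\mathsf{C}$: differentiating the quotient, its derivative has the sign of
\[
u(t)(\mathsf{C}w)(t)-w(t)(\mathsf{C}u)(t)=w(t)\int_0^t\frac{(R(t)-R(v))\,w(v)}{1-v}\,dv ,
\]
which is strictly positive because $R(v)<R(t)$ for $v<t$.
\item For $\mathsf{M}$: since $(\mathsf{M}u)'=-u/t$, the derivative of the quotient has the sign of
\[
\frac{w(t)}{t}\int_t^1\frac{(R(v)-R(t))\,w(v)}{v}\,dv>0 .
\]
\end{itemize}
Positivity and local absolute continuity of all functions involved were noted after \eqref{sigmadef}, so these derivative computations are legitimate almost everywhere. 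This yields strict monotonicity of the quotient.

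\emph{Base comparison.} There are two cases.
\begin{itemize}
\item Case 1: ${\boldsymbol\ell}=(\ell_1,\ldots,\ell_s)$ is a proper prefix of ${\bf k}=({\boldsymbol\ell},{\boldsymbol\tau})$. Then $\sigma_{\boldsymbol\ell}=P\,\mathsf{M}^{\ell_s-1}{\bf 1}$ and $\sigma_{\bf k}=P\,\mathsf{M}^{\ell_s-1}\mathsf{C}\sigma_{\boldsymbol\tau}$. The tail ratio is $\mathsf{C}\sigma_{\boldsymbol\tau}/{\bf 1}$, which is strictly increasing since $\sigma_{\boldsymbol\tau}>0$.
\item Case 2: the first difference is at position $i$, with $k_i<\ell_i$. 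Write $d=\ell_i-k_i\geq1$. Then $\sigma_{\bf k}=P\,\mathsf{M}^{k_i-1}X$ and $\sigma_{\boldsymbol\ell}=P\,\mathsf{M}^{k_i-1}\mathsf{M}^{d}Y$. Here $X$ is either ${\bf 1}$ (if $i=r$) or $\mathsf{C}\sigma_{(k_{i+1},\ldots)}$, and similarly for $Y$. Now $X$ is nondecreasing, being constant or an integral $\int_0^t$ of a positive function. Meanwhile $\mathsf{M}^dY$ is strictly decreasing, since $(\mathsf{M}v)'=-v/t<0$ for positive $v$. Hence $X/\mathsf{M}^dY$ is strictly increasing.
\end{itemize}
In both cases, applying the preservation lemma repeatedly to the operators of $\mathsf{M}^{k_i-1}$ (respectively $\mathsf{M}^{\ell_s-1}$) and then of $P$ gives \eqref{MLRratio}.

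The main obstacle I expect is the technical justification in the preservation lemma near the endpoints. The functions $\mathsf{M}^jY$ have logarithmic singularities at $0$, and the $\mathsf{C}$-images may blow up at $1$. So one must check that the integrals defining $\mathsf{M}u$ and $\mathsf{C}u$ converge for every intermediate function, which holds because they are integrable densities of total mass $1$. One must also check that the quotient-derivative argument only needs to be done on compact subintervals of $(0,1)$, where everything is absolutely continuous. With this local argument, strictness propagates without any boundary conditions.
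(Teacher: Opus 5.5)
Your proposal is correct and is essentially the paper's proof. It uses the same preservation property of $\mathsf{C}$ and $\mathsf{M}$: your integrals $w(t)\int_0^t (R(t)-R(v))\,w(v)(1-v)^{-1}\,dv$ are the paper's ``weighted average versus endpoint value'' comparison written out. It has the same two base cases (a nondecreasing function over the strictly decreasing $\mathsf{M}^{d}Y$, and $\mathsf{C}\sigma_{\boldsymbol\tau}/{\bf 1}$ for a proper prefix) and the same propagation through the common prefix. Since $u$ and $w$ are continuous, $\mathsf{C}u$ and $\mathsf{M}u$ are $C^1$ and your derivative expressions are strictly positive at every point of $(0,1)$, so the almost-everywhere caveat is unnecessary.
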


\begin{proof}
We first record a preservation property.  Let $u,v>0$ be locally absolutely continuous, suppose that the transforms below are finite on $(0,1)$, and assume that $u/v$ is nondecreasing.  Then
\[
\frac{\mathsf{C}u(t)}{\mathsf{C}v(t)}
=
\frac{\displaystyle\int_0^t\frac{u(s)}{v(s)}\frac{v(s)}{1-s}\,ds}
{\displaystyle\int_0^t\frac{v(s)}{1-s}\,ds},
\]
\[
\frac{\mathsf{M}u(t)}{\mathsf{M}v(t)}
=
\frac{\displaystyle\int_t^1\frac{u(s)}{v(s)}\frac{v(s)}s\,ds}
{\displaystyle\int_t^1\frac{v(s)}s\,ds}.
\]
The first quotient is a weighted average of $u/v$ over $(0,t)$ and the second is a weighted average over $(t,1)$.  Differentiating almost everywhere gives
\[
\left(\frac{\mathsf{C}u}{\mathsf{C}v}\right)'
=
\frac{v(t)}{(1-t)(\mathsf{C}v)(t)}
\left(\frac{u(t)}{v(t)}-\frac{(\mathsf{C}u)(t)}{(\mathsf{C}v)(t)}\right)\geq 0,
\]
\[
\left(\frac{\mathsf{M}u}{\mathsf{M}v}\right)'
=
\frac{v(t)}{t(\mathsf{M}v)(t)}
\left(\frac{(\mathsf{M}u)(t)}{(\mathsf{M}v)(t)}-\frac{u(t)}{v(t)}\right)\geq 0.
\]
If $u/v$ is strictly increasing, positivity of the weights shows that the first weighted average is strictly smaller than the right-endpoint value and that the second is strictly larger than the left-endpoint value.  The two displayed derivatives are therefore positive almost everywhere.  Since the quotients are locally absolutely continuous and the displayed derivatives are positive almost everywhere on every compact subinterval, both $\mathsf{C}$ and $\mathsf{M}$ preserve strictly increasing ratios.

Suppose first that, after deleting a common prefix, the two indices begin with $(p,{\boldsymbol\alpha})$ and $(q,{\boldsymbol\gamma})$, where $p<q$.  Put
\[
V_{\boldsymbol\alpha}=
\begin{cases}
{\bf 1},&{\boldsymbol\alpha}=\varnothing,\\
\mathsf{C}\sigma_{\boldsymbol\alpha},&{\boldsymbol\alpha}\neq\varnothing.
\end{cases}
\]
Then
\[
\sigma_{(p,{\boldsymbol\alpha})}=\mathsf{M}^{p-1}V_{\boldsymbol\alpha},
\qquad
\sigma_{(q,{\boldsymbol\gamma})}
=\mathsf{M}^{p-1}\big(\mathsf{M}^{q-p}V_{\boldsymbol\gamma}\big).
\]
The numerator $V_{\boldsymbol\alpha}$ is nondecreasing.  Since $q-p\geq1$ and every function involved is positive,
\[
(\mathsf{M}u)'(t)=-\frac{u(t)}t<0,
\]
so $\mathsf{M}^{q-p}V_{\boldsymbol\gamma}$ is strictly decreasing.  Their ratio is therefore strictly increasing.  Apply $\mathsf{M}^{p-1}$ simultaneously to numerator and denominator, and then restore the common prefix.  Restoring a component $a$ means applying $\mathsf{C}$ and then $\mathsf{M}^{a-1}$ simultaneously, so strict increase is preserved.

If ${\boldsymbol\ell}$ is a proper prefix of ${\bf k}$, delete the common components preceding the final component $p$ of the shorter index.  It remains to compare
\[
\frac{\mathsf{M}^{p-1}(\mathsf{C}\sigma_{\boldsymbol\alpha})}
{\mathsf{M}^{p-1}{\bf 1}},
\qquad {\boldsymbol\alpha}\neq\varnothing.
\]
The initial ratio $(\mathsf{C}\sigma_{\boldsymbol\alpha})/{\bf 1}$ is strictly increasing, and the same preservation argument completes the proof.
\end{proof}

\begin{Cor}\label{firstmomentstrict}
If ${\bf k}\succ{\boldsymbol\ell}$, then
\[
A_{\bf k}(1)>A_{\boldsymbol\ell}(1).
\]
\end{Cor}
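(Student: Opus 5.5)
We need $A_{\bf k}(1)=\int_0^1 t\,\sigma_{\bf k}(t)\,dt > \int_0^1 t\,\sigma_{\boldsymbol\ell}(t)\,dt = A_{\boldsymbol\ell}(1)$. The plan is to use the strict likelihood-ratio order from Proposition \ref{strictMLR} together with the fact that both densities are probability densities, by Proposition \ref{momentdensity}. This is the standard fact that a strict monotone likelihood-ratio order implies strict stochastic dominance.

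Put $\rho(t)=\sigma_{\bf k}(t)/\sigma_{\boldsymbol\ell}(t)$, which is strictly increasing on $(0,1)$.

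\begin{enumerate}
\item Since both $\sigma_{\bf k}$ and $\sigma_{\boldsymbol\ell}$ are positive with total integral $1$, we have
\[
\int_0^1(\rho(t)-1)\sigma_{\boldsymbol\ell}(t)\,dt=0.
\]
Hence $\rho-1$ cannot have constant sign almost everywhere unless $\rho\equiv1$, which is excluded by strict monotonicity. So there is a point $t_0\in(0,1)$ with $\rho<1$ on $(0,t_0)$ and $\rho>1$ on $(t_0,1)$; at $t_0$ itself the value is irrelevant.

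\item Then
\[
A_{\bf k}(1)-A_{\boldsymbol\ell}(1)=\int_0^1 (t-t_0)(\rho(t)-1)\,\sigma_{\boldsymbol\ell}(t)\,dt ,
\]
where subtracting $t_0$ is allowed by step 1. The integrand is strictly positive almost everywhere on $(0,1)$, because both factors $t-t_0$ and $\rho(t)-1$ change sign at $t_0$ and $\sigma_{\boldsymbol\ell}>0$. The integral is therefore strictly positive.
\end{enumerate}

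Integrability is immediate, since $t\le1$ and both densities are integrable.

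The only point needing care is locating the crossing point $t_0$, which amounts to the sign-change argument in step 1. This is elementary, so I expect no real obstacle.
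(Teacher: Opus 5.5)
Your proof is correct. It uses the same two inputs as the paper: the strictly increasing ratio from Proposition \ref{strictMLR}, and the normalization $\int_0^1\sigma_{\bf k}=\int_0^1\sigma_{\boldsymbol\ell}=1$ from Proposition \ref{momentdensity}. Where you differ is in how you turn these into the moment inequality. The paper writes the difference as a covariance under $\sigma_{\boldsymbol\ell}(t)\,dt$ and symmetrizes it:
\[
A_{\bf k}(1)-A_{\boldsymbol\ell}(1)=\frac12\int_0^1\int_0^1(t-s)\bigl(r(t)-r(s)\bigr)\,\sigma_{\boldsymbol\ell}(t)\sigma_{\boldsymbol\ell}(s)\,dt\,ds .
\]
The integrand is positive off the diagonal, so strictness is immediate and no crossing point has to be found. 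This is the Chebyshev covariance identity, and the paper reuses it in Lemma \ref{noatomzero} with the decreasing test function $(1+xt)^{-1}$.

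You instead pivot at the crossing point $t_0$ where $\rho$ passes $1$. This costs you the extra step of showing $t_0\in(0,1)$. You handle it correctly: if $\rho-1$ kept one sign on $(0,1)$, its zero integral against the positive density would force $\rho=1$ almost everywhere, contradicting strict monotonicity. Monotonicity of $\rho$ then makes $\{\rho<1\}$ an initial interval and $\{\rho>1\}$ a final one.

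In exchange, your argument only uses that $\rho-1$ changes sign once. So it proves the inequality (for any increasing test function in place of $t$) under a single-crossing hypothesis, which is weaker than the monotone likelihood-ratio order the symmetrized identity requires.
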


\begin{proof}
Use $\sigma_{\boldsymbol\ell}(t)dt$ as a probability measure and put
$r(t)=\sigma_{\bf k}(t)/\sigma_{\boldsymbol\ell}(t)$.  Since $\mathbb{E}r=1$,
\[
\begin{split}
A_{\bf k}(1)-A_{\boldsymbol\ell}(1)
&=\mathbb{E}(tr)-\mathbb{E}(t)\mathbb{E}(r)\\
&=\frac12\int_0^1\int_0^1
(t-s)(r(t)-r(s))
\sigma_{\boldsymbol\ell}(t)\sigma_{\boldsymbol\ell}(s)\,dt\,ds>0.
\end{split}
\]
The strict inequality follows from Proposition \ref{strictMLR}.
\end{proof}

By \eqref{StieltjesH},
\begin{equation}\label{Fint}
F_{\bf k}(z)
=z\int_0^1\frac{t\sigma_{\bf k}(t)}{1-tz}\,dt.
\end{equation}
For $|z|=1$, $z\neq 1$, the right-hand side is also the sum of the original boundary series.  Indeed, its $N$-th partial sum is
\[
\int_0^1
\frac{tz(1-(tz)^N)}{1-tz}\sigma_{\bf k}(t)\,dt,
\]
and dominated convergence applies.

Put
\begin{equation}\label{Fhat}
\widehat F_{\bf k}(z)
=\frac{F_{\bf k}(z)}{A_{\bf k}(1)z}
=\int_0^1\frac{t\sigma_{\bf k}(t)/A_{\bf k}(1)}{1-tz}\,dt.
\end{equation}
Then $\widehat F_{\bf k}\in\mathfrak{T}$, its representing measure has no atom at $0$, and
\begin{equation}\label{denintegrable}
\int_0^1\frac{t\sigma_{\bf k}(t)/A_{\bf k}(1)}t\,dt
=\frac1{A_{\bf k}(1)}<+\infty.
\end{equation}
The positivity of the density implies that $\mathrm{Im}\,\widehat F_{\bf k}(z)$ has the sign of $\mathrm{Im}\,z$, while $\widehat F_{\bf k}(x)>0$ for $x<1$.  Thus $\widehat F_{\bf k}$ has no zeros in $\Lambda$, and $F_{\bf k}$ has exactly one zero there, the simple zero at $0$.

We use the convention $F_{\varnothing}=0$.  From \eqref{Fint},
\begin{equation}\label{Fminusinfty}
F_{\bf k}(-x)
=-\int_0^1\frac{xt}{1+xt}\sigma_{\bf k}(t)\,dt
\rightarrow -1
\qquad(x\rightarrow+\infty).
\end{equation}

\begin{prop}\label{logderivative}
For every nonempty finite index ${\bf k}$, the function
\begin{equation}\label{Pdef}
P_{\bf k}(z)=\frac{zF_{\bf k}'(z)}{F_{\bf k}(z)}
\end{equation}
has a removable singularity at $0$, belongs to $\mathfrak{T}$ after that singularity is filled in, and is not constant.
\end{prop}

\begin{proof}
Write ${\bf k}=(p,{\boldsymbol\tau})$. Assume first that $p\geq 2$, and put ${\bf k}^{-}=(p-1,{\boldsymbol\tau})$.  Then ${\bf k}^{-}\succ{\bf k}$.  Proposition \ref{strictMLR}, Lemma \ref{quotientlemma}, and \eqref{denintegrable} give
\begin{equation}\label{qminus}
q(z)=\frac{\widehat F_{{\bf k}^{-}}(z)}{\widehat F_{\bf k}(z)}
=\frac12\frac{F_{{\bf k}^{-}}(z)}{F_{\bf k}(z)}
\in\mathfrak{T},
\end{equation}
because \eqref{ArecM} gives
$A_{{\bf k}^{-}}(1)=2A_{\bf k}(1)$.  By \eqref{Fminusinfty}, $q(-x)\rightarrow1/2$, so the probability measure representing $q$ has an atom of mass $1/2$ at $0$.  The coefficient identity \eqref{ArecM} is equivalent to
\[
F_{{\bf k}^{-}}=F_{\bf k}+zF_{\bf k}'.
\]
Consequently,
\[
P_{\bf k}=\frac{F_{{\bf k}^{-}}}{F_{\bf k}}-1
=2q-1
=2\int_{(0,1]}\frac{d\nu(t)}{1-tz}\in\mathfrak{T},
\]
because the measure on the right has total mass $1$.

Now assume that $p=1$.  Put
\[
G_{\boldsymbol\tau}(z)=z+F_{\boldsymbol\tau}(z),
\qquad
h(z)=2\frac{F_{(1,{\boldsymbol\tau})}(z)}{G_{\boldsymbol\tau}(z)}.
\]
When ${\boldsymbol\tau}=\varnothing$, one has
$h=2F_{(1)}/z=\widehat F_{(1)}\in\mathfrak{T}$.  Suppose that ${\boldsymbol\tau}\neq\varnothing$ and set
\[
S=1+A_{\boldsymbol\tau}(1).
\]
The case $n=1$ of \eqref{ArecC} gives
\begin{equation}\label{halfcoefficient}
2A_{(1,{\boldsymbol\tau})}(1)=S.
\end{equation}
Therefore
\[
f(z)=\frac{2F_{(1,{\boldsymbol\tau})}(z)}{Sz}
=\int_0^1\frac{2t\sigma_{(1,{\boldsymbol\tau})}(t)/S}{1-tz}\,dt
\in\mathfrak{T},
\]
\[
g(z)=\frac{G_{\boldsymbol\tau}(z)}{Sz}
=\frac1S+\int_0^1\frac{t\sigma_{\boldsymbol\tau}(t)/S}{1-tz}\,dt
\in\mathfrak{T}.
\]
The representing measure of $g$ has a positive atom at $0$.  Moreover,
$(1,{\boldsymbol\tau})\succ{\boldsymbol\tau}$.  To see this, if all components of ${\boldsymbol\tau}$ are $1$, then ${\boldsymbol\tau}$ is a proper prefix of $(1,{\boldsymbol\tau})$; otherwise, at the first component of ${\boldsymbol\tau}$ greater than $1$, the corresponding component of $(1,{\boldsymbol\tau})$ is $1$.  Proposition \ref{strictMLR} and Lemma \ref{quotientlemma} now give
\[
h=\frac fg\in\mathfrak{T}.
\]

By Lemma \ref{involution}, $\mathcal{J}h\in\mathfrak{T}$.  Equation \eqref{Fminusinfty} gives
\[
h(-x)\sim\frac2x,
\qquad
(\mathcal{J}h)(-x)=\frac1{(1+x)h(-x)}\rightarrow\frac12.
\]
Thus the representing measure of $\mathcal{J}h$ has an atom of mass $1/2$ at $0$.

For ${\boldsymbol\tau}\neq\varnothing$, \eqref{ArecC} gives
\[
(n+1)A_{(1,{\boldsymbol\tau})}(n)
=1+\sum_{j=1}^{n}A_{\boldsymbol\tau}(j),
\]
and the same formula, with the sum absent, holds for
${\boldsymbol\tau}=\varnothing$.  Hence
\begin{equation}\label{firstoneidentity}
F_{(1,{\boldsymbol\tau})}+zF_{(1,{\boldsymbol\tau})}'
=\frac{z+F_{\boldsymbol\tau}}{1-z}
=\frac{G_{\boldsymbol\tau}}{1-z}.
\end{equation}
It follows that
\[
P_{(1,{\boldsymbol\tau})}+1
=\frac{G_{\boldsymbol\tau}}{(1-z)F_{(1,{\boldsymbol\tau})}}
=2\mathcal{J}h.
\]
Removing the atom of mass $1/2$ at $0$ shows that
$P_{(1,{\boldsymbol\tau})}=2\mathcal{J}h-1\in\mathfrak{T}$.

Finally, if
\[
F_{\bf k}(z)=A_{\bf k}(1)z+A_{\bf k}(2)z^2+\cdots,
\]
then
\[
P_{\bf k}(z)=1+\frac{A_{\bf k}(2)}{A_{\bf k}(1)}z+O(z^2).
\]
Since $A_{\bf k}(2)>0$, the function is not constant.
\end{proof}

Let $z=e^{i\theta}$, where $0<\theta<\pi$.  Equation \eqref{Fint} gives
\begin{equation}\label{ImFpositive}
\mathrm{Im}\,F_{\bf k}(e^{i\theta})
=\sin\theta\int_0^1
\frac{t\sigma_{\bf k}(t)}{|1-te^{i\theta}|^2}\,dt>0.
\end{equation}
We may therefore define
\[
\alpha_{\bf k}(\theta)=\arg F_{\bf k}(e^{i\theta})\in(0,\pi),
\qquad
\rho_{\bf k}(\theta)=|F_{\bf k}(e^{i\theta})|.
\]
Let $\eta_{\bf k}$ represent $P_{\bf k}$.  Since $P_{\bf k}$ is not constant,
$\eta_{\bf k}((0,1])>0$, and
\[
\mathrm{Re}\,P_{\bf k}(e^{i\theta})
=\int_{[0,1]}
\frac{1-t\cos\theta}{|1-te^{i\theta}|^2}\,d\eta_{\bf k}(t)>0,
\]
\[
\mathrm{Im}\,P_{\bf k}(e^{i\theta})
=\sin\theta\int_{(0,1]}
\frac{t}{|1-te^{i\theta}|^2}\,d\eta_{\bf k}(t)>0.
\]
Along the upper semicircle,
\[
\frac{d}{d\theta}\log F_{\bf k}(e^{i\theta})
=iP_{\bf k}(e^{i\theta}),
\]
and hence
\begin{equation}\label{selfmonotonicity}
\alpha_{\bf k}'(\theta)=\mathrm{Re}\,P_{\bf k}(e^{i\theta})>0,
\qquad
\frac{\rho_{\bf k}'(\theta)}{\rho_{\bf k}(\theta)}
=-\mathrm{Im}\,P_{\bf k}(e^{i\theta})<0.
\end{equation}
Thus the argument strictly increases and the modulus strictly decreases.

\begin{prop}\label{finitecross}
If ${\bf k}\succ{\boldsymbol\ell}$, then, for every $0<\theta<\pi$,
\[
|F_{\bf k}(e^{i\theta})|>|F_{\boldsymbol\ell}(e^{i\theta})|,
\qquad
\arg F_{\bf k}(e^{i\theta})>
\arg F_{\boldsymbol\ell}(e^{i\theta}).
\]
\end{prop}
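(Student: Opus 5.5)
The plan is to compare $F_{\bf k}$ and $F_{\boldsymbol\ell}$ through their quotient, using the normalized functions $\widehat F$ of \eqref{Fhat}. Set
\[
\phi(t)=\frac{t\sigma_{\bf k}(t)}{A_{\bf k}(1)},\qquad \psi(t)=\frac{t\sigma_{\boldsymbol\ell}(t)}{A_{\boldsymbol\ell}(1)}.
\]
By Proposition \ref{strictMLR}, $\phi/\psi$ is strictly increasing on $(0,1)$, and \eqref{denintegrable} gives \eqref{psiintegrable}. Lemma \ref{quotientlemma} with $c=0$ then yields
\[
Q=\frac{\widehat F_{\bf k}}{\widehat F_{\boldsymbol\ell}}\in\mathfrak{T},\qquad
\frac{F_{\bf k}}{F_{\boldsymbol\ell}}=\frac{A_{\bf k}(1)}{A_{\boldsymbol\ell}(1)}\,Q .
\]
On $S^1\setminus\{1\}$ this identity persists by continuity up to the boundary. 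Corollary \ref{firstmomentstrict} gives $A_{\bf k}(1)/A_{\boldsymbol\ell}(1)>1$.

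\emph{Argument inequality.} Let $\mu_Q$ be the measure representing $Q$. Then
\[
\mathrm{Im}\,Q(e^{i\theta})=\sin\theta\int_{(0,1]}\frac{t\,d\mu_Q(t)}{|1-te^{i\theta}|^2}.
\]
This is strictly positive unless $\mu_Q=\delta_0$, that is, unless $Q\equiv1$. That cannot happen: \eqref{pickquotient} with $c=0$ shows that $\mathrm{Im}\,Q>0$ in the upper half-plane whenever $\phi/\psi$ is strictly increasing. Since $\arg F_{\boldsymbol\ell}(e^{i\theta})\in(0,\pi)$ and $\arg Q(e^{i\theta})\in(0,\pi)$, we get
\[
\arg F_{\bf k}(e^{i\theta})\equiv\arg F_{\boldsymbol\ell}(e^{i\theta})+\arg Q(e^{i\theta})\pmod{2\pi}.
\]
Because the left side also lies in $(0,\pi)$, the congruence must be an equality of real numbers. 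This requires $\arg F_{\boldsymbol\ell}+\arg Q<\pi$, which I will obtain by continuity in $\theta$ from the endpoint $\theta\to0^+$, where both arguments tend to $0$. This gives the strict argument inequality.

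\emph{Modulus inequality.} The reduction is to show $|Q(e^{i\theta})|>A_{\boldsymbol\ell}(1)/A_{\bf k}(1)$. A bare $\mathfrak{T}$-bound is too weak here, so I will work with a double integral instead. With respect to the probability densities $\sigma$, we have $|F(e^{i\theta})|^2=\mathbb{E}\otimes\mathbb{E}\,[K_\theta(s,t)]$, where
\[
K_\theta(s,t)=\mathrm{Re}\,\frac{st}{(1-se^{i\theta})(1-te^{-i\theta})}.
\]
A strict MLR order implies strict first-order stochastic dominance, for the product measures as well. It would therefore suffice that $K_\theta$ be nondecreasing in each variable and not constant. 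I expect this monotonicity to be the main obstacle, since $\partial_sK_\theta$ need not have a sign for $\theta$ near $\pi$.

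If it fails, I will fall back on the limiting quotient argument. First, at $\theta=\pi$ both values are negative reals, and $|F(-1)|=\int_0^1 \frac{t}{1+t}\,\sigma(t)\,dt$; since $t/(1+t)$ is increasing, MLR gives strict inequality there. Second, I will show that $\log|F_{\bf k}/F_{\boldsymbol\ell}|=\log\bigl(\tfrac{A_{\bf k}(1)}{A_{\boldsymbol\ell}(1)}|Q|\bigr)$ cannot vanish on $(0,\pi)$. At a point where $|F_{\bf k}|=|F_{\boldsymbol\ell}|$, the argument inequality already places $F_{\bf k}$ strictly counterclockwise from $F_{\boldsymbol\ell}$ on a common circle. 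I will then combine the strictly monotone behaviour \eqref{selfmonotonicity} of each curve with the subordination-type ordering coming from $Q\in\mathfrak{T}$ to derive a contradiction, and conclude by connectedness of $(0,\pi)$.
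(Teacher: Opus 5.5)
\textbf{There is a genuine gap: the modulus inequality is not proved.}

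Your reduction to $Q=\widehat F_{\bf k}/\widehat F_{\boldsymbol\ell}\in\mathfrak{T}$ via Lemma \ref{quotientlemma} with $c=0$ is exactly the paper's, and your argument inequality is correct. The continuity step as $\theta\to0^+$ is unnecessary, and your claim that $\arg Q(e^{i\theta})\to0$ is unproved. The quantity $\arg F_{\bf k}-\arg F_{\boldsymbol\ell}-\arg Q$ is a multiple of $2\pi$ lying in $(-2\pi,\pi)$, so it is $0$ automatically.

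For the modulus, the kernel route fails. Since $\partial_sK_\theta(s,t)=\mathrm{Re}\,\frac{t}{(1-se^{i\theta})^2(1-te^{-i\theta})}$ and $(1-e^{i\theta})^{-2}=-e^{-i\theta}/(4\sin^2(\theta/2))$, one gets $\partial_sK_\theta(1,t)\approx -t\cos\theta/(4\sin^2(\theta/2))<0$ for small $t>0$. So monotonicity in $s$ fails for every $\theta<\pi/2$, not only near $\pi$. Your fallback is only a plan. At a point where $|F_{\bf k}|=|F_{\boldsymbol\ell}|$ with $F_{\bf k}$ counterclockwise of $F_{\boldsymbol\ell}$, nothing in \eqref{selfmonotonicity} or in the bare fact $Q\in\mathfrak{T}$ is contradicted, and ``subordination-type ordering'' names no mechanism. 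As you noted yourself, $Q\in\mathfrak{T}$ alone only gives $\mathrm{Re}\,Q>\frac12$ on the circle. That is weaker than the needed bound $|Q|>A_{\boldsymbol\ell}(1)/A_{\bf k}(1)$.

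\textbf{The missing idea is to identify the atom of $\mu_Q$ at $0$.} Put $c=A_{\bf k}(1)/A_{\boldsymbol\ell}(1)>1$. By \eqref{atomzero}, $\mu_Q(\{0\})=\lim_{x\to+\infty}Q(-x)$. By \eqref{Fminusinfty}, both $F_{\bf k}(-x)$ and $F_{\boldsymbol\ell}(-x)$ tend to $-1$, so this limit equals $1/c$. Hence
\[
\frac{F_{\bf k}(z)}{F_{\boldsymbol\ell}(z)}=cQ(z)=1+c\int_{(0,1]}\frac{d\mu_Q(t)}{1-tz},\qquad \mu_Q((0,1])=1-\frac1c>0.
\]
Take $z=e^{i\theta}$ with $0<\theta<\pi$, and $t\in(0,1]$. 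Then both $\mathrm{Re}(1-tz)^{-1}=(1-t\cos\theta)/|1-tz|^2$ and $\mathrm{Im}(1-tz)^{-1}=t\sin\theta/|1-tz|^2$ are positive. So the quotient has real part greater than $1$ and positive imaginary part. This gives $|F_{\bf k}(e^{i\theta})|>|F_{\boldsymbol\ell}(e^{i\theta})|$ at once. It also gives the argument inequality, with the quotient's principal argument lying in $(0,\pi/2)$. This is the paper's proof. With this step, no stochastic-dominance comparison, endpoint analysis at $\theta=\pi$, or connectedness argument is needed.
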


\begin{proof}
Put
\[
c=\frac{A_{\bf k}(1)}{A_{\boldsymbol\ell}(1)}>1.
\]
By Proposition \ref{strictMLR}, Lemma \ref{quotientlemma}, and \eqref{denintegrable},
\begin{equation}\label{qfinite}
q_{{\bf k},{\boldsymbol\ell}}(z)
=\frac{\widehat F_{\bf k}(z)}{\widehat F_{\boldsymbol\ell}(z)}
=\frac1c\frac{F_{\bf k}(z)}{F_{\boldsymbol\ell}(z)}
\in\mathfrak{T}.
\end{equation}
By \eqref{Fminusinfty}, the representing probability measure $\nu$ of $q_{{\bf k},{\boldsymbol\ell}}$ has an atom of mass $1/c$ at $0$.  Therefore
\begin{equation}\label{strongquotientfinite}
\frac{F_{\bf k}(z)}{F_{\boldsymbol\ell}(z)}
=1+c\int_{(0,1]}\frac{d\nu(t)}{1-tz},
\qquad
\nu((0,1])=1-\frac1c>0.
\end{equation}
For $z=e^{i\theta}$, $0<\theta<\pi$, both the real and imaginary parts of $(1-tz)^{-1}$ are positive for $t>0$.  Hence the quotient in \eqref{strongquotientfinite} has real part greater than $1$ and positive imaginary part.  Its modulus is greater than $1$, and its principal argument lies in $(0,\pi/2)$.  Moreover, both $F_{\bf k}(z)$ and $F_{\boldsymbol\ell}(z)$ lie in the open upper half-plane by \eqref{ImFpositive}; their argument difference belongs to $(-\pi,\pi)$ and is therefore exactly the principal argument of the quotient.  The two strict inequalities follow.
\end{proof}

\begin{Thm}\label{finiteseparation}
If ${\bf k}\succ{\boldsymbol\ell}$, then
\[
F_{\bf k}(z_1)\neq F_{\boldsymbol\ell}(z_2)
\]
for all $|z_1|=|z_2|=1$, $z_1,z_2\neq 1$.
\end{Thm}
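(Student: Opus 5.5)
We must show that the curves $\theta\mapsto F_{\bf k}(e^{i\theta})$ and $\theta\mapsto F_{\boldsymbol\ell}(e^{i\theta})$ never meet, even at different parameters. The plan is to combine the pointwise comparison of Proposition \ref{finitecross} with the self-monotonicity \eqref{selfmonotonicity}, after reducing to the upper semicircle. Since the coefficients $A(n)$ are real, $F(\overline z)=\overline{F(z)}$. By \eqref{ImFpositive}, $\mathrm{Im}\,F_{\bf k}(e^{i\theta})$ has the sign of $\sin\theta$ for $\theta\in(-\pi,\pi)\setminus\{0\}$. At $z=-1$, $F_{\bf k}(-1)$ is real and, by \eqref{Fint}, negative: $F_{\bf k}(-1)=-\int_0^1 t\sigma_{\bf k}/(1+t)\,dt<0$.

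Suppose $F_{\bf k}(z_1)=F_{\boldsymbol\ell}(z_2)=w$. If $\mathrm{Im}\,w>0$, both $z_j$ lie in the open upper semicircle. If $\mathrm{Im}\,w<0$, both lie in the lower one, and conjugation reduces this to the upper case. If $w$ is real, then $z_1=z_2=-1$. This last case is the endpoint $\theta=\pi$, which Proposition \ref{finitecross} does not cover. I would treat it by continuity: letting $\theta\to\pi$ in Proposition \ref{finitecross} gives the non-strict inequality $|F_{\bf k}(-1)|\geq|F_{\boldsymbol\ell}(-1)|$. For strictness I would apply \eqref{strongquotientfinite} at $z=-1$, which is legitimate because $-1\in\Lambda$. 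The quotient equals $1+c\int_{(0,1]}d\nu(t)/(1+t)$, and this is $>1$ since $\nu((0,1])>0$. Hence $F_{\bf k}(-1)\neq F_{\boldsymbol\ell}(-1)$.

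In the main case, write $z_1=e^{i\theta_1}$ and $z_2=e^{i\theta_2}$ with $\theta_1,\theta_2\in(0,\pi)$. Equality of $w$ forces $\alpha_{\bf k}(\theta_1)=\alpha_{\boldsymbol\ell}(\theta_2)$ and $\rho_{\bf k}(\theta_1)=\rho_{\boldsymbol\ell}(\theta_2)$. We compare with the common parameter $\theta_2$.
\begin{itemize}
\item If $\theta_1\le\theta_2$, then \eqref{selfmonotonicity} (modulus decreasing) and Proposition \ref{finitecross} give $\rho_{\bf k}(\theta_1)\geq\rho_{\bf k}(\theta_2)>\rho_{\boldsymbol\ell}(\theta_2)$, a contradiction.
\item If $\theta_1>\theta_2$, then the argument is increasing, so $\alpha_{\bf k}(\theta_1)>\alpha_{\bf k}(\theta_2)>\alpha_{\boldsymbol\ell}(\theta_2)$, again a contradiction.
\end{itemize}
So the two monotonicities point in complementary directions, and each case is closed by one of the two strict inequalities of Proposition \ref{finitecross}.

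The main obstacle is the bookkeeping at the real axis, namely making sure $z=-1$ (and no other point) can produce a real value. This rests on the sign of $\mathrm{Im}\,F$ from \eqref{ImFpositive} and the strict quotient bound at $-1$. The point $z=1$ is excluded by hypothesis, so no further boundary issue arises.
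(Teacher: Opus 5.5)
Your proof is correct and is essentially the paper's argument. Both proofs reduce to the upper semicircle via the sign of $\mathrm{Im}\,F$ and conjugation, both settle $z=-1$ with \eqref{strongquotientfinite}, and both close the main case by the same two-case combination of self-monotonicity with Proposition~\ref{finitecross}; the only differences are that you compare at $\theta_2$ using the monotonicity of the curve of ${\bf k}$ where the paper compares at $\theta_1$ using that of ${\boldsymbol\ell}$, and that your continuity remark as $\theta\to\pi$ is unnecessary because the quotient bound at $-1$ already gives strictness.
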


\begin{proof}
First let $z_j=e^{i\theta_j}$ with $0<\theta_j<\pi$.  If the two values were equal and $\theta_2\leq\theta_1$, then \eqref{selfmonotonicity} and Proposition \ref{finitecross} would give
\[
\alpha_{\boldsymbol\ell}(\theta_2)
\leq\alpha_{\boldsymbol\ell}(\theta_1)
<\alpha_{\bf k}(\theta_1),
\]
a contradiction.  Thus $\theta_2>\theta_1$, but then
\[
\rho_{\boldsymbol\ell}(\theta_2)
<\rho_{\boldsymbol\ell}(\theta_1)
<\rho_{\bf k}(\theta_1),
\]
which is again impossible.

All coefficients are real, so the same conclusion holds on the lower semicircle by conjugation.  Values on opposite open semicircles have imaginary parts of opposite signs.  At $z=-1$, \eqref{strongquotientfinite} gives
\[
\frac{F_{\bf k}(-1)}{F_{\boldsymbol\ell}(-1)}>1,
\]
while both values are negative.  Hence they are unequal there as well, and comparison with nonreal points follows from the imaginary part.
\end{proof}

We next pass from finite to infinite indices.  For an infinite index
${\bf k}=(k_1,k_2,\ldots)$, write
\[
s_j({\bf k})=k_1+\cdots+k_j.
\]
If ${\bf k}$ and ${\boldsymbol\ell}$ are distinct infinite indices, write
${\bf k}\succ{\boldsymbol\ell}$ when, at their first unequal component, the component of ${\bf k}$ is smaller.

\begin{lem}\label{betamoment}
For a finite index ${\bf p}=(p_1,\ldots,p_r)$,
\begin{equation}\label{finitebeta}
A_{\bf p}(1)=\sum_{j=1}^{r}2^{-s_j({\bf p})}.
\end{equation}
For an infinite index ${\bf k}$,
\begin{equation}\label{infinitebeta}
A_{\bf k}(1)=\beta({\bf k})
=\sum_{j=1}^{+\infty}2^{-s_j({\bf k})}.
\end{equation}
Moreover, if ${\bf k}\succ{\boldsymbol\ell}$, then
\[
\beta({\bf k})>\beta({\boldsymbol\ell}).
\]
\end{lem}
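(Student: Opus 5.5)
Three steps: compute $A_{\bf p}(1)$ for finite ${\bf p}$ directly from \eqref{Adef}, pass to infinite indices by monotone convergence, and compare two infinite indices at their first disagreement.

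\textbf{Finite formula.} Take $n=1$, so $m=2$. Then
\[
A_{\bf p}(1)=2^{-p_1}\sum_{2\geq n_2\geq\cdots\geq n_r\geq1}\prod_{i\ge2}n_i^{-p_i}.
\]
Each admissible string $(n_2,\ldots,n_r)$ is determined by the last position $j\in\{1,\ldots,r\}$ at which the value $2$ occurs. Here $j=1$ means all inner variables equal $1$. This string contributes $2^{-(p_2+\cdots+p_j)}$. Summing over $j$ gives $\sum_{j=1}^r 2^{-s_j({\bf p})}$, which is \eqref{finitebeta}. Alternatively, one can induct using \eqref{ArecM} and \eqref{ArecC} at $n=1$:
\[
A_{(p,\boldsymbol\tau)}(1)=\tfrac12A_{(p-1,\boldsymbol\tau)}(1),\qquad A_{(1,\boldsymbol\tau)}(1)=\tfrac12\bigl(1+A_{\boldsymbol\tau}(1)\bigr).
\]
These reproduce the same sum.

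\textbf{Infinite formula.} For an infinite index, \eqref{Ainfty} defines $A_{\bf k}(1)$ as the limit of $A_{k_1,\ldots,k_r}(1)$. By the finite formula these are the partial sums of the convergent series $\sum_j 2^{-s_j({\bf k})}$, which is dominated by $\sum_j 2^{-j}$ since $s_j\ge j$. Hence $A_{\bf k}(1)=\beta({\bf k})$, giving \eqref{infinitebeta}.

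\textbf{Strict monotonicity.} Let the first unequal component be at position $i$, with $k_i<\ell_i$, and set $s=s_{i-1}$ (common to both indices). The terms $j<i$ agree. For the tails I will use the extremal estimates
\[
2^{-s}\sum_{j\ge i}2^{-(k_i+\cdots+k_j)} > 2^{-s}\,2^{-k_i},
\]
where the inequality is strict because the tail is infinite with positive terms, and
\[
2^{-s}\sum_{j\ge i}2^{-(\ell_i+\cdots+\ell_j)}\le 2^{-s}\sum_{j\ge i}2^{-\ell_i-(j-i)}=2^{-s}\,2^{1-\ell_i}\le 2^{-s}\,2^{-k_i}.
\]
The last step uses $\ell_i\ge k_i+1$. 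Together these give $\beta({\bf k})>\beta(\boldsymbol\ell)$.

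The only delicate point is this strictness in the comparison. It holds because the upper bound for $\boldsymbol\ell$ is attained only with all later components equal to $1$, whereas for ${\bf k}$ the strict inequality already comes from the remaining positive terms of the infinite tail.
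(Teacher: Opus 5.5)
Your proposal is correct and follows essentially the same route as the paper. It obtains the finite formula by summing over the length of the initial block of $2$'s, and the infinite formula by passing to the limit of partial sums. For the strict inequality, your explicit tail estimates at the first differing component are just an unpacked version of the paper's lexicographic comparison of nonterminating binary expansions with digits $1$ at the positions $s_j$.
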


\begin{proof}
When the first summation variable equals $2$, every later variable is either $1$ or $2$, and the entries equal to $2$ form an initial block.  Formula \eqref{finitebeta} follows by summing over the length of this block.  Passing to the limit gives \eqref{infinitebeta}. The series in \eqref{infinitebeta} is the binary number whose digits equal $1$ precisely at the positions $s_j({\bf k})$.  At the first unequal component, the next digit $1$ for ${\bf k}$ occurs earlier than the next digit $1$ for ${\boldsymbol\ell}$.  The usual lexicographic comparison of nonterminating binary expansions therefore gives the last assertion.
\end{proof}

\begin{lem}\label{weakconvergence}
Let ${\bf k}^{(r)}=(k_1,\ldots,k_r)$.  Then the probability measures
$\sigma_{{\bf k}^{(r)}}(t)dt$ converge weakly to the measure $\mu_{\bf k}$ in \eqref{infiniteStieltjes}.  Consequently,
\[
H_{{\bf k}^{(r)}}\rightarrow H_{\bf k}
\]
locally uniformly on $\Lambda$, and the same is true after differentiating any fixed number of times.
\end{lem}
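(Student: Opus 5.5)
The approach is to work with moments. Probability measures on the compact interval $[0,1]$ are determined by their moments, and weak convergence there is equivalent to convergence of every moment. By Proposition \ref{momentdensity}, the $n$-th moment of $\sigma_{{\bf k}^{(r)}}(t)\,dt$ is $A_{{\bf k}^{(r)}}(n)$. By \eqref{Ainfty} this tends to $A_{\bf k}(n)$ for each fixed $n$, and by \eqref{infiniteStieltjes} the limit is the $n$-th moment of $\mu_{\bf k}$. Hence $\int t^n\sigma_{{\bf k}^{(r)}}(t)\,dt\to\int t^n\,d\mu_{\bf k}(t)$ for all $n\geq0$.

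By the Weierstrass approximation theorem, polynomials are uniformly dense in $C([0,1])$. Since all measures involved have total mass $1$, a standard $\varepsilon/3$ argument upgrades moment convergence to $\int\varphi\,\sigma_{{\bf k}^{(r)}}\,dt\to\int\varphi\,d\mu_{\bf k}$ for every continuous $\varphi$. This is the weak convergence.

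Next, fix a compact $K\Subset\Lambda$. There is $\delta>0$ with $|1-tz|\geq\delta$ for all $z\in K$ and $t\in[0,1]$. To see this, note that $1-tz=0$ forces $z=1/t\in[1,+\infty)$, and the map $(t,z)\mapsto|1-tz|$ is continuous on the compact set $[0,1]\times K$, so it attains a positive minimum. Consequently the family $\{t\mapsto(1-tz)^{-1-j}:z\in K\}$ is uniformly bounded and equicontinuous in $t$ for each fixed $j$. Its dependence on $z$ is also uniformly Lipschitz on $K$. Weak convergence is uniform over such a family: cover $K$ by a finite $\varepsilon$-net and use the Lipschitz bound together with total mass $1$. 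This gives
\[
\sup_{z\in K}\left|\int_0^1\frac{t^j\sigma_{{\bf k}^{(r)}}(t)}{(1-tz)^{1+j}}\,dt-\int_{[0,1]}\frac{t^j\,d\mu_{\bf k}(t)}{(1-tz)^{1+j}}\right|\rightarrow0 .
\]
By \eqref{StieltjesH} and \eqref{infiniteStieltjes}, and differentiation under the integral sign, the case $j=0$ gives $H_{{\bf k}^{(r)}}\to H_{\bf k}$ locally uniformly on $\Lambda$. The $j$-th derivative is $j!$ times the $j$-th integral above, so the same conclusion holds after differentiating any fixed number of times.

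An alternative for the derivatives is to note that the functions are uniformly bounded by $\delta^{-1}$ on each $K$ and apply Weierstrass' theorem on derivatives of locally uniformly convergent holomorphic functions; either route works. There is no real obstacle here. The only point needing care is the uniformity in $z$, and it is handled by the positive distance of $K$ from the cut $[1,+\infty)$.
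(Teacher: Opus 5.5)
Your proposal is correct and is essentially the paper's argument. Moment convergence together with density of polynomials in $C[0,1]$ gives weak convergence; you do this with a direct $\varepsilon/3$ estimate, while the paper passes through subsequences and uniqueness of the moment problem, which is the same idea. Then the positive lower bound for $|1-tz|$ on $[0,1]\times K$ and a finite net in $z$ give locally uniform convergence of $H_{{\bf k}^{(r)}}$ and of each fixed derivative on $\Lambda$.
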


\begin{proof}
For each $n\geq 0$, the $n$-th moments converge by definition to $A_{\bf k}(n)$.  Every weakly convergent subsequence therefore has the same moments as $\mu_{\bf k}$.  Since polynomials are dense in $C[0,1]$, a probability measure on $[0,1]$ is determined by its moments.  Thus the whole sequence converges weakly.  For each compact $K\Subset\Lambda$, the kernels $(1-tz)^{-1}$ and all fixed $z$-derivatives form equicontinuous, uniformly bounded families in $t\in[0,1]$, uniformly for $z\in K$.  Weak convergence, followed by a finite-net argument in $z$, therefore gives the asserted locally uniform convergence, including after any fixed number of differentiations.
\end{proof}

\begin{lem}\label{noatomzero}
For every infinite index ${\bf k}$,
\begin{equation}\label{Hminusinfty}
H_{\bf k}(-x)\rightarrow 0,
\qquad
F_{\bf k}(-x)\rightarrow-1
\qquad(x\rightarrow+\infty).
\end{equation}
Equivalently, $\mu_{\bf k}$ has no atom at $0$.
\end{lem}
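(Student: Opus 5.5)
By \eqref{atomzero}, applied to $H_{\bf k}\in\mathfrak{T}$ with representing measure $\mu_{\bf k}$, we have $\lim_{x\to+\infty}H_{\bf k}(-x)=\mu_{\bf k}(\{0\})$. Since $F_{\bf k}=H_{\bf k}-1$, the two limits in \eqref{Hminusinfty} are equivalent to each other and to $\mu_{\bf k}(\{0\})=0$. So it suffices to prove $\mu_{\bf k}(\{0\})=0$. My plan is to bound the mass that the finite-depth densities $\sigma_{{\bf k}^{(r)}}$ place near $0$, uniformly in $r$, and then pass to the weak limit using Lemma \ref{weakconvergence}.

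The key estimate will come from the integrability identity \eqref{denintegrable}:
\[
\int_0^1\sigma_{{\bf k}^{(r)}}(t)\,dt\cdot\frac{1}{1}
=\int_0^1\frac{t\sigma_{{\bf k}^{(r)}}(t)}{t}\,dt .
\]
This is trivial as it stands, so I will use a different route. By Markov's inequality, for $0<\delta<1$,
\[
\int_0^\delta\sigma_{{\bf k}^{(r)}}(t)\,dt=1-\int_\delta^1\sigma_{{\bf k}^{(r)}}\,dt .
\]
This needs a lower bound on the mass near $1$, which is awkward. A cleaner approach uses the moment $A(1)$ together with monotone structure: every density $\sigma_{\bf k}=\mathsf{M}^{k_1-1}(\cdots)$ has a fixed shape.

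The route I prefer is to compare with a single simpler measure through the MLR order. Let ${\bf p}=(k_1)$. Then ${\bf k}^{(r)}\succ{\bf p}$, or the two are equal when $r=1$. By Proposition \ref{strictMLR}, the ratio $\sigma_{{\bf k}^{(r)}}/\sigma_{(k_1)}$ is increasing. It is a standard consequence of the MLR order that
\[
\int_0^\delta\sigma_{{\bf k}^{(r)}}\,dt\le\int_0^\delta\sigma_{(k_1)}\,dt\quad\text{for every }\delta .
\]
Indeed, an increasing likelihood ratio implies stochastic dominance, because the ratio is at most $1$ on an initial interval, and the ratio of partial integrals over $[0,\delta]$ is an average of the ratio, hence at most the total average, which is $1$. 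Now $\sigma_{(k_1)}=\mathsf{M}^{k_1-1}{\bf 1}=\frac{(-\log t)^{k_1-1}}{(k_1-1)!}$ is integrable, so $\int_0^\delta\sigma_{(k_1)}\to0$ as $\delta\to0$.

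To pass to the limit, take a continuous $\varphi_\delta$ with $\mathbf 1_{\{0\}}\le\varphi_\delta\le\mathbf 1_{[0,\delta]}$. Weak convergence (Lemma \ref{weakconvergence}) then gives
\[
\mu_{\bf k}(\{0\})\le\int\varphi_\delta\,d\mu_{\bf k}=\lim_r\int\varphi_\delta\sigma_{{\bf k}^{(r)}}\,dt\le\int_0^\delta\sigma_{(k_1)}\,dt .
\]
Letting $\delta\to0$ gives $\mu_{\bf k}(\{0\})=0$.

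The step I expect to be the main obstacle is justifying the dominance inequality $\int_0^\delta\sigma_{{\bf k}^{(r)}}\le\int_0^\delta\sigma_{(k_1)}$ from the MLR property. I will argue it via the averaging identity
\[
\frac{\int_0^\delta\sigma_{{\bf k}^{(r)}}}{\int_0^\delta\sigma_{(k_1)}}\le\frac{\int_0^1\sigma_{{\bf k}^{(r)}}}{\int_0^1\sigma_{(k_1)}}=1,
\]
which holds because the left side is a weighted average of the increasing ratio over $[0,\delta]$ while the right side is its average over $[0,1]$. The same monotonicity argument already appears in the proof of Proposition \ref{strictMLR}.
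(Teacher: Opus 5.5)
Your argument is correct and is essentially the paper's proof. Both compare $\sigma_{{\bf k}^{(r)}}$ with $\sigma_{(k_1)}$ through the monotone likelihood ratio of Proposition \ref{strictMLR}, integrate a decreasing test function against the two densities, and pass to the limit with Lemma \ref{weakconvergence}. The only difference is the test function: the paper uses the continuous decreasing function $(1+xt)^{-1}$ directly, which gives $H_{\bf k}(-x)\leq H_{(k_1)}(-x)$ without any cutoff step, whereas you use $\mathbf{1}_{[0,\delta]}$ and then a continuous sandwich $\varphi_\delta$. The abandoned false starts in your second paragraph can simply be deleted.
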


\begin{proof}
Let ${\bf p}=(k_1,\ldots,k_r)$.  If $r>1$, then ${\bf p}\succ(k_1)$, so Proposition \ref{strictMLR} shows that
$r(t)=\sigma_{\bf p}(t)/\sigma_{(k_1)}(t)$ is increasing.  If $\varphi$ is decreasing on $(0,1)$, then
\[
\int_0^1\varphi(t)\sigma_{\bf p}(t)dt
-\int_0^1\varphi(t)\sigma_{(k_1)}(t)dt
=\mathrm{Cov}_{\sigma_{(k_1)}dt}(\varphi,r)\leq 0.
\]
This follows, for example, by writing the covariance as one half of the double integral of
$(\varphi(t)-\varphi(s))(r(t)-r(s))$.
Taking $\varphi(t)=(1+xt)^{-1}$ gives
\[
0<H_{\bf p}(-x)\leq H_{(k_1)}(-x).
\]
Letting $r\rightarrow+\infty$ and using Lemma \ref{weakconvergence} gives the same inequality for $H_{\bf k}$.  The measure representing $H_{(k_1)}$ has a positive density on $(0,1)$ and no atom at $0$, so dominated convergence gives
$H_{(k_1)}(-x)\rightarrow0$.  This proves \eqref{Hminusinfty}.
\end{proof}

For an infinite index, equations \eqref{infiniteStieltjes} and \eqref{infinitebeta} give
\begin{equation}\label{Finfiniteint}
F_{\bf k}(z)
=z\int_{[0,1]}\frac{t\,d\mu_{\bf k}(t)}{1-tz},
\end{equation}
and
\begin{equation}\label{Fhatinfinite}
\widehat F_{\bf k}(z)
=\frac{F_{\bf k}(z)}{\beta({\bf k})z}
=\int_{[0,1]}\frac{t\,d\mu_{\bf k}(t)/\beta({\bf k})}{1-tz}
\in\mathfrak{T}.
\end{equation}
The representing probability measure in \eqref{Fhatinfinite} has no atom at $0$ (the factor $t$ annihilates any mass there) and has positive mass on $(0,1]$.  Therefore $\widehat F_{\bf k}$ has no zeros in $\Lambda$, and $F_{\bf k}$ has only the simple zero at $0$.

\begin{prop}\label{infiniteP}
For every infinite index ${\bf k}$,
\[
P_{\bf k}(z)=\frac{zF_{\bf k}'(z)}{F_{\bf k}(z)}
\]
belongs to $\mathfrak{T}$ and is not constant.  Hence, for $0<\theta<\pi$,
\begin{equation}\label{infiniteSelf}
\frac{d}{d\theta}\arg F_{\bf k}(e^{i\theta})>0,
\qquad
\frac{d}{d\theta}|F_{\bf k}(e^{i\theta})|<0.
\end{equation}
\end{prop}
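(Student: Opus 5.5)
The plan is to obtain $P_{\bf k}$ for an infinite index as a locally uniform limit of the finite functions $P_{{\bf k}^{(r)}}$, ${\bf k}^{(r)}=(k_1,\ldots,k_r)$, and then to exclude the constant case using the second Taylor coefficient.

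\textbf{Step 1: passing to the limit.} By Lemma \ref{weakconvergence}, $F_{{\bf k}^{(r)}}\to F_{\bf k}$ and $F_{{\bf k}^{(r)}}'\to F_{\bf k}'$ locally uniformly on $\Lambda$. Write
\[
P_{{\bf k}^{(r)}}(z)=\frac{zF'_{{\bf k}^{(r)}}(z)}{A_{{\bf k}^{(r)}}(1)\,z\,\widehat F_{{\bf k}^{(r)}}(z)}.
\]
We have $A_{{\bf k}^{(r)}}(1)\to\beta({\bf k})>0$, and $\widehat F_{{\bf k}^{(r)}}\to\widehat F_{\bf k}$ locally uniformly, because the defining measures $t\sigma_{{\bf k}^{(r)}}(t)dt/A_{{\bf k}^{(r)}}(1)$ converge weakly. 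Since $\widehat F_{\bf k}$ has no zeros in $\Lambda$ (this follows from \eqref{Fhatinfinite} and the remark after it), the quotients converge locally uniformly on $\Lambda$ to $P_{\bf k}$, with the singularity at $0$ removed. Each $P_{{\bf k}^{(r)}}$ lies in $\mathfrak{T}$ by Proposition \ref{logderivative}, and $\mathfrak{T}$ is closed under locally uniform convergence on $\Lambda$. Hence $P_{\bf k}\in\mathfrak{T}$.

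\textbf{Step 2: nonconstancy.} I expect this to be the main obstacle, because the limit could in principle collapse the measure to $\delta_0$. As in the finite case, $P_{\bf k}(z)=1+\frac{A_{\bf k}(2)}{A_{\bf k}(1)}z+O(z^2)$. It therefore suffices to show $A_{\bf k}(2)>0$. This holds because the term with outer variable $3$ and all later variables equal to $1$ contributes at least $3^{-k_1}$, exactly as in the proof of Theorem \ref{cm}; equivalently, $A_{\bf k}(2)=\int t^2\,d\mu_{\bf k}>0$, since $\mu_{\bf k}((0,1])>0$ by Lemma \ref{noatomzero}. So $P_{\bf k}$ is not constant, and its representing measure $\eta_{\bf k}$ satisfies $\eta_{\bf k}((0,1])>0$. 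This is because a measure in $\mathfrak{T}$ concentrated at $0$ gives the constant $1$.

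\textbf{Step 3: monotonicity on the semicircle.} Repeat the finite computation. For $0<\theta<\pi$, $F_{\bf k}(e^{i\theta})$ is defined as the boundary value of \eqref{Finfiniteint}. Here $e^{i\theta}\in\Lambda$, so no boundary issue arises, since $\Lambda$ contains $S^1\setminus\{1\}$. Its imaginary part is positive by the analogue of \eqref{ImFpositive}, using $\mu_{\bf k}((0,1])>0$. Thus a continuous branch $\arg F_{\bf k}(e^{i\theta})\in(0,\pi)$ exists, and
\[
\frac{d}{d\theta}\log F_{\bf k}(e^{i\theta})=iP_{\bf k}(e^{i\theta}).
\]
Now use the Stieltjes representation of $P_{\bf k}$ with $\eta_{\bf k}$. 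This gives $\mathrm{Re}\,P_{\bf k}(e^{i\theta})>0$, since the kernel $(1-t\cos\theta)/|1-te^{i\theta}|^2$ is positive. It also gives $\mathrm{Im}\,P_{\bf k}(e^{i\theta})>0$, since $\eta_{\bf k}((0,1])>0$. These yield \eqref{infiniteSelf} exactly as in \eqref{selfmonotonicity}.
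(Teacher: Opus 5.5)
Your proposal is correct and follows the paper's proof essentially step by step. The paper also obtains $P_{\bf k}$ as the locally uniform limit on $\Lambda$ of the finite $P_{(k_1,\ldots,k_r)}$ via Lemma \ref{weakconvergence}. It handles the origin through $B_r=F_{(k_1,\ldots,k_r)}/z$, which is your $A_{{\bf k}^{(r)}}(1)\widehat F_{{\bf k}^{(r)}}$, so that $P_{{\bf k}^{(r)}}=F'_{{\bf k}^{(r)}}/B_r$ with zero-free limit denominator. It then invokes closedness of $\mathfrak{T}$, rules out constancy through the coefficient $A_{\bf k}(2)/A_{\bf k}(1)$, and reruns the computation behind \eqref{selfmonotonicity}. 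Your explicit bound $A_{\bf k}(2)\geq 3^{-k_1}>0$ supplies a detail the paper only asserts.
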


\begin{proof}
By Lemma \ref{weakconvergence}, the functions and their derivatives attached to the finite prefixes converge locally uniformly on $\Lambda$.  To handle the common zero at the origin, write
\[
B_r(z)=\frac{F_{(k_1,\ldots,k_r)}(z)}z,
\qquad B(z)=\frac{F_{\bf k}(z)}z,
\]
with the removable values filled in at $0$.  Then $B_r$ and their derivatives converge locally uniformly to $B$, and \eqref{Fhatinfinite} shows that $B$ has no zero on $\Lambda$.  Hence
\[
P_{(k_1,\ldots,k_r)}(z)=1+z\frac{B_r'(z)}{B_r(z)}\quad
\rightarrow \quad1+z\frac{B'(z)}{B(z)}=P_{\bf k}(z)
\]
locally uniformly on $\Lambda$.  The class $\mathfrak{T}$ is closed under such limits, so $P_{\bf k}\in\mathfrak{T}$.  Its expansion begins with
\[
P_{\bf k}(z)=1+\frac{A_{\bf k}(2)}{A_{\bf k}(1)}z+O(z^2),
\]
and $A_{\bf k}(2)>0$, so it is not constant.  The proof of \eqref{selfmonotonicity}, with the representing measure of $P_{\bf k}$ in place of the finite one, gives \eqref{infiniteSelf}.
\end{proof}

\begin{prop}\label{infinitecross}
If ${\bf k}\succ{\boldsymbol\ell}$, then, for every $0<\theta<\pi$,
\[
|F_{\bf k}(e^{i\theta})|>|F_{\boldsymbol\ell}(e^{i\theta})|,
\qquad
\arg F_{\bf k}(e^{i\theta})>
\arg F_{\boldsymbol\ell}(e^{i\theta}).
\]
\end{prop}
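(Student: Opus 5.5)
The plan is to copy the proof of Proposition \ref{finitecross}: obtain the quotient $F_{\bf k}/F_{\boldsymbol\ell}$ in the form \eqref{strongquotientfinite}, with a constant $c>1$ and a measure that has positive mass on $(0,1]$. We get there by passing to the limit along finite prefixes. The strictness will not come from the limiting MLR property, which could degenerate. It will come from the inequality $\beta({\bf k})>\beta({\boldsymbol\ell})$ of Lemma \ref{betamoment}, combined with the absence of an atom at $0$ from Lemma \ref{noatomzero}.

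\emph{Step 1 (finite prefixes).} Let $i$ be the first position where ${\bf k}$ and ${\boldsymbol\ell}$ differ, so $k_i<\ell_i$. For every $r\geq i$, the prefixes satisfy ${\bf k}^{(r)}\succ{\boldsymbol\ell}^{(r)}$ in the finite order. By \eqref{qfinite},
\[
q_r(z)=\frac{\widehat F_{{\bf k}^{(r)}}(z)}{\widehat F_{{\boldsymbol\ell}^{(r)}}(z)}\in\mathfrak{T}.
\]

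\emph{Step 2 (limit of the quotients).} Write $\widehat F_{{\bf p}}=B_{\bf p}/A_{\bf p}(1)$ with $B_{\bf p}(z)=F_{\bf p}(z)/z$, as in the proof of Proposition \ref{infiniteP}. Lemma \ref{weakconvergence} gives $B_{{\bf k}^{(r)}}\to B_{\bf k}$ locally uniformly on $\Lambda$, and likewise for ${\boldsymbol\ell}$. The moments converge, $A_{{\bf k}^{(r)}}(1)\to\beta({\bf k})$, by Lemma \ref{betamoment}. The limit denominator $\widehat F_{\boldsymbol\ell}$ has no zeros in $\Lambda$ by \eqref{Fhatinfinite}. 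Hence $q_r\to q:=\widehat F_{\bf k}/\widehat F_{\boldsymbol\ell}$ locally uniformly on $\Lambda$, and since $\mathfrak{T}$ is closed under such limits, $q\in\mathfrak{T}$.

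\emph{Step 3 (the atom at $0$).} Put
\[
c=\frac{\beta({\bf k})}{\beta({\boldsymbol\ell})},
\]
so that $c>1$ by Lemma \ref{betamoment}. Then $F_{\bf k}/F_{\boldsymbol\ell}=c\,q$. By \eqref{Hminusinfty}, both $F_{\bf k}(-x)$ and $F_{\boldsymbol\ell}(-x)$ tend to $-1$, so $q(-x)\to 1/c$. By \eqref{atomzero}, the representing probability measure $\nu$ of $q$ therefore has an atom of mass exactly $1/c$ at $0$. This gives
\[
\frac{F_{\bf k}(z)}{F_{\boldsymbol\ell}(z)}=1+c\int_{(0,1]}\frac{d\nu(t)}{1-tz},\qquad \nu((0,1])=1-\frac1c>0.
\]

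\emph{Step 4 (conclusion).} The finish is verbatim the argument of Proposition \ref{finitecross}. For $z=e^{i\theta}$ with $0<\theta<\pi$, the kernel $(1-tz)^{-1}$ has positive real and imaginary parts for every $t>0$. Since $\nu((0,1])>0$, the quotient has real part $>1$ and positive imaginary part, so its modulus exceeds $1$ and its principal argument lies in $(0,\pi/2)$. Both $F_{\bf k}(e^{i\theta})$ and $F_{\boldsymbol\ell}(e^{i\theta})$ lie in the open upper half-plane, by \eqref{Finfiniteint} together with the positivity of the imaginary part of the kernel and $\mu_{\bf k}((0,1])>0$. Their argument difference is therefore the principal argument of the quotient, which yields both inequalities.

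The main obstacle is the strictness. Weak limits may destroy the strict MLR, and that is why the argument routes everything through the atom computation in Step 3. The point to check carefully is that $q(-x)\to 1/c$ really uses both infinite-index limits $F(-x)\to-1$ (Lemma \ref{noatomzero}) and not merely finite-index facts. Everything else is routine locally uniform convergence.
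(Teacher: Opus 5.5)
Your proposal is correct and follows the paper's proof essentially step for step. You form the finite-prefix quotients $q_r\in\mathfrak{T}$ from \eqref{qfinite}, pass to the locally uniform limit via Lemma \ref{weakconvergence} and the closedness of $\mathfrak{T}$, and read off the atom of mass $1/c$ at $0$ from Lemma \ref{noatomzero}, with $c=\beta({\bf k})/\beta({\boldsymbol\ell})>1$; the finish is then that of Proposition \ref{finitecross}. The extra details you supply (nonvanishing of the limit denominator, convergence of $A_{{\bf k}^{(r)}}(1)$, and the location of $F_{\bf k}(e^{i\theta})$ in the upper half-plane via \eqref{Finfiniteint}) are exactly the points the paper leaves implicit.
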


\begin{proof}
Let
\[
c=\frac{\beta({\bf k})}{\beta({\boldsymbol\ell})}>1.
\]
For all sufficiently large $r$, the finite prefixes satisfy
$(k_1,\ldots,k_r)\succ(\ell_1,\ldots,\ell_r)$.  The functions
\[
q_r(z)=
\frac{\widehat F_{(k_1,\ldots,k_r)}(z)}
{\widehat F_{(\ell_1,\ldots,\ell_r)}(z)}
\]
belong to $\mathfrak{T}$ by \eqref{qfinite}.  Lemma \ref{weakconvergence} gives the locally uniform limit
\[
q(z)=\frac1c\frac{F_{\bf k}(z)}{F_{\boldsymbol\ell}(z)}
\in\mathfrak{T}.
\]
By Lemma \ref{noatomzero},
\[
q(-x)\quad\rightarrow\quad\frac1c.
\]
Thus the representing probability measure $\nu$ of $q$ has an atom of mass $1/c$ at $0$, and
\begin{equation}\label{strongquotientinfinite}
\frac{F_{\bf k}(z)}{F_{\boldsymbol\ell}(z)}
=1+c\int_{(0,1]}\frac{d\nu(t)}{1-tz},
\qquad
\nu((0,1])=1-\frac1c>0.
\end{equation}
The argument used after \eqref{strongquotientfinite} applies without change and proves both strict inequalities.
\end{proof}

\begin{Thm}\label{infiniteseparation}
Let ${\bf k}$ and ${\boldsymbol\ell}$ be distinct infinite indices.  Then
\[
H_{\bf k}(z_1)\neq H_{\boldsymbol\ell}(z_2)
\]
for all $|z_1|=|z_2|=1$ with $z_1,z_2\neq 1$.
For a fixed infinite index ${\bf k}$, the map
\[
z\mapsto H_{\bf k}(z)
\]
is one-to-one on $S^1\setminus\{1\}$.
\end{Thm}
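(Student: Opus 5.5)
Since $H_{\bf k}=1+F_{\bf k}$, it suffices to prove both statements for $F$. The argument mirrors the proof of Theorem \ref{finiteseparation}, now using the infinite-index ingredients: Proposition \ref{infiniteP} for self-monotonicity and Proposition \ref{infinitecross} for comparing two indices. The boundary values on $S^1\setminus\{1\}$ come from the representation \eqref{Finfiniteint}. For $z=e^{i\theta}$ with $\theta\neq 0$, the denominator $1-tz$ is bounded away from zero uniformly in $t\in[0,1]$. Hence the integral defines a continuous function on $S^1\setminus\{1\}$ that agrees with the radial limits. Moreover, \eqref{ImFpositive} carries over with $\mu_{\bf k}$ in place of $\sigma_{\bf k}(t)dt$. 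Since $\mu_{\bf k}$ has positive mass on $(0,1]$ (because $\beta({\bf k})>0$), we get $\mathrm{Im}\,F_{\bf k}(e^{i\theta})>0$ for $0<\theta<\pi$. Its sign is reversed on the lower semicircle, and $F_{\bf k}(-1)<0$.

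\textbf{Two distinct indices.} Since ${\bf k}\neq{\boldsymbol\ell}$, after relabeling we may assume ${\bf k}\succ{\boldsymbol\ell}$.

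First take $z_j=e^{i\theta_j}$ with $0<\theta_j<\pi$. We argue exactly as in the proof of Theorem \ref{finiteseparation}, using the strict increase of $\alpha_{\boldsymbol\ell}$ and strict decrease of $\rho_{\boldsymbol\ell}$ from \eqref{infiniteSelf}, together with Proposition \ref{infinitecross}. If $\theta_2\leq\theta_1$, then $\alpha_{\boldsymbol\ell}(\theta_2)\leq\alpha_{\boldsymbol\ell}(\theta_1)<\alpha_{\bf k}(\theta_1)$, so the arguments differ. If $\theta_2>\theta_1$, then $\rho_{\boldsymbol\ell}(\theta_2)<\rho_{\boldsymbol\ell}(\theta_1)<\rho_{\bf k}(\theta_1)$, so the moduli differ. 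One point needs care: \eqref{infiniteSelf} asserts pointwise positivity of derivatives. To pass to strict monotonicity I will note that $\arg F$ and $\log|F|$ are real-analytic (indeed $C^1$) in $\theta$ on $(0,\pi)$, since $F_{\bf k}$ is holomorphic on a neighbourhood of the open arc inside $\Lambda$.

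The lower semicircle follows by conjugation, because the coefficients are real. Points on opposite open semicircles are separated by the sign of the imaginary part. Values at $z=-1$ are real and negative, so they cannot equal values at nonreal points. The remaining case is $z_1=z_2=-1$. Here I will use \eqref{strongquotientinfinite}: at $z=-1$ the integral term is positive, so $F_{\bf k}(-1)/F_{\boldsymbol\ell}(-1)>1$, and the two values differ.

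\textbf{Injectivity for fixed ${\bf k}$.} On the open upper semicircle, $\alpha_{\bf k}$ is strictly increasing by \eqref{infiniteSelf}, so $F_{\bf k}$ is injective there. The same holds on the lower semicircle by conjugation. Points in different half-planes, or at $z=-1$, are separated by the sign of the imaginary part, as above.

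The main obstacle is already handled by the cited propositions: it is the strictness of the comparisons, which rests on the atom at $0$ having mass exactly $1/c<1$ via Lemma \ref{noatomzero}. What remains to watch is that the continuity and boundary-value identification hold for the infinite-index measure $\mu_{\bf k}$, which may have atoms, for instance at $t=1$. This is harmless, since the kernel $(1-tz)^{-1}$ stays bounded at $t=1$ for $z\neq 1$.
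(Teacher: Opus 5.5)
Your proposal is correct and, for distinct indices, follows the paper's proof essentially verbatim. You reduce to $F=H-1$, assume ${\bf k}\succ{\boldsymbol\ell}$, and run the two-case argument of Theorem~\ref{finiteseparation} using Propositions~\ref{infiniteP} and~\ref{infinitecross}. You then handle the lower semicircle by conjugation, separate opposite semicircles by the sign of the imaginary part, and settle $z_1=z_2=-1$ via \eqref{strongquotientinfinite}.

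The one deviation is the fixed-index statement. The paper simply cites univalence of $H_{\bf k}$ on $\mathcal{D}\supset S^1\setminus\{1\}$ (Theorem~\ref{cm}$(iv)$). You instead rederive injectivity on the circle from the strict increase of $\arg F_{\bf k}$, together with conjugation and the sign of $\mathrm{Im}\,F_{\bf k}$. That is equally valid, just a little longer than necessary.
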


\begin{proof}
After interchanging the indices if necessary, assume
${\bf k}\succ{\boldsymbol\ell}$.  Proposition \ref{infiniteP} and Proposition \ref{infinitecross} reproduce verbatim the two-case argument in the proof of Theorem \ref{finiteseparation}: on the upper semicircle, equality contradicts either strict increase of the argument or strict decrease of the modulus.  Conjugation handles the lower semicircle, opposite semicircles are separated by the sign of the imaginary part, and at $-1$ equation \eqref{strongquotientinfinite} gives a quotient strictly greater than $1$.  Since $H=1+F$, the same conclusion holds for $H$. For a fixed index, injectivity follows directly from the univalence of $H_{\bf k}$ on $\mathcal{D}$, because $S^1\setminus\{1\}\subset\mathcal{D}$.
\end{proof}

\section{The polylogarithm-star correspondence}\label{sec:correspondence}
We now turn the separation theorem into a global parametrization of the half-plane.  The radial parameter is supplied by the first moment.

\begin{prop}\label{binarycoordinate}
The map
\[
\beta:\widehat{\mathcal{T}}\rightarrow(0,1],
\qquad
{\bf k}\mapsto\sum_{j=1}^{+\infty}2^{-s_j({\bf k})}
\]
is an order-preserving bijection, where the index set is ordered as in Section \ref{sec:separation} and $(0,1]$ has its usual order.
Moreover,
\[
\begin{array}{lll}
\beta({\bf k})>\frac12\quad&\Leftrightarrow\quad &k_1=1,\\[2mm]
\beta({\bf k})=\frac12&\Leftrightarrow&{\bf k}=(2,\{1\}^\infty),\\[2mm]
\beta({\bf k})<\frac12&\Leftrightarrow&{\bf k}\in\mathcal{T}.
\end{array}
\]
\end{prop}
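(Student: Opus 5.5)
The plan is to identify $\beta$ with the classical correspondence between infinite positive-integer sequences and nonterminating binary expansions. Given ${\bf k}$, the partial sums $s_1({\bf k})<s_2({\bf k})<\cdots$ form a strictly increasing infinite sequence of positive integers. Conversely, every infinite subset $\{s_1<s_2<\cdots\}\subseteq\mathbb{Z}^+$ arises from exactly one ${\bf k}$, namely $k_1=s_1$ and $k_j=s_j-s_{j-1}$. So $\beta({\bf k})$ is the real number whose binary expansion $0.d_1d_2\ldots$ has $d_m=1$ exactly when $m\in\{s_j({\bf k})\}$. This expansion has infinitely many ones, so it is nonterminating.

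It is standard that every $x\in(0,1]$ has a unique binary expansion with infinitely many digits equal to $1$. For dyadic rationals one takes the expansion ending in $0111\ldots$, and $1=0.111\ldots$. The number $0$ is excluded because its only expansion has no ones. This gives bijectivity of $\beta$ onto $(0,1]$. For the uniqueness I would argue briefly: if two different nonterminating expansions first differ at position $m$, the one with digit $1$ there is at least $2^{-m}$ plus a positive tail. The other is at most $2^{-m}$, with equality only if it ends in all ones after position $m$, which still leaves a strict inequality because the first tail is positive. The same computation gives monotonicity, and strict order preservation is already Lemma \ref{betamoment}. So $\beta$ is a strictly increasing bijection between totally ordered sets, hence an order isomorphism. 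I would also check that the order $\succ$ on $\widehat{\mathcal{T}}$ is total on distinct infinite indices, which is immediate since distinct sequences have a first unequal component.

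For the trichotomy I will compute directly. If $k_1=1$, then $\beta({\bf k})=\tfrac12+\sum_{j\ge2}2^{-s_j}>\tfrac12$. If $k_1\ge2$, then $\beta({\bf k})\le\sum_{m\ge2}2^{-m}=\tfrac12$. Equality holds iff every position $m\ge2$ is a digit $1$, that is, $s_j=j+1$ for all $j$, which means ${\bf k}=(2,1,1,\ldots)$. Hence $\beta<\tfrac12$ iff $k_1\ge2$ and ${\bf k}\ne(2,\{1\}^\infty)$, which is exactly the definition of $\mathcal{T}$. Alternatively, these three cases follow from order preservation, since $(2,\{1\}^\infty)$ sits exactly at $\tfrac12$.

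The only point needing care is surjectivity at dyadic rationals: one must use the expansion with infinitely many ones rather than the terminating one. This is the step I would state explicitly. Everything else is routine.
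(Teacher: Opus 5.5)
Your proposal is correct and follows essentially the same route as the paper: you identify $\beta({\bf k})$ with the nonterminating binary expansion having ones exactly at the positions $s_j({\bf k})$, use uniqueness of such expansions for bijectivity, cite Lemma~\ref{betamoment} for order preservation, and read off the trichotomy from the first binary digit, the expansion $0.0111\ldots$ of $\tfrac12$, and the definition of $\mathcal{T}$. Your explicit uniqueness argument and your direct bound $\beta({\bf k})\leq\tfrac12$ when $k_1\geq2$ simply fill in details that the paper treats as standard.
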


\begin{proof}
Associate with ${\bf k}$ the binary expansion whose digit in position $m$ is $1$ exactly when $m=s_j({\bf k})$ for some $j$.  Because the sequence of partial sums is infinite, this binary expansion contains infinitely many digits equal to $1$.  Conversely, every $x\in(0,1]$ has a unique binary expansion containing infinitely many $1$'s: at a dyadic rational one chooses the nonterminating expansion, and $1$ is written as $0.111\ldots$ in base $2$.  The successive gaps between the positions of the $1$'s give the positive integers $k_1,k_2,\ldots$.  This proves bijectivity.  Order preservation was proved in Lemma \ref{betamoment}. The first binary digit is $1$ precisely when $k_1=1$, which gives the first equivalence.  The nonterminating binary expansion of $1/2$ is $0.0111\ldots$, corresponding to $(2,\{1\}^\infty)$.  The last equivalence follows from the definition of $\mathcal{T}$.
\end{proof}

For $b\in(0,1]$, let ${\bf k}(b)$ be the unique index satisfying
$\beta({\bf k}(b))=b$.  We shall need continuity with respect to this symbolic coordinate.

\begin{prop}\label{indexcontinuity}
The map
\[
(b,z)\mapsto H_{{\bf k}(b)}(z)
\]
is jointly continuous on $(0,1]\times\mathcal{D}$, locally uniformly in $z$ on compact subsets of $\mathcal{D}$.
It extends continuously to the natural unit-circle parameter domain
\[
\Omega=\big((0,1]\times(S^1\setminus\{1\})\big)
\cup\big((0,\tfrac12)\times\{1\}\big).
\]
In particular, for $0<b_0<1/2$, the finite boundary value at $(b_0,1)$ is continuous under all approaches through $\Omega$.
\end{prop}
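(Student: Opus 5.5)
The plan is to reduce everything to coefficientwise control via Lemma \ref{tailosc}, and then propagate that control through the representations already available. There are three steps: coefficient continuity in $b$, joint continuity on $\mathcal{D}$, and extension to $\Omega$ on the circle and near $z=1$.

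\emph{Step 1: each coefficient is continuous in $b$.} Fix $n$ and $b_0\in(0,1]$. Since $\beta$ is an order-preserving bijection onto $(0,1]$ (Proposition \ref{binarycoordinate}), for every $r$ the set of $b$ whose index shares the prefix $(k_1(b_0),\ldots,k_r(b_0))$ contains a one-sided neighbourhood of $b_0$. It is a dyadic interval: the binary expansion agrees with that of $b_0$ up to position $s_r$.

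If $b_0$ is not dyadic, this set is a full neighbourhood. If $b_0$ is dyadic, we use the nonterminating expansion. Indices with the same prefix then approximate $b_0$ from the left. From the right the digits differ, but the indices are $(k_1,\ldots,k_{j-1},k_j-1,q,\ldots)$ with $q\to\infty$, or similar. These are controlled by Proposition \ref{speciallimits} (ii),(iii) and Lemma \ref{tailosc}: one shows the coefficient $A_{{\bf k}(b)}(n)$ tends to the same limit. Concretely, a large first-differing component contributes an $n$-th coefficient change bounded by something like $(n+1)^{-q}\cdot$const.

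Combined with the uniform oscillation bound $1-A_{\{1\}^r}(n)\to0$ of Lemma \ref{tailosc}, this gives continuity of $b\mapsto A_{{\bf k}(b)}(n)$ for every $n$.

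\emph{Step 2: joint continuity on $\mathcal{D}$.} Since $0<A\le1$, the power series converge uniformly on compact subsets of the disc, so $(b,z)\mapsto H_{{\bf k}(b)}(z)$ is jointly continuous for $|z|<1$. For general compact $K\subset\mathcal{D}$, we use the bound $|H_{\bf k}|\le 1/\varepsilon$ from Proposition \ref{exi}, which is uniform in the index. A Vitali/normal-families argument then applies: any sequence $b_j\to b_0$ has a subsequence converging locally uniformly on $\mathcal{D}$, and the limit is identified on the disc. Equicontinuity on $K$ gives joint continuity.

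Alternatively, Step 1 plus Hausdorff uniqueness gives weak convergence $\mu_{{\bf k}(b_j)}\to\mu_{{\bf k}(b_0)}$. As in Lemma \ref{weakconvergence}, this yields uniform convergence of $H$ on compacts of $\Lambda$, hence on $S^1\setminus\{1\}$ locally. That handles the first part of $\Omega$ directly, since points of $S^1\setminus\{1\}$ lie in $\Lambda$ with kernels bounded away from singularity.

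\emph{Step 3: extension near $z=1$ for $b_0<1/2$.} This is the main obstacle, since $1\notin\Lambda$ and the kernel $(1-tz)^{-1}$ blows up at $t=1$. The plan is to use a uniform majorant. For $b$ near $b_0<1/2$ the index lies in $\mathcal{T}$, with $k_1\ge2$ and not $(2,1,1,\ldots)$. The tail bounds in Proposition \ref{exi} then give $A_{{\bf k}(b)}(n)\le C\,a_n$ with $\sum a_n<\infty$, uniformly for $b$ in a small interval.

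First choose $b$ in $[b_0-\delta,b_0+\delta]\subset(0,1/2)$. The ordering of Proposition \ref{binarycoordinate} and monotonicity of coefficients in the order give $A_{{\bf k}(b)}(n)\le A_{{\bf k}(b_0+\delta)}(n)$. This needs checking: it holds coefficientwise when the indices are comparable, via Proposition \ref{strictMLR} and the covariance argument with $\varphi=t^n$, but with the direction reversed. So we bound by the smaller-$\beta$ endpoint accordingly.

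The endpoint index lies in $\mathcal{T}$, so $\sum_n A(n)=H(1)<\infty$. Then Weierstrass M-test on $|z|\le1$ gives uniform convergence of the series, continuous in $(b,z)$ by Step 1. Approaches from inside $\mathcal{D}$ near $1$ are handled by the Stieltjes form with the measure majorized, or by Abel summation using the same monotone majorant.
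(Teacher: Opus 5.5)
Steps 1 and 2 follow the paper's route: prefix cylinders with Lemma \ref{tailosc} give coefficientwise continuity, then the index-uniform bound of Proposition \ref{exi} and Vitali finish. However, the dyadic case of Step 1 is not actually proved.

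Approaching $b_0=\sum_{j\le r}2^{-s_j({\bf p})}$ from the right, the indices are $({\bf p},q,\boldsymbol\gamma)$ with $q\to\infty$ and an \emph{arbitrary} tail $\boldsymbol\gamma$. You therefore need convergence to $A_{\bf p}(n)$ uniformly in $\boldsymbol\gamma$, and the tools you cite do not give it. Lemma \ref{tailosc} with prefix $({\bf p},q)$ only bounds the tail effect by $1-A_{\{1\}^{r+1}}(n)$, which does not shrink as $q$ grows. Proposition \ref{speciallimits}(iii) only treats the finite indices $({\bf p},q)$. The missing estimate comes from summing the tail with Lemma \ref{lim}: for fixed $n_{r+1}=j$, the tail contributes at most $j$, so
\[
0\le A_{({\bf p},q,\boldsymbol\gamma)}(n)-A_{\bf p}(n)\le A_{({\bf p},q-1)}(n)-A_{\bf p}(n)\le\sum_{j=2}^{n+1}j^{1-q}.
\]
This is governed by $2^{1-q}$, not $(n+1)^{-q}$. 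Alternatively, the monotonicity you use in Step 3 works here too. For $b_0<b\le\beta({\bf p},q,1,1,\ldots)$ it squeezes $A_{{\bf k}(b)}(n)$ between $A_{\bf p}(n)$ and $A_{({\bf p},q,1,1,\ldots)}(n)=A_{({\bf p},q-1)}(n)$, and the latter tends to $A_{\bf p}(n)$.

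In Step 3, the inequality you first write, $A_{{\bf k}(b)}(n)\le A_{{\bf k}(b_0+\delta)}(n)$, is the correct one. Your subsequent reversal is wrong. If ${\bf k}\succ\boldsymbol\ell$, Proposition \ref{strictMLR} makes $\sigma_{\bf k}/\sigma_{\boldsymbol\ell}$ increasing, and $t^n$ is also increasing. Hence $A_{\bf k}(n)-A_{\boldsymbol\ell}(n)$, being the covariance of these two functions under $\sigma_{\boldsymbol\ell}(t)\,dt$, is nonnegative. The opposite sign in Lemma \ref{noatomzero} arises only because the test function $(1+xt)^{-1}$ there is decreasing.

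So coefficients are nondecreasing in $b$. Compare $A_{(1)}(n)=\frac1{n+1}>A_{(2)}(n)=\frac1{(n+1)^2}$, or Corollary \ref{firstmomentstrict}. Bounding by the smaller-$\beta$ endpoint would therefore assert a false inequality. The majorant must be the larger endpoint. This is exactly why you need $b_0+\delta<\frac12$: then ${\bf k}(b_0+\delta)\in\mathcal{T}$ and $\sum_nA_{{\bf k}(b_0+\delta)}(n)<\infty$, and this is where the hypothesis $b_0<\frac12$ enters. For infinite indices, apply Proposition \ref{strictMLR} to prefixes longer than the first differing position, then let the length tend to infinity.

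Once corrected, your Step 3 is a genuinely different and cleaner route than the paper's. The paper builds summable majorants case by case, using only Lemma \ref{lim}:
\begin{itemize}
\item first component at least $3$;
\item first component $2$, with a later component at least $2$ in a common prefix;
\item the lower side of a dyadic point;
\item the upper side of a dyadic point, via $A_{({\bf p},Q-1)}(n)$.
\end{itemize}
Your monotonicity gives the single majorant $A_{{\bf k}(b_0+\delta)}(n)$ on the whole interval. The Weierstrass M-test on $[b_0-\delta,b_0+\delta]\times\overline{\mathcal U}$ then finishes the argument. You should also note that the boundary sum equals $H_{{\bf k}(b)}(z)$: by Abel's theorem for $z\neq1$, and by monotone convergence in the depth at $z=1$.

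Your sentence appealing to Proposition \ref{exi} for a uniform majorant then becomes unnecessary. As stated it was unjustified near dyadic points anyway, since those bounds depend on the index. The cost of your route is reliance on the likelihood-ratio machinery of Section \ref{sec:separation}; the paper's argument is more elementary but case-heavy.
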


\begin{proof}
For a finite prefix ${\bf p}=(p_1,\ldots,p_r)$, put
\[
b_{\bf p}=\sum_{j=1}^{r}2^{-s_j({\bf p})}.
\]
The set of binary parameters of infinite extensions of ${\bf p}$ is the interval
\begin{equation}\label{cylinderinterval}
\left(b_{\bf p},b_{\bf p}+2^{-s_r({\bf p})}\right].
\end{equation}
Its diameter is at most $2^{-r}$.  By Lemma \ref{tailosc}, for every fixed $n$ the oscillation of
$A_{{\bf k}(b)}(n)$ on such a cylinder tends to zero uniformly as the prefix length tends to infinity.
If $b_0$ is not dyadic, then, for each prescribed prefix length, all sufficiently close $b$ have the same prefix as ${\bf k}(b_0)$.  The preceding oscillation estimate therefore gives
\[
A_{{\bf k}(b)}(n)\rightarrow A_{{\bf k}(b_0)}(n).
\]
Suppose that $b_0$ is dyadic.  If $b_0=1$, then parameters tending to $1$ share arbitrarily long initial strings of $1$'s with $(1,1,\ldots)$, and Lemma \ref{lim} proves continuity.  We may therefore assume $0<b_0<1$.  Let ${\bf p}=(p_1,\ldots,p_r)$ be its finite terminating index, so that
\[
b_0=\sum_{j=1}^{r}2^{-s_j({\bf p})}.
\]
The chosen nonterminating expansion is
\[
(p_1,\ldots,p_{r-1},p_r+1,1,1,\ldots),
\]
and Proposition \ref{speciallimits}, $(ii)$, identifies its function with $H_{\bf p}$.  Parameters approaching $b_0$ from below share arbitrarily long prefixes of this eventually-one index.  Parameters approaching from above have the form
\[
({\bf p},q,{\boldsymbol\gamma}),
\qquad q\rightarrow+\infty,
\]
with an arbitrary infinite tail ${\boldsymbol\gamma}$.  Put $m=n+1$.  Summing the arbitrary tail by Lemma \ref{lim} gives the quantitative bound
\begin{equation}\label{dyadicuniform}
0\leq A_{({\bf p},q,{\boldsymbol\gamma})}(n)-A_{\bf p}(n)
\leq A_{({\bf p},q-1)}(n)-A_{\bf p}(n)
\leq\sum_{j=2}^{m}j^{1-q}.
\end{equation}
The final finite sum tends to zero with $q$, uniformly in ${\boldsymbol\gamma}$.  Hence no jump occurs at a dyadic point.  We have proved that, for every $n$,
\[
b\mapsto A_{{\bf k}(b)}(n)
\]
is continuous on $(0,1]$.

Since $0\leq A_{{\bf k}(b)}(n)\leq1$, the series \eqref{Hinftyseries} converges uniformly in $b$ on every closed disc $|z|\leq r<1$.  This proves joint continuity there.  If $b_j\rightarrow b_0$, the family $H_{{\bf k}(b_j)}$ is locally uniformly bounded on $\mathcal{D}$ by the estimate in the proof of Proposition \ref{exi}, and it converges on the unit disc.  The Vitali--Porter theorem therefore gives locally uniform convergence on every compact subset of $\mathcal{D}$.

It remains to discuss $z=1$.  Fix $b_0<1/2$.  If $b_0$ is not dyadic, choose a finite prefix common to all sufficiently close parameters.  If its first component is at least $3$, then
\[
A_{{\bf k}(b)}(n)\leq(n+1)^{1-k_1},
\]
and the right-hand side is summable.  If its first component is $2$, choose a position $s$ in the common prefix at which $k_s\geq2$.  Summing every later tail by Lemma \ref{lim} gives the coefficientwise majorant
\[
A_{{\bf k}(b)}(n)
\leq A_{(k_1,\ldots,k_{s-1},k_s-1)}(n),
\]
whose sum is the finite value
$\zeta^{\star}(k_1,\ldots,k_{s-1},k_s-1)$.

If $b_0$ is dyadic, use the finite terminating index ${\bf p}$ above.  On the lower side, a sufficiently long common prefix of its nonterminating representative supplies one of the same majorants.  On the upper side, nearby indices are $({\bf p},q,{\boldsymbol\gamma})$ with $q$ large.  For a fixed $Q\geq2$ and $q\geq Q$, summing the arbitrary tail as in \eqref{dyadicuniform} gives
\[
A_{({\bf p},q,{\boldsymbol\gamma})}(n)
\leq A_{({\bf p},q-1)}(n)
\leq A_{({\bf p},Q-1)}(n).
\]
The last sequence is summable because it is attached to a convergent finite index, while \eqref{dyadicuniform} shows coefficientwise convergence to $A_{\bf p}(n)$.  Thus a common summable majorant exists on both sides.  Dominated convergence, now allowing $z\rightarrow1$ on the unit circle as well as $b\rightarrow b_0$, proves the last assertion.
\end{proof}

Define
\[
X=\overline{\mathcal{U}}\setminus
\left(\{0\}\cup\left[\frac12,1\right]\right).
\]
By Proposition \ref{binarycoordinate}, the map
\begin{equation}\label{taudef}
\tau_{\mathcal{S}}:\widetilde{\mathcal{ST}}\rightarrow X,
\qquad
\tau_{\mathcal{S}}(z,{\bf k})=z\beta({\bf k})
\end{equation}
is bijective.  With the topologies specified in Theorem \ref{one}, $\tau_{\mathcal{S}}$ is the usual polar-coordinate homeomorphism onto $X$.  For $u\in X$, write
\[
b=|u|,
\qquad z=\frac{u}{|u|},
\qquad {\bf k}={\bf k}(b),
\]
and define
\begin{equation}\label{Phidef}
\Phi_{\mathcal{S}}(u)=H_{{\bf k}(b)}(z).
\end{equation}
Thus
\[
\Phi_{\mathcal{S}}=\eta_{\mathcal{S}}\circ\tau_{\mathcal{S}}^{-1}.
\]
Proposition \ref{indexcontinuity} shows that $\Phi_{\mathcal{S}}$ is continuous.

\begin{lem}\label{halfplaneimage}
The map $\Phi_{\mathcal{S}}$ takes values in $\mathcal{H}\setminus\{1\}$.  More precisely, if $|u|<1$, then
\[
\mathrm{Re}\,\Phi_{\mathcal{S}}(u)>\frac12,
\]
whereas if $|u|=1$, then
\[
\mathrm{Re}\,\Phi_{\mathcal{S}}(u)=\frac12.
\]
\end{lem}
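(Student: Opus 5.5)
The plan is to use the Hausdorff--Stieltjes representation of $H_{\bf k}$ from \eqref{infiniteStieltjes} together with the elementary identity
\[
\mathrm{Re}\,\frac{1}{1-w}=\frac{1-\mathrm{Re}\,w}{|1-w|^2},
\qquad\text{which gives}\qquad
\mathrm{Re}\,\frac{1}{1-w}-\frac12=\frac{1-|w|^2}{2|1-w|^2}.
\]
Write $u=bz$ with $b=\beta({\bf k})$ and $|z|=1$. For $z\neq1$, the function $H_{\bf k}(z)$ is given on $\Lambda$ by
\[
H_{\bf k}(z)=\int_{[0,1]}\frac{d\mu_{\bf k}(t)}{1-tz},
\]
so
\[
\mathrm{Re}\,H_{\bf k}(z)-\frac12=\int_{[0,1]}\frac{1-t^2}{2|1-tz|^2}\,d\mu_{\bf k}(t)\geq 0.
\]
Equality holds exactly when $\mu_{\bf k}=\delta_1$. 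The case $z=1$ (allowed only for $b<\tfrac12$) is handled separately: there $H_{\bf k}(1)>1$ by Proposition \ref{exi}, so $\mathrm{Re}>\tfrac12$ and the value is not $1$. This is consistent with $|u|<1$ in that case.

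Next I identify when $\mu_{\bf k}=\delta_1$. This happens iff $A_{\bf k}(n)=1$ for all $n$, and in particular iff $A_{\bf k}(1)=\beta({\bf k})=1$. By Proposition \ref{binarycoordinate}, $\beta({\bf k})=1$ holds only for ${\bf k}=(1,1,\ldots)$. Conversely, for that index Proposition \ref{speciallimits}$(i)$ gives $H_{\bf k}(z)=1/(1-z)$, that is, $\mu_{\bf k}=\delta_1$. If $b<1$, then $\mu_{\bf k}\neq\delta_1$, so $\mu_{\bf k}$ charges $[0,1)$ and the integrand is strictly positive there. This gives $\mathrm{Re}\,\Phi_{\mathcal S}(u)>\tfrac12$ whenever $|u|<1$. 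If $|u|=1$, then $b=1$ and $\Phi_{\mathcal S}(u)=1/(1-z)$, whose real part is exactly $\tfrac12$ by the formula in the introduction.

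It remains to show that the value $1$ is never attained. On $|u|=1$ this is clear, since the real part there is $\tfrac12\neq1$. For $|u|<1$ with $z\neq1$, I use Theorem \ref{infiniteseparation} indirectly, or more simply write $H_{\bf k}(z)=1+F_{\bf k}(z)$. By \eqref{Fhatinfinite}, $F_{\bf k}$ has its only zero in $\Lambda$ at $0$, and $0\notin S^1$, so $F_{\bf k}(z)\neq0$ and $H_{\bf k}(z)\neq1$. For $z=1$ the value exceeds $1$.

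The main obstacle is making sure the boundary values on $S^1\setminus\{1\}$ coincide with the Stieltjes integral for infinite indices. This holds because $S^1\setminus\{1\}\subset\Lambda$ and $\Phi_{\mathcal S}$ is defined by the holomorphic extension from Proposition \ref{exi} and \eqref{infiniteStieltjes}. I also need to rule out an atom of $\mu_{\bf k}$ at $1$ producing a degenerate case when $b<1$; this is settled by the computation above, since any mass on $[0,1)$ suffices for strict inequality.
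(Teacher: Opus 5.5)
Your proposal is correct and follows essentially the same route as the paper: you integrate the kernel identity $\mathrm{Re}\,(1-tz)^{-1}-\tfrac12=(1-t^2)/(2|1-tz|^2)$ against $\mu_{\bf k}$ and identify the equality case $\mu_{\bf k}=\delta_1$ with $\beta({\bf k})=1$, i.e.\ ${\bf k}=(1,1,\ldots)$. You also treat $z=1$ separately via $H_{\bf k}(1)>1$ and exclude the value $1$ using the zero set of $F_{\bf k}$ from \eqref{Fhatinfinite}, exactly as the paper does; the only cosmetic difference is that you obtain $\mu_{\bf k}=\delta_1$ for the all-ones index from Proposition~\ref{speciallimits}$(i)$ rather than from the fact that a probability measure on $[0,1]$ with first moment $1$ must be $\delta_1$.
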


\begin{proof}
Let $z\in S^1\setminus\{1\}$.  From \eqref{infiniteStieltjes},
\begin{equation}\label{halfplanekernel}
\mathrm{Re}\frac1{1-tz}-\frac12
=\frac{1-t^2}{2|1-tz|^2}\geq0.
\end{equation}
Equality after integration is possible only when $\mu_{\bf k}=\delta_1$.  Since
\[
\int t\,d\mu_{\bf k}(t)=\beta({\bf k}),
\]
a probability measure on $[0,1]$ has first moment $1$ only when it is $\delta_1$.  Thus equality occurs precisely when $\beta({\bf k})=1$, that is, when ${\bf k}=(1,1,\ldots)$.  If $z=1$ is allowed, then
$H_{\bf k}(1)>1$.  This proves the two half-plane assertions. Finally, $H_{\bf k}(z)=1$ would imply $F_{\bf k}(z)=0$.  Equation \eqref{Fhatinfinite} shows that the only zero of $F_{\bf k}$ in $\Lambda$ is $0$, which is not on the unit circle.  At $z=1$ the value is greater than $1$.  Thus $1$ is never attained.
\end{proof}

\begin{prop}\label{Phiinjective}
The map $\Phi_{\mathcal{S}}:X\rightarrow\mathcal{H}\setminus\{1\}$ is injective.
\end{prop}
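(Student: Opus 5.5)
Suppose $u_1,u_2\in X$ with $\Phi_{\mathcal{S}}(u_1)=\Phi_{\mathcal{S}}(u_2)$. Write $u_j=b_jz_j$ with $b_j=|u_j|$, $z_j=u_j/|u_j|\in S^1$, and ${\bf k}_j={\bf k}(b_j)$. By Proposition \ref{binarycoordinate}, $u_1\neq u_2$ means that either $b_1\neq b_2$, so the indices are distinct, or $b_1=b_2$ and $z_1\neq z_2$. We split according to whether $z_1$ or $z_2$ equals $1$.

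\emph{Case 1: $z_1,z_2\neq 1$.} If ${\bf k}_1\neq{\bf k}_2$, the first assertion of Theorem \ref{infiniteseparation} gives $H_{{\bf k}_1}(z_1)\neq H_{{\bf k}_2}(z_2)$. If ${\bf k}_1={\bf k}_2$ and $z_1\neq z_2$, the second assertion of Theorem \ref{infiniteseparation} applies. This is univalence on $\mathcal{D}\supset S^1\setminus\{1\}$, which also covers the index $(1,1,\ldots)$ with $b=1$.

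\emph{Case 2: $z_1=z_2=1$.} Then $b_1,b_2<1/2$, so ${\bf k}_1,{\bf k}_2\in\mathcal{T}$. The values are $\eta({\bf k}_1)$ and $\eta({\bf k}_2)$, and these are distinct by the bijectivity of the zeta-star correspondence (Theorem 1.3 of \cite{lit}).

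\emph{Case 3: $z_1=1\neq z_2$.} Here $\Phi_{\mathcal{S}}(u_1)=H_{{\bf k}_1}(1)$ is real and greater than $1$ by Proposition \ref{exi}. We must show that $H_{{\bf k}_2}(z_2)$ is never such a real number.
\begin{itemize}
\item If $z_2$ is nonreal, then by \eqref{Finfiniteint} and the argument of \eqref{ImFpositive} with $\mu_{{\bf k}_2}$, we get $\mathrm{Im}\,H_{{\bf k}_2}(z_2)\neq0$. This uses that $\mu_{{\bf k}_2}$ has positive mass on $(0,1]$, since $\beta({\bf k}_2)>0$.
\item If $z_2=-1$, then $H_{{\bf k}_2}(-1)=\int(1+t)^{-1}d\mu_{{\bf k}_2}(t)\leq 1$, so it cannot equal a value greater than $1$.
\end{itemize}
Hence the values differ.

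The main obstacle is only bookkeeping. Theorem \ref{infiniteseparation} is stated for distinct infinite indices, including those in $\widehat{\mathcal{T}}\setminus\mathcal{T}$, and it is proved through the order $\succ$ via Proposition \ref{infinitecross}. That order is total on distinct indices, so the hypothesis "after interchanging" is always available. One should also check that on the unit circle $\Phi_{\mathcal{S}}$ uses the nonterminating index chosen by Proposition \ref{binarycoordinate}, so that distinct points of $X$ really give distinct pairs $(z,{\bf k})$. No dyadic ambiguity arises, because $\beta$ is a bijection onto $(0,1]$.
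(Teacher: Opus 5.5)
Your proposal is correct and follows essentially the same case analysis as the paper. When both circle parameters differ from $1$, you use Theorem \ref{infiniteseparation} for distinct indices and univalence for equal ones; when both equal $1$, you use the zeta-star bijection of \cite{lit}; and in the mixed case you use the nonvanishing imaginary part at nonreal points together with $H_{\bf k}(-1)\leq 1$, where the paper uses the slightly sharper range $[\tfrac12,1)$, which is not needed.
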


\begin{proof}
Suppose first that the two unit-circle parameters are different from $1$.  If the indices are different, Theorem \ref{infiniteseparation} separates the values.  If the indices are equal, univalence of $H_{\bf k}$ on $\mathcal{D}$ separates the two points.
If one parameter equals $1$, its value is a real number greater than $1$.  At every nonreal point of the unit circle the imaginary part of $H_{\bf k}$ is nonzero, by \eqref{Finfiniteint}.  At $z=-1$,
\[
H_{\bf k}(-1)=\int_{[0,1]}\frac{d\mu_{\bf k}(t)}{1+t}
\in\left[\frac12,1\right),
\]
so equality is again impossible.  If both parameters equal $1$, injectivity is exactly Theorem 1.3 of \cite{lit} (equivalently, the corresponding result of \cite{hmo}).
\end{proof}

\begin{prop}\label{Phiproper}
The map
\[
\Phi_{\mathcal{S}}:X\rightarrow\mathcal{H}\setminus\{1\}
\]
is proper.
\end{prop}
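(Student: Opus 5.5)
To prove properness, take a compact set $K\subset\mathcal{H}\setminus\{1\}$ and a sequence $u_j\in X$ with $\Phi_{\mathcal{S}}(u_j)\in K$. We must show that $(u_j)$ has a subsequence converging in $X$. Since $\overline{\mathcal{U}}$ is compact, pass to a subsequence with $u_j\to u_0\in\overline{\mathcal{U}}$. By continuity of $\Phi_{\mathcal{S}}$ (Proposition \ref{indexcontinuity}), it then suffices to rule out $u_0\notin X$, that is, to exclude $u_0=0$ and $u_0\in[\frac12,1]$. In each excluded case we show that $\Phi_{\mathcal{S}}(u_j)$ leaves every compact subset of $\mathcal{H}\setminus\{1\}$, because it tends to $1$ or to $\infty$.

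\emph{Case $u_0=0$.} Write $b_j=|u_j|\to0$ and $z_j=u_j/b_j$. Then $F_{{\bf k}(b_j)}(z_j)=z_j\int t\,d\mu/(1-tz_j)$, and we aim for the bound $|H_{{\bf k}(b_j)}(z_j)-1|\to0$. When $z_j$ stays away from $1$, Chebyshev's inequality with first moment $\beta=b_j\to0$ gives $\mu([\delta,1])\le b_j/\delta$. The part of the integral over $[0,\delta]$ is then $O(\delta)$, and the part over $[\delta,1]$ is at most $\mu([\delta,1])\sup_t|1-tz_j|^{-1}$. The difficulty is that $z_j$ may approach $1$, where the kernel blows up. For this we use the majorant from Proposition \ref{indexcontinuity}. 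Small $b$ means $k_1$ is large, because $\beta\le 2^{1-k_1}$. Then $|F_{\bf k}(z)|\le\sum_{n\ge1}A_{\bf k}(n)\le\sum_{n\ge1}(n+1)^{1-k_1}=\zeta(k_1-1)-1\to0$, uniformly on the closed disc including $z=1$. This crude coefficient bound settles the case directly: $\Phi_{\mathcal{S}}(u_j)\to1\notin K$.

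\emph{Case $u_0=b_0\in[\frac12,1]$.} Here $z_j\to1$ and $b_j\to b_0$, and we claim $|\Phi_{\mathcal{S}}(u_j)|\to\infty$. The key lower bound is $\mathrm{Re}\,H_{\bf k}(z)\ge$ the contribution of the partial coefficients. More precisely, the aim is to show $\mathrm{Re}\,H_{{\bf k}(b_j)}(z_j)$ or $|H|$ is unbounded by comparison with Lemma \ref{tailosc}-type lower bounds. For $b_0>\frac12$ we have $k_1=1$ for nearby parameters, so $A_{\bf k}(n)\ge A_{(1)}(n)=1/(n+1)$. For $b_0=\frac12$, the indices share long prefixes $(2,1,\ldots,1)$ or have the form $(1,\ldots)$, so $A_{\bf k}(n)\ge A_{(2,\{1\}^r)}(n)$. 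In the Stieltjes form, $\mathrm{Re}\,(1-tz)^{-1}\ge\frac12$ together with $\mathrm{Re}\,H-\frac12=\int\frac{1-t^2}{2|1-tz|^2}d\mu$ shows that it suffices to prove that $\mu_{{\bf k}(b_j)}$ puts enough mass near $t=1$ while $|1-tz_j|$ is small.

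A cleaner route for this case is lower semicontinuity through monotone coefficients. For $0<x<1$, positivity gives $H_{\bf k}(x)\ge\sum_{n\le N}A_{\bf k}(n)x^n$, which is large. To pass from real $x$ to $z_j$ on the circle, use the Stieltjes representation: $\mathrm{Re}\frac{1}{1-tz}$ for $|z|=1$ equals $\frac12+\frac{1-t^2}{2|1-tz|^2}$. When $t$ is close to $1$ with $1-t\gtrsim|1-z|$, this is $\gtrsim(1-t)^{-1}$. Hence $\mathrm{Re}\,H\gtrsim\int_{t\le1-|1-z_j|}(1-t)^{-1}d\mu$. By weak convergence, coming from continuity of the moments in $b$, this is bounded below by the same quantity for the limiting measure $\mu_{{\bf k}(b_0)}$. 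That limiting quantity diverges as $|1-z_j|\to0$ exactly because $H_{{\bf k}(b_0)}(x)\to\infty$ as $x\to1^-$, which holds since the limit index is $(1,\dots)$ or $(2,\{1\}^\infty)$ and its $\zeta^\star$ diverges (Proposition \ref{binarycoordinate} and the remark after Proposition \ref{exi}).

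The main obstacle is making this uniform. The divergence comes from mass near $t=1$, and weak convergence only controls $\mu_j([0,1-\delta])$-type integrals of continuous functions. We therefore truncate with a continuous cutoff $\chi_\delta$ supported in $[0,1-\delta]$. Given $M$, choose $\delta$ so that $\int\chi_\delta(t)(1-t)^{-1}d\mu_{{\bf k}(b_0)}>2M$. Then for large $j$, once $|1-z_j|<\delta$, we obtain $\mathrm{Re}\,H_{{\bf k}(b_j)}(z_j)\ge M$ up to a constant. This contradicts boundedness on $K$, so $u_0\in X$ and properness follows.
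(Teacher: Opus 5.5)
Your case $u_0=0$ is the paper's argument. The inequality you actually need there is $\beta({\bf k})\geq 2^{-k_1}$, the first term of the series, not $\beta\leq 2^{1-k_1}$.

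In the second case, your real-part argument is sound for $b_0\in[\frac12,1)$. There $\mu_{{\bf k}(b_0)}$ has no atom at $t=1$ and $\int_{[0,1)}(1-t)^{-1}\,d\mu_{{\bf k}(b_0)}=\sum_nA_{{\bf k}(b_0)}(n)=+\infty$, so the continuous cutoff plus weak convergence works. This is more elementary than the paper's route through Propositions \ref{infiniteP} and \ref{infinitecross}.

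\textbf{The gap.} The argument fails at the endpoint $b_0=1$, i.e.\ $u_0=1$, which also lies outside $X$ and must be excluded. There ${\bf k}(1)=(1,1,\ldots)$ and $A_{{\bf k}(1)}(n)=1$ for all $n$, hence $\mu_{{\bf k}(1)}=\delta_1$. Consequently $\int\chi_\delta(t)(1-t)^{-1}\,d\mu_{{\bf k}(1)}=0$ for every cutoff supported in $[0,1-\delta]$. The divergence of $H_{{\bf k}(1)}(x)$ as $x\to1^-$ comes entirely from the atom at $t=1$. Your truncation never sees that atom, and it contributes nothing to $\mathrm{Re}\,H-\frac12$, since $1-t^2=0$ there.

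No real-part bound can repair this. The points $u_j=e^{i/j}$ lie in $X$ and tend to $1$, yet $\Phi_{\mathcal S}(u_j)=1/(1-u_j)$ has real part exactly $\frac12$ (Lemma \ref{halfplaneimage}). So your final inequality $\mathrm{Re}\,H_{{\bf k}(b_j)}(z_j)\geq M$ is false near $u_0=1$; the blow-up there is carried by the imaginary part.

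\textbf{How to close it.} You need a lower bound on the modulus. The paper gets one in three steps:
\begin{enumerate}
\item It shows $|F_{{\bf k}(b_0)}(e^{i\theta})|\to+\infty$ as $\theta\to0$, by comparison with $(2,\{1\}^{\infty})$ through Proposition \ref{infinitecross}; this is valid for $b_0=1$ too.
\item It transfers a large value at one fixed point $e^{i\theta_0}$ to ${\bf k}(b_j)$ by Proposition \ref{indexcontinuity}.
\item It reaches $z_j$ through the strict decrease of $|F_{{\bf k}(b_j)}|$ along the semicircle (Proposition \ref{infiniteP}).
\end{enumerate}

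Within your own framework, combine real and imaginary parts instead. For $z=e^{\pm i\theta}$ with $0<\theta\leq\pi/2$ and $t\geq\frac12$, one has $|1-tz|\leq(1-t)+\theta$ and $1-t\cos\theta+t\sin\theta\geq\frac1\pi\big((1-t)+\theta\big)$. Hence
\[
|H_{\bf k}(z)|\geq\frac{\mathrm{Re}\,H_{\bf k}(z)+|\mathrm{Im}\,H_{\bf k}(z)|}{\sqrt2}\geq\frac{1}{\sqrt2\,\pi}\int_{[1/2,1]}\frac{d\mu_{\bf k}(t)}{(1-t)+\theta}.
\]
This kernel is continuous up to $t=1$. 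Insert a continuous cutoff equal to $1$ on $[\frac34,1]$ and $0$ on $[0,\frac12]$. Then for $\theta_j\leq\eta$, weak convergence gives
\[
\liminf_j|\Phi_{\mathcal S}(u_j)|\geq\frac{1}{\sqrt2\,\pi}\int_{[3/4,1]}\frac{d\mu_{{\bf k}(b_0)}(t)}{(1-t)+\eta}.
\]
The right side tends to $+\infty$ as $\eta\to0$ for every $b_0\in[\frac12,1]$, atom included. This follows because $\int_{[0,1]}(1-t)^{-1}\,d\mu_{{\bf k}(b_0)}=\sum_nA_{{\bf k}(b_0)}(n)\geq\sum_n\frac1{n+1}$. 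The case $z_j=1$ is covered by $H(1)=\int(1-t)^{-1}\,d\mu$.
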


\begin{proof}
Let $u_j\in X$ approach the complement of $X$ in the closed unit disc.  We show that $\Phi_{\mathcal{S}}(u_j)$ cannot remain in a compact subset of
$\mathcal{H}\setminus\{1\}$. First suppose that $u_j\rightarrow0$.  Put $b_j=|u_j|$ and let $k_{1,j}$ be the first component of ${\bf k}(b_j)$.  Since
\[
2^{-k_{1,j}}\leq b_j\leq 2^{1-k_{1,j}},
\]
we have $k_{1,j}\rightarrow+\infty$.  Lemma \ref{lim} gives, for $m=n+1$,
\[
A_{{\bf k}(b_j)}(n)\leq m^{1-k_{1,j}}.
\]
Consequently, for all sufficiently large $j$ and uniformly for $|z|=1$,
\[
|F_{{\bf k}(b_j)}(z)|
\leq\sum_{m=2}^{+\infty}m^{1-k_{1,j}}
=\zeta(k_{1,j}-1)-1
\rightarrow0.
\]
Hence $\Phi(u_j)\rightarrow1$, which leaves every compact subset of the punctured target.

Now suppose that $u_j\rightarrow b_0$ for some $b_0\in[1/2,1]$.  Then $b_j=|u_j|\rightarrow b_0$ and the unit-circle factors
$z_j=u_j/|u_j|$ tend to $1$.  Let ${\bf k}_0={\bf k}(b_0)$, and put
${\bf e}=(2,\{1\}^{\infty})$.  Proposition \ref{speciallimits}, $(ii)$, gives
\[
H_{\bf e}(z)=H_{(1)}(z)=-\frac{\log(1-z)}z.
\]
Thus $|F_{\bf e}(e^{i\theta})|\rightarrow+\infty$ as
$\theta\rightarrow0$.  If $b_0>1/2$, then the first component of ${\bf k}_0$ is $1$, so
${\bf k}_0\succ{\bf e}$ and Proposition \ref{infinitecross} gives
\[
|F_{{\bf k}_0}(e^{i\theta})|>|F_{\bf e}(e^{i\theta})|.
\]
For $b_0=1/2$ the two functions are equal.  Therefore, in every case,
\begin{equation}\label{boundaryblowup}
|F_{{\bf k}_0}(e^{i\theta})|\rightarrow+\infty
\qquad(\theta\rightarrow0).
\end{equation}
Let $M>0$.  Choose $0<\theta_0<\pi$ so that the modulus in
\eqref{boundaryblowup} is greater than $2M$.  Proposition \ref{indexcontinuity}, applied at the fixed point $e^{i\theta_0}$, gives
\[
|F_{{\bf k}(b_j)}(e^{i\theta_0})|>M
\]
for all sufficiently large $j$.  If $\vartheta_j\in[0,\pi]$ is the absolute angular distance of $z_j$ from $1$, then
$\vartheta_j\rightarrow0$.  By Proposition \ref{infiniteP}, the modulus is strictly decreasing on the upper semicircle; by conjugation the same comparison holds below it.  Hence, for large $j$,
\[
|F_{{\bf k}(b_j)}(z_j)|
\geq |F_{{\bf k}(b_j)}(e^{i\theta_0})|>M.
\]
When $z_j=1$, the inequality follows by taking the limit from the upper semicircle.  Hence $|F_{{\bf k}(b_j)}(z_j)|\rightarrow+\infty$, and the triangle inequality
\[
|\Phi(u_j)|=|1+F_{{\bf k}(b_j)}(z_j)|
\geq |F_{{\bf k}(b_j)}(z_j)|-1
\]
shows that $|\Phi(u_j)|\rightarrow+\infty$ along the deleted slit.

To conclude, let $K$ be compact in $\mathcal{H}\setminus\{1\}$ and let $(u_j)$ be a sequence in $\Phi^{-1}(K)$.  A subsequence converges in the closed unit disc.  Its limit cannot be $0$ or a point of $[1/2,1]$ by the two alternatives just proved; hence the limit lies in $X$.  Continuity of $\Phi$ then keeps the limit in $\Phi^{-1}(K)$.  Thus $\Phi^{-1}(K)$ is sequentially compact, and therefore compact because $X$ is metrizable.  Hence $\Phi$ is proper.
\end{proof}

\begin{proof}[Proof of Theorem \ref{one}]
Let
\[
X^{o}=\bigg{\{}u\in\mathbb{C}\,\bigg|\,0<|u|<1\bigg{\}}
\setminus\left[\frac12,1\right),
\qquad
Y^{o}=\mathcal{H}^{o}\setminus\{1\}.
\]
These are open subsets of the plane, and $Y^{o}$ is connected.  By Lemma \ref{halfplaneimage} and Proposition \ref{Phiinjective}, the restriction
\[
\Phi_{\mathcal{S}}:X^{o}\rightarrow Y^{o}
\]
is continuous and injective.  It is also proper: if $K\subset Y^{o}$ is compact, then Proposition \ref{Phiproper} makes $\Phi^{-1}(K)$ compact in $X$, and Lemma \ref{halfplaneimage} shows that this preimage cannot meet $|u|=1$, so it lies in $X^{o}$.  The invariance of domain theorem (Theorem $62.3$ in \cite{munkres}) implies that $\Phi_{\mathcal{S}}(X^{o})$ is open in $Y^{o}$.  A proper continuous map between locally compact Hausdorff spaces is closed, so the image is also closed in $Y^{o}$.  Since the image is nonempty and $Y^{o}$ is connected,
\[
\Phi_{\mathcal{S}}(X^{o})=Y^{o}.
\]

It remains to identify the boundary.  If $|u|=1$, then
$\beta({\bf k})=1$ and ${\bf k}=(1,1,\ldots)$.  Proposition \ref{speciallimits}, $(i)$, gives
\[
\Phi_{\mathcal{S}}(u)=\frac1{1-u}.
\]
For $u=e^{i\theta}\neq1$,
\begin{equation}\label{cotformula}
\frac1{1-e^{i\theta}}
=\frac12+\frac{i}{2}\cot\frac{\theta}{2}.
\end{equation}
As $u$ runs once around $S^1\setminus\{1\}$, the right-hand side runs bijectively over the line
$\mathrm{Re}\,w=1/2$.  We have therefore proved that
\[
\Phi_{\mathcal{S}}:X\rightarrow\mathcal{H}\setminus\{1\}
\]
is bijective.  It is continuous and proper, and both spaces are locally compact Hausdorff spaces.  A proper continuous bijection between such spaces is a homeomorphism.  Recalling \eqref{taudef} and \eqref{Phidef} proves every assertion of Theorem \ref{one}.
\end{proof}

\begin{proof}[Proof of Corollary \ref{minusone}]
For every infinite index,
\[
H_{\bf k}(-1)=\int_{[0,1]}\frac{d\mu_{\bf k}(t)}{1+t}
\in\left[\frac12,1\right).
\]
The lower endpoint occurs exactly for $\mu_{\bf k}=\delta_1$, that is, for ${\bf k}=(1,1,\ldots)$.  The value $1$ would require $\mu_{\bf k}=\delta_0$, but this is impossible because its first moment is $\beta({\bf k})>0$.  Theorem \ref{one} shows that every real point in this interval has a unique preimage.  A nonreal unit-circle parameter gives a nonreal value, and the parameter $z=1$ gives a value greater than $1$.  Hence the unique preimage must have $z=-1$.  This proves the stated bijection.
\end{proof}

\section{Cyclotomic density and derived sets}\label{sec:cyclotomic}
We finish with the approximation consequence of the correspondence.

\begin{proof}[Proof of Theorem \ref{cyc}]
The set $\mathcal{C}^{\star}$ is countable.  We first show that it is contained in $\mathcal{H}^{o}$.  For $z\in S^1\setminus\{1\}$ and a finite index ${\bf k}$, equations \eqref{StieltjesH} and \eqref{halfplanekernel} give
\[
\mathrm{Re}\,H_{\bf k}(z)>\frac12,
\]
because the density $\sigma_{\bf k}$ is positive on $(0,1)$ and is not concentrated at $1$.  At $z=1$, the convergence condition in the definition of $\mathcal{C}^{\star}$ gives $k_1\geq2$, and
\[
H_{\bf k}(1)=\zeta^{\star}({\bf k})>1.
\]
Thus $\mathcal{C}^{\star}\subseteq\mathcal{H}^{o}$.

Let $w\in\mathcal{H}^{o}\setminus\{1\}$.  By Theorem \ref{one}, there is a unique pair $(z,{\bf k})\in\widetilde{\mathcal{ST}}$ such that
\[
w=H_{\bf k}(z).
\]
If $z\neq1$, choose roots of unity $z_r\neq1$ tending to $z$ and let
${\bf k}^{(r)}=(k_1,\ldots,k_r)$.  Choose a closed arc $K\subset S^1\setminus\{1\}$ containing $z$ and all sufficiently large $z_r$; then $K$ is a compact subset of $\mathcal{D}$.  Lemma \ref{weakconvergence} and continuity of $H_{\bf k}$ give
\[
\begin{split}
|H_{{\bf k}^{(r)}}(z_r)-H_{\bf k}(z)|
&\leq\sup_{\zeta\in K}|H_{{\bf k}^{(r)}}(\zeta)-H_{\bf k}(\zeta)|
  +|H_{\bf k}(z_r)-H_{\bf k}(z)|\rightarrow0.
\end{split}
\]
Every term on the left belongs to $\mathcal{C}^{\star}$.  If $z=1$, then ${\bf k}\in\mathcal{T}$ and, by the definition of the infinite zeta-star value,
\[
H_{(k_1,\ldots,k_r)}(1)
=\zeta^{\star}(k_1,\ldots,k_r)
\rightarrow H_{\bf k}(1)=w.
\]
Finally, the omitted point $1$ is also a limit point, because
\[
H_{(q)}(-1)=\sum_{m=1}^{+\infty}\frac{(-1)^{m-1}}{m^q}
\rightarrow1
\qquad(q\rightarrow+\infty).
\]
Hence $\mathcal{C}^{\star}$ is dense in $\mathcal{H}^{o}$.

Since $\mathcal{C}^{\star}\subseteq\mathcal{H}^{o}$ and
$\overline{\mathcal{H}^{o}}=\mathcal{H}$, one has
$(\mathcal{C}^{\star})'\subseteq\mathcal{H}$.  Conversely, every neighborhood of a point of $\mathcal{H}$ contains a nonempty open subset $V$ of $\mathcal{H}^{o}$.  The dense set $\mathcal{C}^{\star}$ meets $V$ infinitely often: otherwise, deleting the finitely many points of $V\cap\mathcal{C}^{\star}$ would leave a nonempty open subset of $V$ disjoint from $\mathcal{C}^{\star}$.  Thus every point of $\mathcal{H}$ is an accumulation point, and
\[
(\mathcal{C}^{\star})'=\mathcal{H}.
\]
The closed half-plane has no isolated points, so $\mathcal{H}'=\mathcal{H}$.  Induction gives
\[
(\mathcal{C}^{\star})^{(n)}=\mathcal{H},
\qquad n\geq1.
\]
This proves the theorem.
\end{proof}

\begin{rem}
The proof is constructive in the following sense.  Once the inverse image $(z,{\bf k})$ of a target point has been found, truncating ${\bf k}$ and replacing $z$ by nearby roots of unity produces an explicit sequence of cyclotomic multiple zeta-star values. \end{rem}

\section*{Acknowledgements}

 This project is  supported  by the National Natural Science Foundation of China (Grant No. 12571009) and the Natural Science Foundation of Hunan Province, China (Grant No. 2026JJ40003).     AI tool (ChatGPT 5.6 Sol) is used for language polishing and developing  the strategy of proof  of Theorem \ref{finiteseparation}. All mathematical statements and proofs  are verified by the author.  The author takes full responsibility for the paper.

\end{document}